\documentclass[11pt,reqno]{amsart}

\usepackage[T1]{fontenc}
\usepackage{lmodern}
\usepackage{microtype}
\usepackage{mathtools}
\usepackage{amssymb}
\usepackage{mathrsfs}
\usepackage{enumitem}
\usepackage{xcolor}
\usepackage[font=small,labelfont=sc,labelsep=period,
  justification=justified,singlelinecheck=false]{caption}
\usepackage{tikz}
\usetikzlibrary{arrows.meta,backgrounds,calc,decorations.pathreplacing}
\usepackage[colorlinks=true,linkcolor=blue!45!black,citecolor=blue!45!black,
  urlcolor=blue!45!black]{hyperref}

\setlist[enumerate]{itemsep=3pt,topsep=5pt,parsep=0pt,partopsep=0pt}
\newtheorem{theorem}{Theorem}[section]
\newtheorem{proposition}[theorem]{Proposition}
\newtheorem{lemma}[theorem]{Lemma}
\newtheorem{corollary}[theorem]{Corollary}
\newtheorem{conjecture}[theorem]{Conjecture}
\theoremstyle{remark}
\newtheorem{remark}[theorem]{Remark}

\newcommand{\C}{\mathbb{C}}
\newcommand{\R}{\mathbb{R}}
\newcommand{\Hh}{\mathbb{H}}
\newcommand{\ii}{\mathrm{i}}
\newcommand{\dd}{\mathrm{d}}
\newcommand{\Disc}{\operatorname{disc}}
\newcommand{\Rea}{\operatorname{Re}}
\newcommand{\Ima}{\operatorname{Im}}
\newcommand{\CBF}{\mathcal{CBF}}

\hypersetup{
  pdftitle={Complete Bernstein Functions and Scaled Ultraspherical Zeros},
  pdfauthor={K. Castillo},
  pdfsubject={Complete monotonicity and Pick-function methods for
  ultraspherical zeros},
  pdfkeywords={Gegenbauer polynomials, ultraspherical zeros,
  complete Bernstein functions, Pick functions, complete monotonicity}
}

\title[Bernstein functions and ultraspherical zeros]
{Complete Bernstein functions and scaled ultraspherical zeros}

\author{K. Castillo}
\address{CMUC, Department of Mathematics, University of Coimbra,
3000-143 Coimbra, Portugal}
\email{math@keniercastillo.com}

\subjclass[2020]{Primary 33C45; Secondary 26A48, 30C55, 30E20}
\keywords{Ultraspherical polynomials, Gegenbauer polynomials, zeros,
complete monotonicity, complete Bernstein functions, Pick functions}

\begin{document}

\begin{abstract}
Let $z_{n,j}(\lambda)$ denote the positive zeros, in decreasing
order, of the ultraspherical polynomial $C_n^\lambda$,
$\lambda>-1/2$, with the reduced limiting interpretation at
$\lambda=0$ specified below. Our principal result settles three
higher-monotonicity questions of Gautschi: two as printed and the
natural open-interval form of the third, whose printed endpoint
$\lambda=0$ is singular. For every $n\geq3$,
$$
  \sqrt{\lambda+1}\,z_{n,j}(\lambda)
$$
becomes a complete Bernstein function after translation of its
parameter interval to $(0,\infty)$. For every $n\geq2$,
$$
  \sqrt{\lambda}\,z_{n,j}(\lambda)
$$
is a complete Bernstein function on $(0,\infty)$. For every
$n\geq4$, the largest-zero trajectory
$$
 \sqrt{\lambda+\frac{2n^2+1}{4n+2}}\,z_{n,1}(\lambda)
$$
has the same property, whilst for every other positive zero the
derivative of this scaling fails to be completely monotone.
Separately, an exact calculation in degree $4$ provides a
counterexample to a fourth conjecture of Gautschi, concerning the
linear scaling $\lambda z_{n,j}(\lambda)$.
\end{abstract}

\maketitle

\section{Introduction}

We use the classical Gegenbauer normalisation, characterised by the
generating function
$$
 (1-2xt+t^2)^{-\lambda}
 =\sum_{n=0}^{\infty}C_n^\lambda(x)t^n,
 \quad x\in[-1,1],\quad |t|<1.
$$
For arbitrary complex $x$, the identity is understood as a formal
power series in $t$. Thus $C_n^\lambda$ is the coefficient of $t^n$ in the left-hand
side; see \cite[Eq.~(4.7.23), p.~82]{Szego}. In this normalisation,
we use the rising factorial
$$
 (a)_0:=1,\quad
 (a)_r:=a(a+1)\cdots(a+r-1),\quad r=1,2,\ldots.
$$
Then
$$
 C_n^\lambda(1)=\frac{(2\lambda)_n}{n!}.
$$
There is a normalisation point which must be dealt with at the outset.
For $n\geq1$, the polynomial $C_n^\lambda$ contains the common
factor $\lambda$, and hence $C_n^0$ is identically zero. We define
its reduced value at $\lambda=0$ by
$$
 \mathcal P_n^0(x):=
 \lim_{\lambda\to0}\frac{C_n^\lambda(x)}{\lambda}.
$$
It is the nonzero multiple $2T_n(x)/n$ of the Chebyshev polynomial
characterised by $T_n(\cos\theta)=\cos(n\theta)$; the coefficient
calculation is given in Section~\ref{sec:h}. Throughout the paper, a
zero at $\lambda=0$ means a zero of $\mathcal P_n^0$. This
convention is forced by the generating-function normalisation and
agrees with continuation from $\lambda\neq0$.

For $\lambda>-1/2$, $\lambda\neq0$, the Gegenbauer polynomials
form the symmetric Jacobi family and are orthogonal on $(-1,1)$
with respect to the weight $(1-x^2)^{\lambda-1/2}$. Together with
the preceding reduced interpretation at $\lambda=0$, their zeros
are simple and lie in $(-1,1)$; see
\cite[Theorem~3.3.1, pp.~44--46]{Szego}. Fourier--Gegenbauer expansions make
them a standard tool in approximation theory; see
\cite[Chapter~IV, pp.~58--99, and Chapter~IX,
pp.~244--273]{Szego}.

The positive zeros will be denoted in decreasing order by
$$
 z_{n,1}(\lambda)>\cdots>
 z_{n,\lfloor n/2\rfloor}(\lambda)>0.
$$
Here $\lfloor u\rfloor$ and $\lceil u\rceil$ denote,
respectively, the greatest integer not exceeding $u$ and the least
integer not smaller than $u$.

The variation of ultraspherical zeros with the parameter has a long
history. Under its regularity hypotheses, Markov's classical theorem
gives monotonicity of the zeros when
$\partial_\lambda\log w_\lambda(x)$ is monotone as a function of
$x$; it does not directly answer the scaled questions considered here.
See \cite[Theorem~6.12.1, p.~115]{Szego} and \cite{Ismail1987}. During the 1980s
several authors therefore looked for a positive factor $a(\lambda)$
such that
$$
 a(\lambda)z_{n,j}(\lambda)
$$
would increase even though $z_{n,j}(\lambda)$ itself decreases.
Elbert and Laforgia obtained early monotonicity results in
\cite{ElbertLaforgia1986}. Ahmed, Muldoon, and Spigler then proved in
\cite[Theorem~2]{Ahmed1986} that the factor
$$
 \left(\lambda+\frac{2n^2+1}{4n+2}\right)^{1/2}
$$
works for $-1/2<\lambda\leq3/2$, but did not yet obtain the full
interval $\lambda>-1/2$.

The problem was sharpened in the Problem Section of the 1988 Segovia
proceedings edited by Alfaro, Dehesa, Marcellán, Rubio de Francia,
and Vinuesa: Ismail and Letessier asked whether
$\sqrt{\lambda}\,z_{n,j}(\lambda)$ is increasing for
$\lambda\geq0$; see \cite[pp.~329--330]{IsmailLetessier}. Askey
subsequently proposed the parameter-independent enlargement
$\sqrt{\lambda+1}$. The resulting statement became known as the
Ismail--Letessier--Askey, or ILA, conjecture and attracted a sequence
of partial results during the 1990s; see Gautschi's historical account
\cite[pp.~253--255]{Gautschi2018}. The problem was finally settled in
1999 by Elbert and Siafarikas \cite{ElbertSiafarikas}. Their theorem
was stronger than the original ILA statement: it established the
scaled monotonicity with the $n$-dependent factor above on the full
interval $\lambda>-1/2$, from which the ILA conjecture follows.

Thus, by the end of the 1990s it was known that
$$
 \lambda\longmapsto\sqrt{\lambda+1}\,z_{n,j}(\lambda)
$$
is increasing for $n\geq3$, despite the fact that the unscaled zero
is decreasing. The stronger theorem of Elbert and Siafarikas concerns
the scale
$$
d_n:=\frac{2n^2+1}{4n+2}.
$$
The stronger theorem asserts that
$$
 \lambda\longmapsto\sqrt{\lambda+d_n}\,z_{n,j}(\lambda)
$$
is increasing.

Nearly two decades later, Gautschi returned to this history in
\cite{Gautschi2018}. His high-precision divided-difference experiments
suggested that the known first-derivative inequalities are only the
first members of an infinite sequence of alternating inequalities.
Recall that a $C^\infty$-function $q$ on an open interval is
completely monotone if
$$
 (-1)^m q^{(m)}(\lambda)\geq0,\quad m=0,1,2,\ldots.
$$
The three positive conjectural assertions relevant here are the
following. The first and third can be stated exactly as printed.
Gautschi's second formulation uses $\lambda\geq0$ and describes
the first derivative as completely monotone on $[0,\infty)$.
Section~\ref{sec:h} will show that this derivative tends to
$+\infty$ as $\lambda\downarrow0$, so it is not an ordinary
finite derivative at the endpoint. We therefore state the
mathematically well-defined open-interval form, which contains every
conjectured derivative inequality away from that singular endpoint.

\begin{conjecture}[Gautschi, 2018]\label{conj:f}
For every $n\geq3$ and every positive zero $z_{n,j}$,
$$
 (-1)^m\frac{\dd^{m+1}}{\dd\lambda^{m+1}}
 \left(\sqrt{\lambda+1}\,z_{n,j}(\lambda)\right)>0
$$
for $m=0,1,2,\ldots$ and $\lambda>-1/2$.
See \cite[p.~259]{Gautschi2018}.
\end{conjecture}

\begin{conjecture}[Gautschi, 2018, open-interval form]\label{conj:h}
For every $n\geq2$ and every positive zero $z_{n,j}$,
$$
 (-1)^m\frac{\dd^{m+1}}{\dd\lambda^{m+1}}
 \left(\sqrt{\lambda}\,z_{n,j}(\lambda)\right)>0
$$
for $m=0,1,2,\ldots$ and $\lambda>0$.
This is the restriction to the open interval of the formulation
printed in \cite[p.~263]{Gautschi2018}.
\end{conjecture}

\begin{conjecture}[Gautschi, 2018]\label{conj:g}
Let $n\geq4$. The derivative of
$$
 \sqrt{\lambda+d_n}\,z_{n,j}(\lambda)
$$
is completely monotone for the largest positive zero $j=1$, but is
not completely monotone for any $j\geq2$.
See \cite[p.~263]{Gautschi2018}.
\end{conjecture}

Gautschi also conjectured a more detailed sign pattern for the
derivatives of $\lambda z_{n,j}(\lambda)$. In the case $n=4$, the
part of that conjecture concerning the largest positive zero asserts
$$
 (-1)^m\frac{\dd^{m+1}}{\dd\lambda^{m+1}}
 \bigl(\lambda z_{4,1}(\lambda)\bigr)>0,
 \quad 0\leq m\leq3,\quad \lambda>-1/2.
$$
This is the degree-$4$ portion of the pattern on
\cite[p.~264]{Gautschi2018}. We shall show by an exact calculation
that the assertion already fails for $m=3$.

The purpose of this paper is to prove the first and third
square-root assertions as printed, to establish the open-interval
content of the second, and to disprove the fourth conjecture just
described. The positive statements follow from a stronger
complex-analytic property. A nonnegative function on
$(0,\infty)$ is a complete Bernstein function if it admits the
standard representation
$$
 f(s)=a+bs+\int_{(0,\infty)}\frac{s}{s+t}\,\nu(\dd t),
 \quad a,b\geq0,
$$
with the usual integrability condition on $\nu$. Equivalently, it
extends holomorphically to the slit plane and preserves the upper
half-plane; see
\cite[Theorem~6.2, pp.~69--75, and Theorem~6.9, pp.~78--79]{Schilling}.
We denote the class of complete Bernstein functions by
$$
\CBF:=\{f:(0,\infty)\to[0,\infty): f
 \text{ is a complete Bernstein function}\}.
$$
Here the integrability condition is
$$
 \int_{(0,\infty)}\frac{1}{1+t}\,\nu(\dd t)<\infty.
$$

Our main result is as follows.

\begin{theorem}\label{thm:main}
Let $z_{n,j}$ be the $j$-th positive zero of $C_n^\lambda$.
\begin{enumerate}[label=\textup{(\arabic*)}]
\item For every $n\geq3$ and
 $1\leq j\leq\lfloor n/2\rfloor$, the translated function
$$
 s\longmapsto
 \sqrt{s+\frac12}\,z_{n,j}\left(s-\frac12\right)
$$
belongs to $\CBF$.
\item For every $n\geq2$ and
$1\leq j\leq\lfloor n/2\rfloor$,
$$
 s\longmapsto\sqrt{s}\,z_{n,j}(s)
$$
belongs to $\CBF$.
\item For every $n\geq4$,
$$
 s\longmapsto
 \sqrt{s+d_n-\frac12}\,
 z_{n,1}\left(s-\frac12\right)
$$
belongs to $\CBF$.
\item If $n\geq4$ and $2\leq j\leq\lfloor n/2\rfloor$, then
the derivative of
$\sqrt{\lambda+d_n}\,z_{n,j}(\lambda)$ is not completely
monotone on $(-1/2,\infty)$.
\item The conjectured derivative pattern for
$\lambda z_{n,j}(\lambda)$ is false. In fact,
if
$$
 K(\lambda)=\lambda z_{4,1}(\lambda),
$$
then $K$ extends analytically across $\lambda=-1/2$ and
$$
 K^{(4)}\left(-\frac12\right)=\frac{11}{32}>0.
$$
Consequently $K^{(4)}(\lambda)>0$ at interior points
$\lambda>-1/2$ sufficiently close to $-1/2$, whereas the
conjecture requires this quantity to be negative.
\end{enumerate}
In each of \textup{(1)}--\textup{(3)} the representing measure is
nonzero. Consequently every complete-monotonicity inequality for the
derivative is strict.
\end{theorem}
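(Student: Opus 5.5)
The plan is to reduce everything to Pick-function (Nevanlinna) properties of squared zeros, viewed as functions of the parameter. Write $x=z^2$. Since $C_n^\lambda$ has parity $(-1)^n$, its positive zeros squared, $x_{n,j}(\lambda)=z_{n,j}(\lambda)^2$, are the roots of a polynomial $P_n(x,\lambda)$ of degree $\lfloor n/2\rfloor$ in $x$ whose coefficients are polynomials in $\lambda$. They come from the explicit hypergeometric form $C_n^\lambda(x)\propto x^{n-2\lfloor n/2\rfloor}\,{}_2F_1(-\lfloor n/2\rfloor,\cdot;\cdot;x^{-2})$, after removing the common factor $\lambda$.

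The key reduction rests on the following facts:
\begin{itemize}
\item[(a)] If $g(s)$ is in $\CBF$ and nonzero, then $g(s)^{1/2}$ and $s\,g(s)^{-1}$ are in $\CBF$ (standard stability properties, \cite[Ch.~7]{Schilling}).
\item[(b)] Hence it suffices to show that $F(s)=(s+c)\,x_{n,j}(s-\tfrac12)$ is in $\CBF$, with the appropriate shift $c$. Then $\sqrt{F}$ is the desired function.
\item[(c)] To show $F\in\CBF$ we need two things. First, $F$ continues holomorphically to $\C\setminus(-\infty,0]$. Second, $\Ima F(s)\geq0$ whenever $\Ima s>0$.
\end{itemize}

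For the continuation I will use the implicit function theorem on the algebraic curve $P_n(x,\lambda)=0$. The branch $x_{n,j}$ continues along paths in the slit plane as long as no discriminant zero (a branch point, where $\Disc_x P_n=0$) and no pole (a zero of the leading coefficient in $x$) lies off the negative axis after the translation $s=\lambda+\tfrac12$. Thus I need to locate the zeros of $\Disc_x P_n(\cdot,\lambda)$ as a polynomial in $\lambda$. The expected statement is that they all lie in $\lambda\leq-1/2$, or more precisely in $(-\infty,-1/2]$, which is where orthogonality fails.

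For the sign of $\Ima F$ I will use the argument principle. I will follow the point $w=F(s)$ as $s$ traverses the boundary of the upper half-plane: the positive axis, where $F>0$; the negative slit, approached from above; and a large semicircle. Along the negative slit I will check that $\Ima F\geq0$ from above, using the zeros' reality structure there. On the large semicircle I will use the asymptotics $x_{n,j}(\lambda)\sim h_{n,j}^2/(4\lambda)$, coming from the Hermite limit $\sqrt{\lambda}\,z\to h_{n,j}/\sqrt2$. These give $(s+c)\,x\to h^2/2$ constant, plus lower-order terms whose imaginary part has the right sign. Equivalently, and more robustly, I would write $F$ as an eigenvalue: $x_{n,j}$ are the eigenvalues of a symmetric tridiagonal (Jacobi) matrix $J(\lambda)$ whose entries are rational in $\lambda$. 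Then $(\lambda+c)J(\lambda)$, suitably symmetrised, is a matrix-valued Pick function in $s$, since each entry is of the form $a+bs+\sum \tfrac{s\,m_k}{s+t_k}$ with positive matrix coefficients. Eigenvalues of matrix Pick functions that stay simple on the slit plane are scalar Pick functions. I would try the matrix route first. For $J$, the even/odd reduction of the Gegenbauer Jacobi matrix, whose off-diagonal entries are $\tfrac{k(k+2\lambda-1)}{4(k+\lambda)(k+\lambda-1)}$, gives such rational structure.

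Part (2) has $c=0$ and no shift. Part (1) has $c=1$ with shift, and part (3) has $c=d_n$, restricted to $j=1$. For (3) the constraint $j=1$ should enter through the largest eigenvalue of the matrix Pick function, for which a Rayleigh-quotient (max over unit vectors of a Pick family) argument preserves the Pick property. Nonzero representing measure follows because $F$ is not affine, since it has a finite nonzero limit at infinity.

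For (4) I will argue by contradiction. If $G'$ were completely monotone, then $G=\sqrt{\lambda+d_n}\,z_{n,j}$ would be Bernstein, hence holomorphic in $\Rea s>0$ in the translated variable. I will exhibit either a failure of the sign of $G''$ near $\lambda=-1/2$, by expanding to second order there using the explicit Jacobi matrix, or a branch point with $\Rea s>0$. I expect the local second-derivative computation at $\lambda=-1/2$ to work and to give a sign determined by $j\geq2$.

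For (5), at $\lambda=-1/2$ the polynomial degenerates nicely ($C_4^{-1/2}$ is expressible via $(x^2-1)$ times Legendre-type terms), so $z_{4,1}$ is analytic there. Because $n=4$ gives an explicit quadratic in $x=z^2$, I will expand $K(\lambda)=\lambda\sqrt{x_{4,1}(\lambda)}$ in a Taylor series at $-1/2$ to fourth order and obtain $11/32$.

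The main obstacle is establishing the Pick property: either locating all branch points in $(-\infty,-1/2]$ for every $n$, or verifying the positive-matrix Pick structure of the scaled Jacobi matrix, especially for the $n$-dependent shift $d_n$ in (3).
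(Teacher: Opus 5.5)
There is a genuine gap. Your central reduction (b) replaces each target by a strictly stronger statement, and that statement is false. The class $\CBF$ is closed under $g\mapsto g^{1/2}$ but not under squaring. The functions $\sqrt{\lambda+c}\,z_{n,j}(\lambda)$ map $\Hh$ into $\Hh$, but their squares $(\lambda+c)\,x_{n,j}(\lambda)$ can take arguments in $(\pi,2\pi)$.

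Take $n=4$, $j=1$, where $x_{4,1}(\lambda)=\bigl\{3(\lambda+2)+\sqrt{3(2\lambda+3)(\lambda+2)}\bigr\}/\{2(\lambda+2)(\lambda+3)\}$. Continue from $\lambda>-3/2$ through $\Hh$ to $-2<a<-3/2$. The radical becomes $\ii\sqrt{3|2a+3|(a+2)}$, so $\Ima x_{4,1}(a+\ii0)>0$. But $a+c<0$ for $c=0$, for $c=1$, and, on $(-2,-11/6)$, for $c=d_4=11/6$. Hence $\Ima\{(a+c)x_{4,1}(a+\ii0)\}<0$, and your function $F$ fails to be Pick in each of parts (1)--(3). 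Concretely, $\lambda x_{4,1}(\lambda)\approx-2.92-1.20\ii$ at $\lambda=-1.75+0.1\ii$.

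The same example rules out the other routes you sketched:
\begin{itemize}
\item The matrix route cannot work. If $\Ima M\geq0$ as a matrix, every eigenvalue $\mu$ has $\Ima\mu\geq0$, so no symmetrisation can make $(\lambda+c)$ times your matrix matrix-Pick.
\item The slit cannot be handled ``using the zeros' reality structure''. Beyond the first collision $\lambda_2=-3/2$ the boundary zeros are nonreal.
\item The Rayleigh-quotient idea for $j=1$ has no meaning at complex $s$.
\end{itemize}

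What you are missing is a way to control $\sqrt{\lambda+c}\,z$ itself on the slit. The paper uses parameter hyperbolicity: $(-\ii)^nC_n^\lambda(\ii y)/\lambda$ has only real zeros in $\lambda$ for $y>0$. This gives $\Rea z_{n,j}>0$ on $\Hh$, hence $\Ima\bigl(\sqrt{\lambda+c}\,z\bigr)(a+\ii0)=\sqrt{|a+c|}\,\Rea z(a+\ii0)\geq0$ wherever the scale is imaginary. Where the scale is real you need one of two things:
\begin{itemize}
\item for parts (1) and (2), reality of the branch;
\item for part (3), the orientation lemma. At each collision $\lambda_k=1/2-k$ all local sheets rotate by $\pi/k$, and only the $k$-th sheet (arriving at $x=1$ from below) leaves the closed upper half-plane. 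Hence $\Ima z_{n,1}(a+\ii0)\geq0$ for $a>-\lceil n/2\rceil$.
\end{itemize}
These boundary signs, together with a minimum principle allowing finitely many singular boundary points and the negative $1/\lambda$ Hermite coefficient at infinity, give the Pick property.

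Separately, a finite nonzero limit at infinity does not exclude an affine (constant) function. It is the nonzero $1/\lambda$ coefficient that makes the measure nonzero. This is also why (1) needs $n\geq3$: $\sqrt{\lambda+1}\,z_{2,1}\equiv1/\sqrt2$.

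Your plan for (4) also fails on both alternatives. First, all nonremovable singularities of $G_{n,k}=\sqrt{\lambda+d_n}\,z_{n,k}$ are real and lie at or left of $\lambda_k<-1/2$, so there is no branch point with $\Rea s>0$. Second, the low-order derivatives at $-1/2$ have the completely monotone signs. For example, $G_{4,2}(-\tfrac12+h)^2=\tfrac4{15}\bigl(1+\tfrac{h}{60}-\tfrac{h^2}{150}+O(h^3)\bigr)$, so $G_{4,2}'>0>G_{4,2}''$ there.

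The obstruction appears only at high order and needs two ingredients:
\begin{itemize}
\item the expansion $z_{n,k}(\lambda)=1-\kappa_{n,k}^{1/k}(\lambda-\lambda_k)^{1/k}+O\bigl((\lambda-\lambda_k)^{2/k}\bigr)$ with $\kappa_{n,k}>0$, which identifies $z_{n,k}$ as the sheet reaching $x=1$ from below;
\item a monodromy argument that $\lambda_k$ is the nearest singularity of this sheet from any $\lambda_*>-1/2$, the earlier collisions being regular for it.
\end{itemize}
Then transfer gives $(-1)^rG_{n,k}^{(r)}(\lambda_*)>0$ for all large $r$. Equivalently, if $G_{n,k}'$ were completely monotone, the Taylor series of $t\mapsto G_{n,k}'(\lambda_*-t)$ would have nonnegative coefficients and converge on $|t|<\lambda_*-\lambda_k$. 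That would force $G_{n,k}'\geq0$ on $(\lambda_k,\lambda_*)$, contradicting $G_{n,k}'\to-\infty$ as $\lambda\downarrow\lambda_k$.

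Your treatment of (5) by explicit expansion of the degree-$4$ formula agrees with the paper.
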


The key ingredient is a hyperbolicity statement in the
$\lambda$-variable:
$$
 \frac{(-\ii)^n}{\lambda}C_n^\lambda(\ii y)
\quad\hbox{has only real zeros in $\lambda$, for every $y>0$}.
$$
It implies that every positive zero branch maps the upper
$\lambda$-half-plane into the right $x$-half-plane. This gives the
boundary orientation needed for the Pick argument after multiplication
by the appropriate square root.

The largest zero in the degree-dependent scaling requires one additional
oriented-trajectory argument. At the real discriminant point
$\lambda_k=1/2-k$, $k$ zero branches meet at $x=1$. Their local
directions rotate through $\pi/k$ when the parameter passes above
the branch point. The only local sheet that could rotate out of the
upper half-plane is the $k$-th zero branch, rather than the largest
branch. This determines the required boundary sign.

For the remaining zeros the same branch points have the opposite
effect. The $k$-th branch has a first nonremovable singularity
$$
 z_{n,k}(\lambda)
 =1-\kappa_{n,k}^{1/k}
   (\lambda-\lambda_k)^{1/k}
 +O\bigl((\lambda-\lambda_k)^{2/k}\bigr),
$$
with $\kappa_{n,k}>0$. Singularity analysis then forces the eventual
derivative signs to be the strict opposite of complete monotonicity.

The paper is organised as follows. Section~\ref{sec:algebra} records
the discriminant and the collision factorisation. Section~\ref{sec:hyp}
proves parameter hyperbolicity. The right-half-plane property and a
boundary minimum principle are established in
Sections~\ref{sec:geometry} and~\ref{sec:boundary}. The first and
second square-root scalings are treated in Sections~\ref{sec:f}
and~\ref{sec:h}. The oriented largest-zero argument and the
degree-dependent scaling occupy
Sections~\ref{sec:orientation} and~\ref{sec:gpositive}.
Section~\ref{sec:negative} proves the negative assertion for the
remaining branches, and Section~\ref{sec:linear-counterexample}
gives the counterexample to the fourth conjecture.
Appendix~\ref{app:extensions} develops general square-root scales
and further consequences of the same framework; the declaration
following the appendix identifies the provenance of those developments.

\section{Analytic preliminaries}
\label{sec:tools}

We record the precise analytic results used in the sequel and fix the
conventions concerning continuation of algebraic zero branches,
complete Bernstein functions, and singularity analysis.

\subsection{Holomorphic zero branches}

Suppose that $P(\lambda,x)$ is a polynomial in $x$, with
coefficients holomorphic in $\lambda$, and that
$$
 P(\lambda_0,x_0)=0,\quad
 \frac{\partial P}{\partial x}(\lambda_0,x_0)\neq0.
$$
The holomorphic implicit-function theorem provides neighbourhoods
$U$ of $\lambda_0$ and $V$ of $x_0$, and a unique holomorphic
function $z:U\to V$, such that
$$
 z(\lambda_0)=x_0,\quad P(\lambda,z(\lambda))=0.
$$
See \cite[Theorem~B.4, Appendix~B.5, p.~753]{Flajolet}.
Thus a simple polynomial zero moves holomorphically with the
parameter. A branch can fail to continue in this elementary manner
only when it collides with another zero or escapes to infinity because
the degree drops.

We now state exactly what is meant by the discriminant notation used
below. If
$$
 p(x)=a_N\prod_{\ell=1}^{N}(x-\xi_\ell),\quad a_N\ne0,
$$
then its discriminant with respect to the variable $x$ is
$$
\Disc_x p
 :=a_N^{\,2N-2}\prod_{1\leq r<s\leq N}(\xi_r-\xi_s)^2.
$$
The subscript $x$ is not an additional operation: it records that
the roots are taken in the $x$-variable while $\lambda$ is held
fixed. Equivalently,
$$
 \Disc_xp=(-1)^{N(N-1)/2}a_N^{-1}
 \operatorname{Res}_x(p,\partial_xp).
$$
Consequently $\Disc_xp=0$ if and only if $p$ and
$\partial_xp$ have a common zero, that is, if and only if $p$
has a multiple finite root.

There is a separate phenomenon when the leading coefficient
$a_N(\lambda)$ vanishes. The polynomial then loses degree and some
roots may tend to infinity. We call such a parameter a degree-loss
point. The coefficient discriminant remains defined there, but its
vanishing does not by itself distinguish a multiple finite root from a
loss of degree. After monic normalisation, a genuine degree-loss point
appears as a pole of at least one coefficient, once common scalar
factors have been cancelled. We shall therefore inspect both the zeros
of the monic discriminant and the poles of the monic family.

On a parameter domain on which the degree is constant and the
discriminant is nonzero, every root is simple. We also need to know
that local continuation cannot cease along a compact subarc because a
root escapes to infinity. After monic normalisation, write
$$
 P(\lambda,x)=x^N+a_{N-1}(\lambda)x^{N-1}+\cdots+a_0(\lambda).
$$
The coefficients are bounded on every compact parameter set $K$.
The elementary Cauchy bound then gives, uniformly for
$\lambda\in K$,
$$
 |x|\leq1+\max_{0\leq r<N}|a_r(\lambda)|
$$
for every zero $x$ of $P(\lambda,\,\cdot\,)$. Thus the local
implicit branches continue along every path in the domain. Equivalently,
the zero set projects there as a finite unramified covering.
Continuations along two homotopic paths agree by uniqueness of local
roots. If the domain is simply connected, the monodromy theorem then
makes every continued branch single-valued; see
\cite[Chapter~8, Section~1.6, Theorem~2, pp.~295--296]{Ahlfors}. These observations justify the
discriminant calculation in Section~\ref{sec:algebra}.

\subsection{Pick and complete Bernstein functions}

We denote the upper half-plane by
$$
 \Hh=\{z\in\C:\Ima z>0\}.
$$
A holomorphic function $f:\Hh\to\overline{\Hh}$ is called a Pick,
Nevanlinna, or Herglotz function. We shall use the following
characterisation.

\begin{theorem}[Pick--Bernstein characterisation]\label{thm:Pick-CBF}
Let $f:(0,\infty)\to[0,\infty)$. The following are equivalent.
\begin{enumerate}[label=\textup{(\roman*)}]
\item $f$ is a complete Bernstein function.
\item $f$ has a holomorphic extension to
 $\C\setminus(-\infty,0]$ which maps $\Hh$ into
 $\overline{\Hh}$.
\item There are $a,b\geq0$ and a positive measure $\nu$ such that
$$
 f(s)=a+bs+\int_{(0,\infty)}\frac{s}{s+t}\,\nu(\dd t),
$$
where
$$
 \int_{(0,\infty)}(1+t)^{-1}\,\nu(\dd t)<\infty.
$$
\end{enumerate}
\end{theorem}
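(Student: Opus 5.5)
Since the paper defines $\CBF$ through the integral representation, (i)$\Leftrightarrow$(iii) holds by definition. The content is therefore (iii)$\Rightarrow$(ii) and (ii)$\Rightarrow$(iii), and I would prove these by comparing (iii) with the Nevanlinna representation of Pick functions. This is the classical argument of \cite[Theorem~6.2 and Theorem~6.9]{Schilling}, which I would follow rather than reinvent.

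For (iii)$\Rightarrow$(ii), fix $t>0$.
\begin{itemize}
\item The kernel $s\mapsto s/(s+t)=1-t/(s+t)$ is holomorphic on $\C\setminus(-\infty,0]$.
\item It maps $\Hh$ into $\Hh$, since $\Ima\bigl(-t/(s+t)\bigr)=t\,\Ima s/|s+t|^2>0$.
\item On a compact set $K\subset\C\setminus(-\infty,0]$ we have $|s/(s+t)|\le C_K(1+t)^{-1}(1+t)\cdot\ldots$; more precisely $|s/(s+t)|\le C_K\min(1,1/t)\le 2C_K/(1+t)$ uniformly in $t$.
\end{itemize}
The integrability condition then gives locally uniform convergence of the integral, so it defines a holomorphic function on the slit plane. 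Its imaginary part on $\Hh$ is a nonnegative integral. Adding $a+bs$ with $a,b\ge0$ preserves this, and $f\ge0$ on $(0,\infty)$ is clear.

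For (ii)$\Rightarrow$(iii), I start from the Nevanlinna representation of the Pick function $f|_{\Hh}$:
$$
 f(z)=\alpha+\beta z+\int_{\R}\Bigl(\frac1{t-z}-\frac{t}{1+t^2}\Bigr)\mu(\dd t),
 \quad \alpha\in\R,\ \beta\ge0,\quad \int_{\R}\frac{\mu(\dd t)}{1+t^2}<\infty.
$$
\begin{enumerate}
\item By Stieltjes inversion, $\mu$ is the weak limit of $\pi^{-1}\Ima f(x+\ii\varepsilon)\,\dd x$. Because $f$ extends holomorphically across $(0,\infty)$ and is real there, $\mu((0,\infty))=0$, so $\mu$ lives on $(-\infty,0]$.
\item Substitute $t=-\tau$ with $\tau\ge0$. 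For real $s>0$ this gives
 $$f'(s)=\beta+\int\mu(\dd t)/(t-s)^2>0,$$
 so $f$ is nondecreasing on $(0,\infty)$. Since $f\ge0$, the limit $a:=f(0+)\in[0,\infty)$ exists.
\item An atom $c=\mu(\{0\})>0$ would contribute the term $-c/s$, which tends to $-\infty$ as $s\downarrow0$. The rest of $f$ minus this term is still nondecreasing, hence bounded above near $0^+$, so $f(0+)$ would be $-\infty$, contradicting $f\ge0$. Thus $\mu(\{0\})=0$.
\item I then rewrite, for $\tau>0$,
 $$\frac1{-\tau-s}+\frac{\tau}{1+\tau^2}=\Bigl(\frac{s}{s+\tau}-\frac{1}{1+\tau^2}\Bigr)\frac1\tau+\text{const}(\tau).$$
 I define $\nu(\dd\tau)=\tau^{-1}\mu(\dd(-\tau))$, which gives $f(s)=a'+\beta s+\int s/(s+\tau)\,\nu(\dd\tau)$.
\item Letting $s\downarrow0$ and using monotone convergence (the integrand $s/(s+\tau)$ decreases to $0$) identifies $a'=f(0+)=a\ge0$.
\end{enumerate}

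The step I expect to be the main obstacle is making the regrouping in (4) legitimate. Each piece must be separately integrable, which needs
$$
 \int_{(0,\infty)}\frac{\nu(\dd\tau)}{1+\tau}<\infty.
$$
At $\infty$ this follows from $\int\mu(\dd t)/(1+t^2)<\infty$, since $\tau^{-1}(1+\tau)^{-1}\asymp(1+\tau^2)^{-1}$ there. Near $\tau=0$ it is not automatic. I would obtain it from the finiteness of $f(1)-f(0+)$: by monotone convergence this difference equals $\beta+\int \tau^{-1}\bigl(1/(1+\tau)\bigr)\cdots$, and it dominates $\int_{(0,1]}\nu(\dd\tau)/2$. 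Once that bound is secured, identifying $a$ as $f(0+)$ is straightforward.
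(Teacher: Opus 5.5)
Your proof is correct, and it is the standard Nevanlinna-representation argument behind the theorems of Schilling, Song and Vondra\v{c}ek that the paper simply cites without proof. The paper adds only one observation, which is also your step 2: a nonnegative Pick function on $(0,\infty)$ is nondecreasing, so $f(0+)=\inf_{s>0}f(s)\in[0,\infty)$ exists automatically.

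Your monotone-convergence treatment of $f(1)-f(0+)$ correctly gives the full condition $\int_{(0,\infty)}(1+\tau)^{-1}\,\nu(\dd\tau)<\infty$, not only the part near $\tau=0$. Two small cosmetic points: in step 2 one only has $f'(s)\geq0$, with equality when $f$ is constant, and the ``$\mathrm{const}(\tau)$'' in step 4 is in fact zero.
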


This is
\cite[Theorem~6.2, pp.~69--75, and Theorem~6.9, pp.~78--79]{Schilling}.
The formulation of Theorem~6.2 in that reference also records that
the right limit $f(0+)$ exists and is real. In the present
formulation this is automatic: Theorem~6.9 identifies the complete
Bernstein functions directly with the Pick functions that are
nonnegative on the positive half-axis. Equivalently, such a function
is nondecreasing there and hence
$$
 0\leq f(0+)=\inf_{s>0}f(s)\leq f(1)<\infty.
$$
Differentiation under the integral is justified on every compact
subinterval of $(0,\infty)$. Indeed, if $s\geq a>0$, then, for
each $m=0,1,2,\ldots$, splitting into $0<t\leq1$ and $t\geq1$
gives a constant $C_{a,m}$ such that
$$
 \frac{t}{(s+t)^{m+2}}
 \leq \frac{C_{a,m}}{1+t},\quad t>0.
$$
The integrability assumption on $\nu$ therefore supplies a common
majorant, so dominated convergence gives
$$
 f'(s)=b+\int_{(0,\infty)}\frac{t}{(s+t)^2}\,\nu(\dd t)
$$
and, for $m\geq1$,
$$
 f^{(m+1)}(s)
 =(-1)^m(m+1)!\int_{(0,\infty)}
 \frac{t}{(s+t)^{m+2}}\,\nu(\dd t).
$$
Therefore
$$
 (-1)^m f^{(m+1)}(s)\geq0,\quad m=0,1,2,\ldots.
$$
If the representing measure is nonzero, the inequalities for
$m\geq1$ are strict; the first one is strict unless both the measure
and the linear coefficient vanish. In our applications a visible
square-root singularity proves that the measure is nonzero.

\subsection{The boundary minimum principle}

The imaginary part of a holomorphic function is harmonic. The
minimum principle for harmonic functions says that a harmonic
function on a bounded domain cannot have a strictly negative interior
minimum if its boundary values are nonnegative; see
\cite[Chapter~4, Section~6.2, Theorem~21, p.~166]{Ahlfors}. Our domains will be upper
half-discs from which small half-discs around real singularities have
been removed. We shall therefore construct a positive barrier near
every removed singularity and obtain a uniform sign estimate on the
large outer semicircle. Both estimates are proved explicitly in
Lemma~\ref{lem:boundary}. This
avoids appealing informally to the boundary behaviour of a multivalued
algebraic function.

\subsection{Algebraic singularities and coefficient transfer}

If $A(z)=\sum_{r=0}^{\infty}a_rz^r$ near the origin, we write
$$
 [z^r]A(z):=a_r.
$$
Thus the square brackets used below denote coefficient extraction,
not an interval or an integer-part operation.

Near a finite singularity $z_0$, each branch of an algebraic
function has a convergent expansion in integral powers of
$(z-z_0)^{1/q}$, for some integer $q\geq1$, on a sufficiently
small disc with a ray from $z_0$ removed. Only finitely many negative
powers occur, and none occur if the branch is bounded. This is the
convergent form of the Newton--Puiseux theorem;
see \cite[Theorem~VII.7, p.~498]{Flajolet}.

For coefficient transfer, the continuation hypothesis must be stated
explicitly. Suppose that $A(z)=\sum_{r=0}^{\infty}a_rz^r$ is
$\Delta$-analytic at its unique singularity $R>0$ of smallest
modulus, in the sense of
\cite[Definition~VI.1, pp.~389--390]{Flajolet}, and that in that
$\Delta$-domain, for some $B\neq0$,
$$
 A(z)=A_0-B(1-z/R)^\alpha+O((1-z/R)^\beta),
 \quad 0<\alpha<\beta,\quad
 \alpha,\beta\notin\{0,1,2,\ldots\}.
$$
Then the transfer theorem gives the complete coefficient estimate
$$
 a_r
 =-\frac{B\,R^{-r}r^{-\alpha-1}}{\Gamma(-\alpha)}
 +O(R^{-r}r^{-\alpha-2})
 +O(R^{-r}r^{-\beta-1}).
$$
The coefficient formula is the standard scale of
\cite[Theorem~VI.1, p.~381]{Flajolet}; passage from a local
singular expansion to coefficient asymptotics is given by the
big-Oh transfer theorem
\cite[Theorem~VI.3, pp.~390--392]{Flajolet}.
Section~\ref{sec:negative} verifies the uniqueness of the dominant singularity
before applying this formula.

\section{Algebraic structure of the zero branches}
\label{sec:algebra}

For the algebraic arguments below, coefficient extraction from the
generating function gives the explicit finite sum
\begin{equation}\label{eq:gegen-sum}
 C_n^\lambda(x)=
 \sum_{r=0}^{\lfloor n/2\rfloor}
 \frac{(-1)^r(\lambda)_{n-r}(2x)^{n-2r}}
 {r!(n-2r)!}.
\end{equation}
This is also \cite[Eq.~(4.7.31), p.~84]{Szego}. Its leading
coefficient is $2^n(\lambda)_n/n!$. At nonpositive
integer parameters the unnormalised polynomial may have a common
normalisation zero. Such factors are to be removed before discussing
zero branches.

Away from the zeros of its leading coefficient, let
$\widehat C_n^\lambda$ denote the monic Gegenbauer polynomial,
$$
 \widehat C_n^\lambda(x)
 :=\frac{n!}{2^n(\lambda)_n}C_n^\lambda(x).
$$
After cancellation, the right-hand side defines a monic rational
family. In particular, its apparent singularity at $\lambda=0$ is
removable: every coefficient of $C_n^\lambda$ contains the factor
$\lambda$, whereas $(\lambda)_n/\lambda$ does not vanish there.
Its coefficients are rational functions of $\lambda$. Accordingly,
$\Disc_x\widehat C_n^\lambda$ below is a rational function; its
zeros record finite multiple roots and its poles record parameters at
which this monic rational family ceases to be regular.
The following formula follows from the Jacobi discriminant recorded
in \cite[Eq.~(3.4.16), p.~69]{Ismail}; see also
\cite[Eq.~(18.16.19)]{DLMF}.

\begin{proposition}\label{prop:disc}
There exists a constant $D_n>0$, independent of $\lambda$, such
that
\begin{equation}\label{eq:disc}
 \Disc_x\widehat C_n^\lambda
 =D_n
 \frac{\displaystyle
 \prod_{r=1}^{\lfloor n/2\rfloor-1}
 (2\lambda+2r+1)^{2r}}
 {\displaystyle
 \prod_{s=\lceil n/2\rceil}^{n-1}
 (\lambda+s)^{2s-1}}.
\end{equation}
In particular, every finite discriminant point in the
$\lambda$-plane is real.
\end{proposition}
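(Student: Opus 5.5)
We derive \eqref{eq:disc} from the classical Jacobi discriminant by specialising $\alpha=\beta=\lambda-1/2$. Stieltjes' formula, as recorded in \cite[Eq.~(3.4.16), p.~69]{Ismail}, gives for the monic Jacobi polynomial $\widehat P_n^{(\alpha,\beta)}$
$$
 \Disc_x\widehat P_n^{(\alpha,\beta)}
 =2^{-n(n-1)}\prod_{j=1}^{n} j^{\,j-2n+2}
 (j+\alpha)^{j-1}(j+\beta)^{j-1}
 (n+j+\alpha+\beta)^{n-j}.
$$
The Gegenbauer polynomial $C_n^\lambda$ is a nonzero constant multiple of $P_n^{(\lambda-1/2,\lambda-1/2)}$ for generic $\lambda$, so the two monic polynomials coincide as rational families. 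Both sides are rational in $\lambda$, hence it suffices to prove the identity on an open real interval, say $\lambda>0$, and extend it by the identity theorem. This also covers the removable point $\lambda=0$ noted above.

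The main step is bookkeeping. With $\alpha=\beta=\lambda-1/2$, the factor $(j+\alpha)^{j-1}(j+\beta)^{j-1}$ becomes $2^{-2(j-1)}(2\lambda+2j-1)^{2(j-1)}$. Writing $r=j-1$, this is a constant times $(2\lambda+2r+1)^{2r}$ for $0\le r\le n-1$. The other factor $(n+j+2\lambda-1)^{n-j}$ equals $2^{n-j}(\lambda+(n+j-1)/2)^{n-j}$. I would then separate parities. When $n+j-1$ is odd, i.e.\ $n+j$ even, it equals $(2\lambda+2r+1)^{n-j}$ with $2r+1=n+j-1$. When $n+j$ is odd, it is $(\lambda+s)^{n-j}$ with $s=(n+j-1)/2$ an integer between $\lceil n/2\rceil$ and $n-1$. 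So the numerator-type factors $(2\lambda+2r+1)$ come from two sources, and they must be collected. For each $r$ I would compute the total exponent $2r+(n-2r-1+ \ldots)$. The stated exponents $2r$ for $r\le\lfloor n/2\rfloor-1$ and $-(2s-1)$ on $(\lambda+s)$ must then appear once the Jacobi leading-coefficient normalisation is accounted for.

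This is the delicate point, and the main obstacle. The exponents in \eqref{eq:disc} differ from a naive substitution: factors $(2\lambda+2r+1)$ with $r\ge\lfloor n/2\rfloor$ must cancel, and $(\lambda+s)$ must appear in the denominator. To see why, pass from $\widehat P_n^{(\alpha,\beta)}$ to $\widehat C_n^\lambda$ through the quadratic transformation. This expresses $C_n^\lambda$ in terms of $P_{\lfloor n/2\rfloor}^{(\lambda-1/2,\pm1/2)}(2x^2-1)$, up to a factor $x$ when $n$ is odd. The discriminant of $q(x^2)$ or $xq(x^2)$ is, up to a constant, $q(0)\Disc(q)^2$ or $q(0)^3\Disc(q)^2$ respectively. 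The value $q(0)$ is a ratio of Pochhammer symbols whose denominator produces the $(\lambda+s)$ factors. I would therefore use this route: apply the Stieltjes formula to the degree-$\lfloor n/2\rfloor$ Jacobi polynomial with $\beta=\mp1/2$, compute the monic value at $x^2=0$ from \eqref{eq:gegen-sum} as $(-1)^m\frac{n!\,(\lambda)_{n-m}}{m!(n-2m)!\,2^{2m}(\lambda)_n}$ with $m=\lfloor n/2\rfloor$, and verify the exponents in both parity cases. Cross-checking with \cite[Eq.~(18.16.19)]{DLMF} and the cases $n=2,3$ is advisable.

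Finally, positivity of $D_n$ follows by evaluating at large real $\lambda$. There all zeros are real and simple, so the discriminant is positive and the rational factor in \eqref{eq:disc} is positive. The last assertion is immediate: every zero and pole of the right-hand side, $\lambda=-r-1/2$ or $\lambda=-s$, is real.
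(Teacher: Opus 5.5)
\textbf{There is a genuine gap, at exactly the step you call the main obstacle.}

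The Stieltjes formula you quote is not the discriminant of the monic polynomial $\widehat P_n^{(\alpha,\beta)}$. It is $k_n^{2n-2}\prod_{r<s}(x_r-x_s)^2$ for $P_n^{(\alpha,\beta)}$ in its standard normalisation, where $k_n=2^{-n}\Gamma(2n+\alpha+\beta+1)/\bigl(n!\,\Gamma(n+\alpha+\beta+1)\bigr)$. For Legendre with $n=2$ it gives $3$, whereas the monic discriminant is $4/3$.

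This normalisation, and nothing else, explains the discrepancy you noticed after naive substitution. At $\alpha=\beta=\lambda-1/2$ one has $k_n\propto\prod_{r=n}^{2n-1}(2\lambda+r)$. Dividing by $k_n^{2n-2}$ changes the exponent of $2\lambda+r$, for $n\le r\le 2n-1$, as follows:
\begin{itemize}
\item for odd $r$ it becomes $(r-1)+(2n-1-r)-(2n-2)=0$;
\item for even $r=2s$ it becomes $(2n-1-r)-(2n-2)=1-r$;
\item odd $r<n$ keep exponent $r-1$.
\end{itemize}
This is exactly \eqref{eq:disc}, and it is the paper's proof. Your first route closes as soon as the normalisation is corrected, and no quadratic transformation is needed.

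Your explanation of the denominator is therefore incorrect, and the route you commit to inherits the same error. The factor $q(0)\propto\prod_{s=\lceil n/2\rceil}^{n-1}(\lambda+s)^{-1}$ supplies only the power $-1$ (even $n$) or $-3$ (odd $n$) of each $\lambda+s$, whereas $-(2s-1)$ is required. The remaining powers $2-2s$ (even $n$), respectively $4-2s$ (odd $n$), come from $\Disc(q)^2$. They appear only after the degree-$m$ Stieltjes expression is divided by $k_m^{2m-2}$, where $k_m\propto\prod(\lambda+s)$ over the same range $\lceil n/2\rceil\le s\le n-1$.

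If you use the degree-$m$ formula as a monic discriminant, as your proposal states, then for $n=4$ you obtain a multiple of $(2\lambda+3)^2(\lambda+2)/(\lambda+3)$ rather than of $(2\lambda+3)^2(\lambda+2)^{-3}(\lambda+3)^{-5}$. Your suggested checks at $n=2,3$ cannot detect this. In those cases $m=1$, and a degree-one discriminant equals $1$ in every normalisation.

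Once the normalisation is corrected, the quadratic route is a valid alternative, and it makes the role of the central value $q(0)$ visible. But it needs the same leading-coefficient bookkeeping on top of the transformation, so it is strictly longer than the direct computation.
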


\begin{proof}
The monic Gegenbauer polynomial is the monic Jacobi polynomial with
parameters
$$
 \alpha=\beta=\lambda-\frac12.
$$
We first work with $\lambda>0$, where the degrees and all the
uncancelled normalisations occurring below are regular. Both the resulting
discriminant and the expression in \eqref{eq:disc} are rational
functions of $\lambda$. Once the identity has been proved on this
open real interval, the identity theorem for rational functions
extends it meromorphically to the whole $\lambda$-plane. Thus no
formula valid only in the orthogonality range is being substituted
directly at an exceptional parameter.

We spell out the cancellation because the poles in
\eqref{eq:disc} are important later. The Jacobi discriminant formula
\cite[Eq.~(3.4.16), p.~69]{Ismail} is
$$
 \Disc P_n^{(\alpha,\beta)}
 =2^{-n(n-1)}
 \prod_{j=1}^n
 j^{\,j-2n+2}(j+\alpha)^{j-1}(j+\beta)^{j-1}
 (n+j+\alpha+\beta)^{n-j}.
$$
The leading coefficient of $P_n^{(\alpha,\beta)}$ is
$$
 2^{-n}\frac{\Gamma(2n+\alpha+\beta+1)}
 {n!\,\Gamma(n+\alpha+\beta+1)}.
$$
This is \cite[Eq.~(4.21.6), p.~63]{Szego}, written in gamma
notation. 
For a degree-$n$ polynomial, passage to the monic normalisation
divides the discriminant by the $(2n-2)$-nd power of the leading
coefficient. We now insert
$\alpha=\beta=\lambda-1/2$. The factors depending on $\lambda$
are
$$
 \prod_{j=1}^n
 \left(\lambda+j-\frac12\right)^{2j-2}
 (2\lambda+n+j-1)^{n-j},
$$
together with the reciprocal factors coming from the leading
coefficient. We include the exponent count. Apart from constants
independent of $\lambda$, the first family contributes exponent
$r-1$ to $2\lambda+r$ when $r$ is odd and
$1\leq r\leq2n-1$. The second family contributes exponent
$2n-1-r$ when $n\leq r\leq2n-2$. Finally,
$$
 \frac{\Gamma(2n+2\lambda)}
 {\Gamma(n+2\lambda)}
 =\prod_{r=n}^{2n-1}(2\lambda+r)
$$
occurs in the leading coefficient, so monic normalisation contributes
exponent $-(2n-2)$ for $n\leq r\leq2n-1$.

For odd $r<n$, the resulting exponent is $r-1$. For odd
$r\geq n$, the three contributions cancel. For even $r\geq n$,
the resulting exponent is $1-r$; all remaining exponents vanish.
Writing $r=2q+1$ in the first case and $r=2s$ in the last gives
precisely
$$
 \prod_{q=1}^{\lfloor n/2\rfloor-1}
 (2\lambda+2q+1)^{2q}
$$
in the numerator and
$$
 \prod_{s=\lceil n/2\rceil}^{n-1}
 (\lambda+s)^{2s-1}
$$
in the denominator. Powers of $2$ have been absorbed into the
constant, as have all factors depending only on $n$. To determine
its sign, take any
$\lambda>0$: the monic Gegenbauer polynomial then has $n$
distinct real zeros, so its discriminant is positive. Hence the
remaining constant $D_n$ is positive.
\end{proof}

The zeros of the numerator are
$$
 \lambda_k:=\frac12-k,\quad
 2\leq k\leq\left\lfloor\frac n2\right\rfloor.
$$
At these points one has the following factorisation, which we prove
directly.

\begin{proposition}\label{prop:factor}
If $1\leq k\leq\lfloor n/2\rfloor$, then
\begin{equation}\label{eq:factor}
 C_n^{1/2-k}(x)
 =c_{n,k}(1-x^2)^k C_{n-2k}^{k+1/2}(x),
 \quad c_{n,k}\neq0.
\end{equation}
In particular, $x=1$ and $x=-1$ have multiplicity $k$, and all
zeros of the last factor are simple and belong to $(-1,1)$.
\end{proposition}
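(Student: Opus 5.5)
The plan is to identify both sides of \eqref{eq:factor} as polynomial solutions of the Gegenbauer differential equation and to use a uniqueness statement for such solutions. For every $\lambda$, $y=C_n^\lambda$ satisfies
$$
 (1-x^2)y''-(2\lambda+1)xy'+n(n+2\lambda)y=0 .
$$
This is an algebraic identity in $\lambda$: it can be checked on \eqref{eq:gegen-sum} coefficientwise, or taken from \cite{Szego} for $\lambda>-1/2$ and extended polynomially in $\lambda$. At $\lambda=1/2-k$ the equation reads
$$
 (1-x^2)y''-2(1-k)xy'+n(n+1-2k)y=0 .
$$
The leading coefficient of $C_n^{1/2-k}$ is $2^n(1/2-k)_n/n!$. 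It is nonzero, because the factors $1/2-k+i$ are half-odd-integers and never vanish. Hence $C_n^{1/2-k}$ is a genuine polynomial of degree $n$ and parity $(-1)^n$.

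\emph{Uniqueness of the polynomial solution.} Insert $y=\sum_j a_jx^j$ into the equation at $\lambda=1/2-k$. The coefficient of $x^j$ gives
$$
 (j+2)(j+1)a_{j+2}=\bigl[j(j+1-2k)-n(n+1-2k)\bigr]a_j=-(n-j)(n+j+1-2k)\,a_j .
$$
For $j<n$ we have $n+j+1-2k\geq j+1>0$ because $2k\leq n$. So the bracket never vanishes, and in particular no degree $m\neq n$ shares the eigenvalue $n(n+1-2k)$. It follows that a polynomial solution has degree exactly $n$, is determined by its leading coefficient, and has parity $(-1)^n$. Running the recursion downward from $a_n$ fixes every lower coefficient. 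Thus the space of polynomial solutions is one-dimensional.

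\emph{The factored candidate.} Set $u=C_{n-2k}^{k+1/2}$. It satisfies
$$
 (1-x^2)u''-(2k+2)xu'+(n-2k)(n+1)u=0 .
$$
I would substitute $y=(1-x^2)^ku$ into the operator
$$
 L=(1-x^2)\partial_x^2-2(1-k)x\partial_x+n(n+1-2k).
$$
Using $y'=(1-x^2)^{k-1}\bigl[(1-x^2)u'-2kxu\bigr]$ and differentiating once more, one divides out $(1-x^2)^k$. The terms of order $(1-x^2)^{-1}$ should cancel, which is where the particular value $\lambda=1/2-k$ enters. What remains should be $(1-x^2)u''-(2k+2)xu'+[n(n+1-2k)-2k(1-k)-2k]u$, possibly up to a harmless rearrangement. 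The constant equals
$$
 n^2+n-2kn-2k^2\cdot 0\;\ldots=(n-2k)(n+1),
$$
and I expect this bookkeeping to check out. This computation is the only real obstacle, and it is purely mechanical.

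\emph{Conclusion.} Once the computation is done, $(1-x^2)^ku$ is a polynomial solution of degree $2k+(n-2k)=n$. By the uniqueness step, $C_n^{1/2-k}=c_{n,k}(1-x^2)^kC_{n-2k}^{k+1/2}$ with $c_{n,k}\neq0$, since both sides are nonzero. For the multiplicity claim, note that $C_{n-2k}^{k+1/2}(\pm1)=\pm^{\,n}(2k+1)_{n-2k}/(n-2k)!\neq0$, so $x=\pm1$ has multiplicity exactly $k$. Finally, $k+1/2>0$ lies in the orthogonality range $\lambda>-1/2$, $\lambda\neq0$. Theorem~3.3.1 of \cite{Szego} then gives that the zeros of the last factor are simple and lie in $(-1,1)$.
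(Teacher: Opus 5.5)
Your route is the paper's own: the specialised Gegenbauer equation, the recurrence $(j+2)(j+1)a_{j+2}=-(n-j)(n+j+1-2k)a_j$ showing that the polynomial solutions form a one-dimensional space, the substitution $y=(1-x^2)^ku$, and Theorem~3.3.1 of Szeg\H{o} for the residual factor. Your evaluation of $C_{n-2k}^{k+1/2}(\pm1)$ is a harmless explicit replacement for the paper's remark that the residual zeros lie in $(-1,1)$.

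The weak point is the one step the argument rests on: you never carry out the operator identity, and the bracket you display is wrong. The expression $n(n+1-2k)-2k(1-k)-2k$ equals $n^2+n-2kn-4k+2k^2$, which differs from $(n-2k)(n+1)$ by $2k(k-1)$. So for $k\geq2$ your computation, taken literally, would show that $(1-x^2)^ku$ is \emph{not} a solution. There is no $-2k(1-k)$ term. With $w=1-x^2$,
\[
 y''=w^ku''-4kxw^{k-1}u'-2kw^{k-1}u+4k(k-1)x^2w^{k-2}u,
\]
and therefore
\[
 wy''-2(1-k)xy'
 =w^{k+1}u''-(2k+2)xw^ku'-2kw^ku
 +\bigl[4k(k-1)+4k(1-k)\bigr]x^2w^{k-1}u .
\]
The last bracket vanishes identically; this is the cancellation that singles out $\lambda=1/2-k$. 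The only zeroth-order term added to $n(n+1-2k)$ is therefore $-2k$, and $n(n+1-2k)-2k=(n-2k)(n+1)$. This is exactly the identity the paper states, and with it your argument is complete.
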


\begin{proof}
Let
$$
 Y(x)=(1-x^2)^kC_{n-2k}^{k+1/2}(x).
$$
The Gegenbauer differential equation is
\begin{equation}\label{eq:gegen-ode}
 (1-x^2)y''-(2\lambda+1)xy'
 +n(n+2\lambda)y=0.
\end{equation}
See \cite[Eq.~(4.7.5), p.~80]{Szego}; it also follows by
differentiating the generating function twice and comparing
coefficients.
Substitute $\lambda=1/2-k$ in \eqref{eq:gegen-ode}, insert the
displayed expression for $Y$. To make the cancellation verifiable,
write $u=C_{n-2k}^{k+1/2}$. A direct differentiation gives the
operator identity
\begin{align*}
 &(1-x^2)Y''-(2-2k)xY'+n(n+1-2k)Y\\[7pt]
 &\quad=(1-x^2)^k
 \bigl\{(1-x^2)u''-(2k+2)xu'
 +(n-2k)(n+1)u\bigr\}.
\end{align*}
The expression in braces is zero because it is precisely the
Gegenbauer equation of degree $n-2k$ and parameter $k+1/2$.
Thus $Y$ is a polynomial solution of \eqref{eq:gegen-ode} of
degree $n$.

For completeness, inserting
$y=\sum_{\ell=0}^na_\ell x^\ell$ into
\eqref{eq:gegen-ode} gives
$$
 (\ell+2)(\ell+1)a_{\ell+2}
 +(n-\ell)(n+\ell+2\lambda)a_\ell=0.
$$
At $\lambda=1/2-k$, the equation with $\ell=n-1$ first gives
$$
 2(n-k)a_{n-1}=0,
$$
so $a_{n-1}=0$. For $0\leq\ell\leq n-2$, both factors in
$(n-\ell)(n+\ell+1-2k)$ are nonzero, because $n\geq2k$.
Starting from $a_n$, the recurrence therefore determines all
coefficients of the same parity uniquely; starting from
$a_{n-1}=0$, it forces every coefficient of the opposite parity to
vanish. Hence the degree-$n$ polynomial solution is unique up to a
scalar, proving \eqref{eq:factor}. The scalar is nonzero because both
sides have degree $n$.

The factor $(1-x^2)^k$ gives roots of multiplicity $k$ at
$x=1$ and $x=-1$. Since $k+1/2>-1/2$, the residual
Gegenbauer polynomial is orthogonal and has $n-2k$ simple roots in
$(-1,1)$, by \cite[Theorem~3.3.1, pp.~44--46]{Szego}. This proves
the final assertion.
\end{proof}

\begin{remark}\label{rem:degree-loss}
The common factors in $\lambda$ can be removed explicitly. If
$n=2m$, the coefficient of $x^{2m-2r}$ in
$C_{2m}^\lambda/(\lambda)_m$ is
$$
 \frac{(-1)^r2^{2m-2r}}{r!(2m-2r)!}
 (\lambda+m)_{m-r}.
$$
If $n=2m+1$, the coefficient of $x^{2m+1-2r}$ in
$C_{2m+1}^\lambda/(\lambda)_{m+1}$ is
$$
 \frac{(-1)^r2^{2m+1-2r}}{r!(2m+1-2r)!}
 (\lambda+m+1)_{m-r}.
$$
Thus coefficient losses occur only at real parameters. We now prove
the precise growth estimate for every root that escapes to infinity,
because the boundary minimum principle later depends on it.

Consider first $n=2m$, and write
$$
 R(\lambda,x):=\frac{C_{2m}^{\lambda}(x)}{(\lambda)_m}
 =\sum_{r=0}^{m}a_r(\lambda)x^{2m-2r}.
$$
Let $a=-s$ be a degree-loss point,
$m\leq s\leq2m-1$, and put
$$
 q:=2m-s,\quad \delta:=\lambda-a.
$$
The displayed coefficient formula gives the exact valuations
$$
 \operatorname{ord}_{\delta=0}a_r(a+\delta)=1,
 \quad 0\leq r<q,
$$
and
$$
 a_r(a)\neq0,\quad q\leq r\leq m.
$$
In particular,
$$
 a_0(a+\delta)=a_0'(a)\delta+O(\delta^2),
 \quad a_0'(a)\neq0,\quad a_q(a)\neq0.
$$

Roots at infinity must be studied in the reciprocal coordinate
$w=1/x$. Define
$$
 \widetilde R(\delta,w)
 :=w^{2m}R(a+\delta,w^{-1})
 =\sum_{r=0}^{m}a_r(a+\delta)w^{2r}.
$$
To describe all roots tending to infinity, put
$$
 \delta=v^{2q},\quad w=v\xi.
$$
Every term of $\widetilde R(v^{2q},v\xi)$ is divisible by
$v^{2q}$, and division produces a function holomorphic near
$(v,\xi)=(0,\xi_0)$, whose value at $v=0$ is
$$
 a_0'(a)+a_q(a)\xi^{2q}.
$$
This polynomial has $2q$ distinct nonzero roots. The holomorphic
implicit-function theorem therefore gives $2q$ distinct convergent
branches
$$
 w(v)=v\{\xi_0+O(v)\}.
$$
At $\delta=0$, the reciprocal polynomial has a zero of multiplicity
exactly $2q$ at $w=0$. To verify exhaustion, choose
$\rho>0$ so small that $w=0$ is the only zero of
$\widetilde R(0,w)$ in $|w|\leq\rho$. Then
$$
 M_\rho:=\min_{|w|=\rho}|\widetilde R(0,w)|>0.
$$
Uniform convergence on that circle gives, for all sufficiently small
$|\delta|$,
$$
 |\widetilde R(\delta,w)-\widetilde R(0,w)|<M_\rho,
 \quad |w|=\rho.
$$
Rouché's theorem
\cite[Chapter~4, Section~5.2, p.~153]{Ahlfors} therefore gives
exactly $2q$ zeros of $\widetilde R(\delta,\,\cdot\,)$ in the
disc, counted with multiplicity. The $2q$ distinct branches already
constructed account for all of them. Since there are only finitely
many functions $\xi(v)$ and their values at $v=0$ are nonzero,
they remain bounded away from zero in a common disc. Hence every
divergent root satisfies, uniformly in the argument of $\lambda-a$,
$$
 |x(\lambda)|
 =O\bigl(|\lambda-a|^{-1/(2q)}\bigr).
$$
Since $q\geq1$, this also gives
$|x(\lambda)|=O(|\lambda-a|^{-1/2})$.

For $n=2m+1$, factor out the permanent zero $x=0$ and write
$$
 R_{\mathrm o}(\lambda,x)
 :=\frac{C_{2m+1}^{\lambda}(x)}
 {x(\lambda)_{m+1}}
 =\sum_{r=0}^{m}b_r(\lambda)x^{2m-2r}.
$$
The coefficient formula gives
$$
 b_r(\lambda)
 =\frac{(-1)^r2^{2m+1-2r}}
 {r!(2m+1-2r)!}
 (\lambda+m+1)_{m-r}.
$$
Let $a=-s$, where $m+1\leq s\leq2m$, and put
$$
 q:=2m+1-s,\quad \delta:=\lambda-a.
$$
Then
$$
 \operatorname{ord}_{\delta=0}b_r(a+\delta)=1,
 \quad 0\leq r<q,
$$
whereas
$$
 b_r(a)\ne0,\quad q\leq r\leq m.
$$
In particular,
$$
 b_0(a+\delta)=b_0'(a)\delta+O(\delta^2),
 \quad b_0'(a)b_q(a)\ne0.
$$

Introduce the reciprocal polynomial
$$
 \widetilde R_{\mathrm o}(\delta,w)
 :=w^{2m}R_{\mathrm o}(a+\delta,w^{-1})
 =\sum_{r=0}^{m}b_r(a+\delta)w^{2r}.
$$
At $\delta=0$, it has a zero of multiplicity exactly $2q$ at
$w=0$. Put
$$
 \delta=v^{2q},\quad w=v\xi.
$$
Every term of
$\widetilde R_{\mathrm o}(v^{2q},v\xi)$ is divisible by
$v^{2q}$, and
$$
 Q_{\mathrm o}(v,\xi)
 :=v^{-2q}\widetilde R_{\mathrm o}(v^{2q},v\xi)
$$
extends holomorphically to $v=0$, with
$$
 Q_{\mathrm o}(0,\xi)
 =b_0'(a)+b_q(a)\xi^{2q}.
$$
This polynomial has $2q$ distinct nonzero zeros. The holomorphic
implicit-function theorem therefore gives $2q$ distinct branches
$$
 w(v)=v\{\xi_0+O(v)\}.
$$
For fixed sufficiently small $\delta\ne0$, choose one value of
$v$ satisfying $v^{2q}=\delta$; the preceding formulae then give
$2q$ distinct reciprocal roots.

To prove exhaustion, choose $\rho>0$ so small that $w=0$ is the
only zero of $\widetilde R_{\mathrm o}(0,w)$ in
$|w|\leq\rho$, and set
$$
 M_\rho^{\mathrm o}
 :=\min_{|w|=\rho}|\widetilde R_{\mathrm o}(0,w)|>0.
$$
For all sufficiently small $|\delta|$, uniform convergence gives
$$
 |\widetilde R_{\mathrm o}(\delta,w)
   -\widetilde R_{\mathrm o}(0,w)|
 <M_\rho^{\mathrm o},\quad |w|=\rho.
$$
Rouché's theorem
\cite[Chapter~4, Section~5.2, p.~153]{Ahlfors} gives exactly $2q$
zeros in the disc, counted with multiplicity. Hence the constructed
branches exhaust all roots tending to $w=0$, and every divergent
odd-degree zero satisfies, uniformly in the argument of $\lambda-a$,
$$
 |x(\lambda)|
 =O\bigl(|\lambda-a|^{-1/(2q)}\bigr).
$$
The uniformity follows from the same nonvanishing argument as in
even degree. Since $q\geq1$, the bound
$|x(\lambda)|=O(|\lambda-a|^{-1/2})$ follows here as well.
Thus every divergent Gegenbauer zero has boundary growth of order at
most $1/2$.
\end{remark}

\section{Hyperbolicity in the parameter}
\label{sec:hyp}

For $n\geq1$ and $y>0$, define
\begin{equation}\label{eq:Pi}
 \Pi_{n,y}(\lambda)
 :=\frac{(-\ii)^n}{\lambda}C_n^\lambda(\ii y).
\end{equation}
The quotient is a real polynomial in $\lambda$, since
\eqref{eq:gegen-sum} contains the factor $\lambda$.

\begin{proposition}[Parameter hyperbolicity]\label{thm:parameter-hyp}
For every $n\geq2$ and every $y>0$, all zeros of
$\Pi_{n,y}$ are real.
\end{proposition}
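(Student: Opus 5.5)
The plan is to count sign changes of the real polynomial $\Pi_{n,y}$ on the negative $\lambda$-axis, using the two families of special parameters produced in Section~\ref{sec:algebra}. First I make $\Pi_{n,y}$ explicit. Inserting $x=\ii y$ into \eqref{eq:gegen-sum} and using $(-\ii)^n\ii^{n-2r}=(-1)^r$ gives
$$
 \Pi_{n,y}(\lambda)=\sum_{r=0}^{\lfloor n/2\rfloor}
 \frac{(\lambda+1)_{n-r-1}(2y)^{n-2r}}{r!\,(n-2r)!}.
$$
This is a real polynomial of exact degree $n-1$ in $\lambda$, with leading coefficient $(2y)^n/n!>0$. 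All its coefficients in the monomial basis are positive, so $\Pi_{n,y}>0$ for $\lambda\geq0$. It therefore suffices to exhibit $n-1$ sign changes of $\Pi_{n,y}$ on $(-\infty,0)$. The behaviour at $-\infty$ gives the sign $(-1)^{n-1}$, and $\Pi_{n,y}(0)>0$.

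The first family of test points is $\lambda_k=1/2-k$ for $1\leq k\leq\lfloor n/2\rfloor$. By Proposition~\ref{prop:factor},
$$
 \Pi_{n,y}(\lambda_k)=\frac{(-\ii)^n c_{n,k}}{\lambda_k}(1+y^2)^k\,C_{n-2k}^{k+1/2}(\ii y).
$$
The factor $(-\ii)^{n-2k}C_{n-2k}^{k+1/2}(\ii y)$ is positive by the positive-coefficient expansion just written, applied with parameter $k+1/2>0$. Hence the sign is that of $(-1)^k c_{n,k}/\lambda_k$. I would fix $c_{n,k}$ by comparing leading coefficients in \eqref{eq:factor}. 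The left side has leading coefficient $2^n(1/2-k)_n/n!$, of sign $(-1)^k$. The right side has leading coefficient $c_{n,k}(-1)^k$ times a positive number. Hence $c_{n,k}>0$, and $\operatorname{sign}\Pi_{n,y}(\lambda_k)=(-1)^{k+1}$. The values at $0,-1/2,-3/2,\ldots,1/2-\lfloor n/2\rfloor$ therefore have signs $+,+,-,+,\ldots$. This gives $\lfloor n/2\rfloor-1$ sign changes in $[1/2-\lfloor n/2\rfloor,-1/2]$.

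The second family consists of the degree-loss points $\lambda=-s$ with $\lceil n/2\rceil\leq s\leq n-1$. There Remark~\ref{rem:degree-loss} gives an explicit reduced polynomial. In $\Pi_{n,y}(-s)$ only the terms with $n-r-1\leq s-1$, that is $r\geq n-s$, survive. I would evaluate the surviving sum using the factorised coefficients $(\lambda+m)_{m-r}$ of the Remark, rewritten at $x=\ii y$, and aim to show the following. The value is a single-signed sum (after factoring out $(\lambda)_m$ or $(\lambda)_{m+1}$), with sign $(-1)^{s-1}$ up to a fixed factor. Combined with the sign at $-\infty$, this would produce the remaining $\lceil n/2\rceil$ sign changes on $(-\infty,1/2-\lfloor n/2\rfloor]$: one in each interval between consecutive integers $-s$, one between $-\lceil n/2\rceil$ and $1/2-\lfloor n/2\rfloor$, and one beyond $-(n-1)$ towards $-\infty$. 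Adding these to the $\lfloor n/2\rfloor-1$ changes from the half-integers gives $n-1$ real zeros, which is the full degree.

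The main obstacle is this second step: determining the sign of $\Pi_{n,y}(-s)$ uniformly in $y>0$. Naively the surviving terms alternate in sign. What I would try first is to divide by the common Pochhammer factor from Remark~\ref{rem:degree-loss}, since after that division every surviving coefficient $(\lambda+m)_{m-r}$ at $\lambda=-s$ has a fixed sign. If this bookkeeping fails to give a single sign, the fallback is a continuity-in-$y$ argument. For $y\to\infty$ the zeros tend to $-1,\ldots,-(n-1)$, and they can leave the real axis only through a double real zero. I would exclude a double zero by checking, with the three-term recurrence
$$
 (n+1)p_{n+1}=2(n+\lambda)y\,p_n+(n+2\lambda-1)p_{n-1},\qquad p_n:=(-\ii)^nC_n^\lambda(\ii y),
$$
that the zeros of $\Pi_{n,y}$ and $\Pi_{n-1,y}$ strictly interlace.
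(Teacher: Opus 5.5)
Your first step is correct. The positive-coefficient form of $\Pi_{n,y}$, its positivity on $[0,\infty)$, the sign $c_{n,k}>0$, and the sign $(-1)^{k+1}$ at $\lambda_k=1/2-k$ are all right. Together they locate the zeros $-1,\dots,-(\lfloor n/2\rfloor-1)$ coming from the Pochhammer factor.

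The gap is in the second family of test points. For $s>\lceil n/2\rceil$, the value $\Pi_{n,y}(-s)$ has no sign independent of $y$. After division by $(\lambda)_m$ or $(\lambda)_{m+1}$, the surviving coefficients $(m-s)_{m-r}$, resp.\ $(m+1-s)_{m-r}$, still alternate in $r$. So your predicted sign $(-1)^{s-1}$ holds only for large $y$; for small $y$ the lowest-order term in $y$ dominates, and all these values share one sign. Already for $n=4$,
\[
 \Pi_{4,y}(\lambda)=(\lambda+1)\Bigl\{\tfrac12+2y^2(\lambda+2)+\tfrac23y^4(\lambda+2)(\lambda+3)\Bigr\},
 \qquad \Pi_{4,y}(-3)=4y^2-1.
\]
For $0<y<1/2$ the signs at $-\infty,-3,-2,-3/2,-1/2,0$ are $-,-,-,-,+,+$, which detect only one of the three real zeros.

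Choosing other fixed points cannot repair this. The zeros not coming from the Pochhammer factor tend to $-\infty$ as $y\downarrow0$ and to $-\lceil n/2\rceil,\dots,-(n-1)$ as $y\to\infty$, while staying below $-\lceil n/2\rceil$. Hence every fixed point below $-\lceil n/2\rceil$ is itself a zero of $\Pi_{n,y}$ for some $y>0$. Your interval list is also inconsistent for even $n$: it yields $2\lfloor n/2\rfloor=n$ sign changes for a polynomial of degree $n-1$. In fact there is no zero in $(-n/2,1/2-n/2)$.

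The fallback does not close the gap either. The deformation in $y$ is legitimate, since the leading coefficient $(2y)^n/n!$ never vanishes. But it only reduces the problem to simplicity of the zeros for every $y>0$, and the interlacing you propose for that is false. For $n\geq4$, both $\Pi_{n,y}$ and $\Pi_{n-1,y}$ contain the factor $(\lambda+1)_{\lfloor n/2\rfloor-1}$, so they share the zeros $-1,\dots,-(\lfloor n/2\rfloor-1)$. Moreover, the coefficient $n+2\lambda-1$ in your recurrence changes sign at $(1-n)/2$, inside the range of the zeros. So the usual sign-alternation argument does not run on the full polynomials.

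The missing idea is to strip the Pochhammer factors first: write $\Pi_{2m,y}=(\lambda+1)_{m-1}S_m$ and $\Pi_{2m+1,y}=(\lambda+1)_mT_m$, with $S_m>0$ on $[-m,\infty)$ and $T_m>0$ on $[-m-1,\infty)$. Then prove real-rootedness of $S_m$ and $T_m$ by a simultaneous induction on the coupled recurrences
\[
 mS_m=y(\lambda+2m-1)T_{m-1}+S_{m-1},\qquad
 (2m+1)(\lambda+m)T_m=2y(\lambda+2m)S_m+(2\lambda+2m-1)T_{m-1},
\]
evaluated at the $y$-dependent zeros of $T_{m-1}$ and of $S_m$. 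The factor $(2\lambda+2m-1)/(\lambda+m)$ is positive at the zeros of $S_m$ precisely because they lie below $-m$. This is the paper's argument. Alternatively, $S_m$ and $T_m$ are Meixner polynomials in $-\lambda-m$ and $-\lambda-m-1$ with $c=(1+y^2)^{-1}$, and their zeros are real by discrete orthogonality.
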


\begin{proof}
Substitution of $x=\ii y$ in \eqref{eq:gegen-sum} removes all
alternating signs:
\begin{equation}\label{eq:positive-sum}
 (-\ii)^nC_n^\lambda(\ii y)
 =\sum_{r=0}^{\lfloor n/2\rfloor}
 \frac{(\lambda)_{n-r}(2y)^{n-2r}}
 {r!(n-2r)!}.
\end{equation}
For $m=0,1,2,\ldots$, put
\begin{align}
 S_m(\lambda)
 &:=\frac{(-\ii)^{2m}C_{2m}^\lambda(\ii y)}
 {(\lambda)_m}
 =\sum_{k=0}^m
 \frac{(\lambda+m)_k(2y)^{2k}}
 {(m-k)!(2k)!},                                      \label{eq:S}\\[7pt]
 T_m(\lambda)
 &:=\frac{(-\ii)^{2m+1}C_{2m+1}^\lambda(\ii y)}
 {(\lambda)_{m+1}}
 =\sum_{k=0}^m
 \frac{(\lambda+m+1)_k(2y)^{2k+1}}
 {(m-k)!(2k+1)!}.                                    \label{eq:T}
\end{align}
At a zero of the Pochhammer symbol occurring in either denominator,
the quotient notation in \eqref{eq:S}--\eqref{eq:T} denotes the
polynomial obtained after cancellation. Equivalently, the finite
sums on the right-hand sides define $S_m$ and $T_m$ at every
exceptional parameter.
It follows that
\begin{equation}\label{eq:Pi-factor}
\begin{aligned}
 \Pi_{2m,y}&=(\lambda+1)_{m-1}S_m,
 \quad m=1,2,\ldots,\\[7pt]
 \Pi_{2m+1,y}&=(\lambda+1)_mT_m,
 \quad m=0,1,2,\ldots.
\end{aligned}
\end{equation}
Because every rising factorial $(u)_k$ has nonnegative
coefficients in $u$,
\begin{equation}\label{eq:ST-positive}
 S_m(\lambda)>0,\quad \lambda\geq-m,\quad
 T_m(\lambda)>0,\quad \lambda\geq-m-1.
\end{equation}

The proof uses two coupled recurrences, which we derive from the
three-term Gegenbauer recurrence
$$
 nC_n^\lambda(x)
 =2(n+\lambda-1)xC_{n-1}^\lambda(x)
 -(n+2\lambda-2)C_{n-2}^\lambda(x).
$$
This is \cite[Eq.~(4.7.17), p.~81]{Szego}.
First take $n=2m$, put $x=\ii y$, multiply by
$(-\ii)^{2m}$, and use
$$
 (-\ii)^{2m}C_{2m}^\lambda(\ii y)=(\lambda)_mS_m,
\quad
 (-\ii)^{2m-1}C_{2m-1}^\lambda(\ii y)
 =(\lambda)_mT_{m-1}.
$$
For the moment take $(\lambda)_m\ne0$. After division by
$2(\lambda)_m$, this gives
\eqref{eq:rec1}.  Taking $n=2m+1$ in the same recurrence and using
$$
 (-\ii)^{2m+1}C_{2m+1}^\lambda(\ii y)
 =(\lambda)_{m+1}T_m,
\quad
 (-\ii)^{2m}C_{2m}^\lambda(\ii y)=(\lambda)_mS_m
$$
gives \eqref{eq:rec2}, after division by $(\lambda)_m$. Both final
identities are polynomial identities in $\lambda$. Since they hold
away from the finite zero set of $(\lambda)_m$, they hold for every
$\lambda$ by polynomial continuation. In particular, all subsequent
evaluations at negative integers are legitimate. For
$m=1,2,\ldots$, the two recurrences are
\begin{align}
 mS_m&=y(\lambda+2m-1)T_{m-1}+S_{m-1},               \label{eq:rec1}\\[7pt]
 (2m+1)(\lambda+m)T_m
 &=2y(\lambda+2m)S_m
 +(2\lambda+2m-1)T_{m-1}.                            \label{eq:rec2}
\end{align}
We prove simultaneously, for $m=1,2,\ldots$, that $S_m$ has $m$
simple zeros, all
less than $-m$, that $T_m$ has $m$ simple zeros, all less than
$-m-1$, and that, in increasing order,
\begin{align}
 t_{m,1}<s_{m,1}<\cdots<t_{m,m}<s_{m,m},
 \quad m=1,2,\ldots,                                  \label{eq:inter1}\\[7pt]
 s_{m,1}<t_{m-1,1}<s_{m,2}<\cdots
 <t_{m-1,m-1}<s_{m,m},
 \quad m=2,3,\ldots.                                  \label{eq:inter2}
\end{align}

For $m=1$,
$$
 S_1=1+2y^2(\lambda+1),\quad
 T_1=2y+\frac43y^3(\lambda+2),
$$
so that
$$
 s_{1,1}=-1-\frac1{2y^2},\quad
 t_{1,1}=-2-\frac3{2y^2}.
$$
Thus $t_{1,1}<s_{1,1}$, with
$s_{1,1}<-1$ and $t_{1,1}<-2$, proving the base case.

Let $m=2,3,\ldots$, and assume the assertions through $m-1$.
Both $S_m$ and $T_m$
have degree $m$, and their leading coefficients are respectively
$$
 \frac{(2y)^{2m}}{(2m)!}>0,\quad
 \frac{(2y)^{2m+1}}{(2m+1)!}>0.
$$
Consequently, each has sign $(-1)^m$ at $-\infty$.
At a zero $t_{m-1,r}$, equation \eqref{eq:rec1} reduces to
$$
 mS_m(t_{m-1,r})=S_{m-1}(t_{m-1,r}).
$$
The induction hypothesis \eqref{eq:inter1} says that
$t_{m-1,r}$ lies between the $(r-1)$-st and $r$-th zeros of
$S_{m-1}$, with the evident interpretation when $r=1$.
Since $S_{m-1}$ has positive leading coefficient, exactly
$m-r$ of its zeros lie to the right of $t_{m-1,r}$.  Hence
\begin{equation}\label{eq:S-sign-table}
\operatorname{sgn}S_m(t_{m-1,r})
 =\operatorname{sgn}S_{m-1}(t_{m-1,r})
 =(-1)^{m-r}.
\end{equation}
For $r=1$ this is opposite to the sign $(-1)^m$ of $S_m$ at
$-\infty$.  Consecutive values in
\eqref{eq:S-sign-table} have opposite signs, and the final value
$S_m(t_{m-1,m-1})$ is negative.  Finally,
\eqref{eq:ST-positive} gives $S_m(-m)>0$.  The intermediate-value
theorem therefore produces one zero in each of the $m$ disjoint
intervals
$$
 \begin{gathered}
 (-\infty,t_{m-1,1}),\\[7pt]
 (t_{m-1,r},t_{m-1,r+1}),\quad 1\leq r\leq m-2,\\[7pt]
 (t_{m-1,m-1},-m).
 \end{gathered}
$$
Since $\deg S_m=m$, these are all its zeros and each is simple.
Their locations prove \eqref{eq:inter2}.

If $s_{m,r}$ is a zero of $S_m$, then \eqref{eq:rec2} gives
\begin{equation}\label{eq:T-at-S}
 T_m(s_{m,r})
 =\frac{2s_{m,r}+2m-1}
 {(2m+1)(s_{m,r}+m)}T_{m-1}(s_{m,r}).
\end{equation}
The quotient is positive: indeed $s_{m,r}<-m$, so both
$2s_{m,r}+2m-1$ and $s_{m,r}+m$ are negative.  By
\eqref{eq:inter2}, precisely $m-r$ zeros of $T_{m-1}$ lie to
the right of $s_{m,r}$.  Since $T_{m-1}$ has positive leading
coefficient,
\begin{equation}\label{eq:T-sign-table}
 \operatorname{sgn}T_m(s_{m,r})
 =\operatorname{sgn}T_{m-1}(s_{m,r})
 =(-1)^{m-r}.
\end{equation}
The first sign is opposite to the sign $(-1)^m$ of $T_m$ at
$-\infty$, and consecutive signs alternate.  Hence $T_m$ has one
zero in $(-\infty,s_{m,1})$, and one in each
$(s_{m,r},s_{m,r+1}),\quad 1\leq r\leq m-1$.  These $m$
distinct zeros exhaust its degree and are simple, proving
\eqref{eq:inter1}.  Moreover, $T_m(\lambda)>0$ for
$\lambda\geq-m-1$ by \eqref{eq:ST-positive}; hence every one of
these zeros is strictly smaller than $-m-1$.  This closes the
induction.

Lastly, \eqref{eq:Pi-factor} expresses $\Pi_{n,y}$ as $S_m$ or
$T_m$ multiplied by a rising factorial.  The zeros supplied by the
rising factorials are the real integers
$-1,\ldots,-(m-1)$ in the even case and
$-1,\ldots,-m$ in the odd case.  All remaining zeros are real by
the preceding argument, which proves the proposition.
\end{proof}

The reality of these zeros also follows from classical discrete
orthogonality. With $c=(1+y^2)^{-1}$, the sums
\eqref{eq:S}--\eqref{eq:T} give
$$
 S_m(\lambda)=\frac{M_m(-\lambda-m;1/2,c)}{m!},\quad
 T_m(\lambda)=\frac{2y\,M_m(-\lambda-m-1;3/2,c)}{m!},
$$
where $M_m$ is the Meixner polynomial in
\cite[Eq.~(18.20.7)]{DLMF}. Indeed, substitute
$(2k)!=4^kk!(1/2)_k$, $(2k+1)!=4^kk!(3/2)_k$, and
$m!/(m-k)!=(-1)^k(-m)_k$ in those sums.
Since $0<c<1$, the weights $(\beta)_r c^r/r!$, $r=0,1,\ldots$,
are positive for $\beta=1/2$ and $\beta=3/2$; see
\cite[Table~18.19.1]{DLMF}. The zeros of $M_m$ are therefore simple
and positive by \cite[Theorem~3.3.1, pp.~44--46]{Szego}.
Thus the zeros of $S_m$ lie below $-m$, and those of $T_m$ below
$-m-1$. Together with \eqref{eq:Pi-factor}, this gives another
proof of Proposition~\ref{thm:parameter-hyp}. The preceding
argument also establishes the two coupled interlacings.

\section{Geometry of the analytically continued zeros}
\label{sec:geometry}

For a branch initially positive on $(-1/2,\infty)$, let
$z_{n,j}(\lambda)$ denote its continuation to $\Hh$.
Proposition~\ref{prop:disc} and Remark~\ref{rem:degree-loss} show that
all collision and degree-loss parameters are real. Hence the
continuation exists throughout $\Hh$, and no two zero branches
collide there.

\begin{lemma}\label{lem:right-half}
For every positive zero branch,
$$
 \Rea z_{n,j}(\lambda)>0,\quad \lambda\in\Hh.
$$
\end{lemma}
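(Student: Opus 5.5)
Since $\Hh$ is connected and the continued branch $z_{n,j}$ is holomorphic, hence continuous, there, the natural route is a continuity (no-crossing) argument. I will show that $\Rea z_{n,j}(\lambda)$ never vanishes on $\Hh$, and then check that it is positive at a single point, or in a limit.

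\emph{Step 1: the imaginary axis is never reached.} Suppose that $\lambda_0\in\Hh$ and $\Rea z_{n,j}(\lambda_0)=0$. If $z_{n,j}(\lambda_0)=\ii y$ with $y>0$, then $C_n^{\lambda_0}(\ii y)=0$. Since $\lambda_0\neq0$, this gives $\Pi_{n,y}(\lambda_0)=0$, and Proposition~\ref{thm:parameter-hyp} forces $\lambda_0\in\R$, a contradiction. If $y<0$, the parity $C_n^\lambda(-x)=(-1)^nC_n^\lambda(x)$ reduces this to the case $y>0$. If $y=0$, then $0$ is a zero of $C_n^{\lambda_0}$. For odd $n$ this is the permanent zero $x=0$. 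That zero is its own branch and cannot coincide with $z_{n,j}$ unless the two collide, and Proposition~\ref{prop:disc} and Remark~\ref{rem:degree-loss} exclude collisions in $\Hh$. For even $n=2m$, by \eqref{eq:gegen-sum} the value $C_{2m}^\lambda(0)$ is a nonzero constant times $(\lambda)_m$, whose zeros are real. Either way we reach a contradiction. Hence $\Rea z_{n,j}$ has no zero on $\Hh$.

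\emph{Step 2: the sign is fixed by the boundary.} By connectedness, $\Rea z_{n,j}$ has one constant sign on $\Hh$. To identify it, fix a real $\lambda_1>-1/2$ with $\lambda_1\neq0$. There $z_{n,j}(\lambda_1)>0$ is a simple zero, so the implicit-function theorem gives a holomorphic branch on a disc around $\lambda_1$. On the part of that disc lying in $\Hh$, this branch agrees with the continuation, because the continuation is defined from the positive real branch. Continuity then gives $\Rea z_{n,j}(\lambda_1+\ii\varepsilon)>0$ for small $\varepsilon>0$, and so positivity holds everywhere on $\Hh$.

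The step needing most care is the bookkeeping in Step 1 that excludes $x=0$ and ensures the continuation is single-valued. Single-valuedness follows because $\Hh$ is simply connected and contains no discriminant or degree-loss points. I would also double-check that the hypothesis $n\geq2$ of Proposition~\ref{thm:parameter-hyp} covers every case with a positive zero, which it does, since a positive zero requires $n\geq2$.
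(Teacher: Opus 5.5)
Your proposal is correct and follows essentially the same route as the paper: you rule out $\Rea z_{n,j}=0$ with $y\neq0$ by parameter hyperbolicity, using parity for $y<0$, and you handle $y=0$ separately, via the real zeros of $(\lambda)_{n/2}$ in even degree and the impossibility of colliding with the central zero in $\Hh$ in odd degree. You then fix the sign by positivity near the orthogonality interval and connectedness of $\Hh$; your Step~2 simply writes out the local implicit-function argument that the paper compresses into one sentence.
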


\begin{proof}
Suppose first that $\Rea z_{n,j}(\lambda)=0$ and
$z_{n,j}(\lambda)=\ii y$, $y\neq0$. By parity, the cases $y>0$
and $y<0$ are equivalent. Equation \eqref{eq:Pi} would give a
nonreal zero $\lambda$ of $\Pi_{n,|y|}$, contradicting
Proposition~\ref{thm:parameter-hyp}.

It remains to exclude $y=0$. If $n$ is even, then
$$
 C_n^\lambda(0)=
 \frac{(-1)^{n/2}(\lambda)_{n/2}}{(n/2)!},
$$
whose zeros in $\lambda$ are real. If $n$ is odd, $x=0$ is the
permanent central zero branch, and a positive branch could reach it
only in a collision. Such a collision is impossible in $\Hh$.
Thus the real part never vanishes. It is positive near the
orthogonality interval and therefore positive throughout the connected
upper half-plane.
\end{proof}

\begin{lemma}[Local splitting at $x=1$]\label{lem:local-splitting}
Let
$$
 \lambda_k=\frac12-k,\quad
 1\leq k\leq\left\lfloor\frac n2\right\rfloor.
$$
If $\delta=\lambda-\lambda_k$ and $u=x-1$, the $k$ local
zero sheets meeting at $x=1$ satisfy
\begin{equation}\label{eq:local-splitting}
 u^k=(-1)^k\kappa_{n,k}\delta+
 O(\delta u,\delta^2,u^{k+1}),\quad
 \kappa_{n,k}
 =\frac{2^kk!(k-1)!(n-2k)!}{n!}>0.
\end{equation}
More precisely, put $\delta=v^k$. For each of the $k$ distinct
numbers $w_0$ satisfying
$$
 w_0^k=(-1)^k\kappa_{n,k},
$$
there is a unique function $w(v)$, holomorphic near $v=0$, such
that
$$
 w(0)=w_0,\quad u=vw(v)
$$
parametrises one of the local sheets. These $k$ parametrisations
exhaust the sheets meeting at $x=1$, and on every fixed sector in
the punctured $\delta$-plane they give the convergent expansions
$$
 u=w_0\delta^{1/k}+O(\delta^{2/k}).
$$
\end{lemma}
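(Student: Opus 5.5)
The plan is to expand $F(\delta,u):=C_n^{\lambda_k+\delta}(1+u)$ as a convergent double power series at $(0,0)$. I will identify its two lowest-order monomials, $\delta^1u^0$ and $\delta^0u^k$, and then run the Newton-polygon substitution $\delta=v^k$, $u=vw$ with the implicit-function theorem. This is the same scheme already used for the degree-loss points in Remark~\ref{rem:degree-loss}, and exhaustion will again follow from Rouché's theorem. The function $F$ is a polynomial in $(\delta,u)$, so every estimate is convergent.

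\emph{Step 1: the $\delta=0$ slice.} By Proposition~\ref{prop:factor},
$$
 F(0,u)=c_{n,k}(-u)^k(2+u)^kC_{n-2k}^{k+1/2}(1+u).
$$
Hence the coefficient of $u^j$ vanishes for $j<k$, and the coefficient of $u^k$ is
$$
 B:=c_{n,k}(-2)^kC_{n-2k}^{k+1/2}(1)
 =c_{n,k}(-2)^k\frac{(2k+1)_{n-2k}}{(n-2k)!}\neq0.
$$
The constant $c_{n,k}$ is computed by comparing leading coefficients in \eqref{eq:factor}:
$$
 \frac{2^n(1/2-k)_n}{n!}
 =c_{n,k}(-1)^k\frac{2^{n-2k}(k+1/2)_{n-2k}}{(n-2k)!}.
$$

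\emph{Step 2: the $u=0$ slice.} Here $F(\delta,0)=(1-2k+2\delta)_n/n!$. Among the factors $1-2k+i$, $0\le i\le n-1$, exactly one vanishes, namely $i=2k-1$; it lies in the range because $n\ge2k$. Therefore $F(\delta,0)=A\delta+O(\delta^2)$ with
$$
 A=\frac{2}{n!}\prod_{i\ne 2k-1}(1-2k+i)
 =\frac{2(-1)^{2k-1}(2k-1)!\,(n-2k)!}{n!}\neq0.
$$
All remaining monomials $\delta^a u^b$ with $a\ge1,b\ge1$ are absorbed in $O(\delta u)$, and those with $a\ge2$ in $O(\delta^2)$. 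This gives
$$
 F=A\delta+Bu^k+O(\delta u,\delta^2,u^{k+1}),
$$
that is, \eqref{eq:local-splitting} with $(-1)^k\kappa_{n,k}=-A/B$. I expect the main obstacle to be the bookkeeping in this step: simplifying $-A/B$ to the stated closed form $\kappa_{n,k}=2^kk!(k-1)!(n-2k)!/n!$, checking its positivity, and getting the sign $(-1)^k$ right. The ingredients are the Pochhammer ratio $(1/2-k)_n/(k+1/2)_{n-2k}=(1/2-k)_{2k}$, the identity $(2k+1)_{n-2k}=n!/(2k)!$, and the relation $(1/2-k)_{2k}=(-1)^k\bigl((2k)!/4^k k!\bigr)^2\cdot 4^k/(2k)!$-type double-factorial rewritings. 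I would verify the final formula against the case $k=1$, $n=2$ directly.

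\emph{Step 3: sheets and exhaustion.} Substitute $\delta=v^k$, $u=vw$. Every term of $F(v^k,vw)$ is divisible by $v^k$, because $\delta u=O(v^{k+1})$, $\delta^2=O(v^{2k})$ and $u^{k+1}=O(v^{k+1})$. Then $G(v,w):=v^{-k}F(v^k,vw)$ is holomorphic near $v=0$ with $G(0,w)=A+Bw^k$. This polynomial has $k$ distinct nonzero simple roots $w_0$, with $w_0^k=(-1)^k\kappa_{n,k}$. The holomorphic implicit-function theorem gives a unique holomorphic $w(v)$ with $w(0)=w_0$, and hence $u=w_0v+O(v^2)$.

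For exhaustion, note that $F(0,\cdot)$ has a zero of exact order $k$ at $u=0$. Choosing a small circle $|u|=\rho$ on which $F(0,\cdot)$ has no zeros, Rouché's theorem gives exactly $k$ zeros inside for small $|\delta|$. The $k$ branches obtained from a fixed choice of $v=\delta^{1/k}$ are distinct, since their $w_0$ differ, so they account for all of these zeros. On a fixed sector one takes a single holomorphic determination of $\delta^{1/k}$, which yields $u=w_0\delta^{1/k}+O(\delta^{2/k})$.
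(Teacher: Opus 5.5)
Your proposal is correct and is essentially the paper's argument: the same two extreme monomials $\delta$ and $u^k$, the substitution $\delta=v^k$, $u=vw$ with the implicit-function theorem, and Rouché's theorem using the exact multiplicity $k$ from Proposition~\ref{prop:factor}. The only difference is that you get the $u^k$-coefficient from the factorisation (via $c_{n,k}$ and $C_{n-2k}^{k+1/2}(1)$) and the $\delta$-coefficient from $C_n^\lambda(1)=(2\lambda)_n/n!$, whereas the paper reads both off the explicit Taylor coefficients $A_j(\lambda)$ at $x=1$; both routes give $A=-2(2k-1)!(n-2k)!/n!$ and $B=(-1)^k(2k)!/(2^k(k!)^2)$.

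One correction to your bookkeeping list: the identity you wrote for $(1/2-k)_{2k}$ has a spurious factor $4^k/(2k)!$ (for $k=1$ it gives $-1/2$ instead of $-1/4$). The correct form is $(1/2-k)_{2k}=(-1)^k\bigl((2k)!/(4^kk!)\bigr)^2$, and with it $-A/B$ simplifies exactly to $(-1)^k\kappa_{n,k}$.
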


\begin{proof}
The Taylor coefficient of $(x-1)^j$ in $C_n^\lambda(x)$ is
$$
 A_j(\lambda)=
 \frac{2^j(\lambda)_j}{j!}
 \frac{(2\lambda+2j)_{n-j}}{(n-j)!}.
$$
Indeed, iterate the derivative identity
$\dd C_r^\mu/\dd x=2\mu C_{r-1}^{\mu+1}$, recorded in
\cite[Eq.~(4.7.14), p.~81]{Szego}, and then use the value at $x=1$.
At $\lambda=\lambda_k$,
$$
 A_0'(\lambda_k)
 =-\frac{2(2k-1)!(n-2k)!}{n!},\quad
 A_k(\lambda_k)
 =(-1)^k\frac{(2k)!}{2^k(k!)^2},
$$
whereas $A_j(\lambda_k)=0$ for $0\leq j<k$, with
$A_0'(\lambda_k)\neq0$. Hence
$$
 0=A_0'(\lambda_k)\delta+A_k(\lambda_k)u^k
 +O(\delta u,\delta^2,u^{k+1}).
$$
Dividing by $A_k(\lambda_k)$ gives
\eqref{eq:local-splitting}.

It remains to justify that this balance gives all the actual local
sheets. The left-hand side before division is a convergent Taylor
series in $(\delta,u)$. Substitute $\delta=v^k$ and $u=vw$.
Every monomial is divisible by $v^k$, and division gives a function
$Q(v,w)$, holomorphic near $v=0$, with
$$
 Q(0,w)=A_0'(\lambda_k)+A_k(\lambda_k)w^k.
$$
The $k$ zeros of this last polynomial are simple. At each of them,
$$
 \frac{\partial Q}{\partial w}(0,w_0)
 =kA_k(\lambda_k)w_0^{k-1}\neq0.
$$
The holomorphic implicit-function theorem therefore gives one and
only one solution $w(v)=w_0+O(v)$ through each $(0,w_0)$.
To verify exhaustion without assuming an a priori rate for $u$,
choose $\rho>0$ so small that $u=0$ is the only zero of
$C_n^{\lambda_k}(1+u)$ in $|u|\leq\rho$. Its multiplicity is
exactly $k$, by Proposition~\ref{prop:factor}. Hence
$$
 M_\rho:=
 \min_{|u|=\rho}|C_n^{\lambda_k}(1+u)|>0.
$$
Uniform convergence on this circle gives, for all sufficiently small
$|\delta|$,
$$
 |C_n^{\lambda_k+\delta}(1+u)
   -C_n^{\lambda_k}(1+u)|<M_\rho,
 \quad |u|=\rho.
$$
Rouché's theorem
\cite[Chapter~4, Section~5.2, p.~153]{Ahlfors} therefore gives
exactly $k$ zeros in $|u|<\rho$, counted with multiplicity. The
$k$ solutions just constructed are distinct for $v\neq0$ small,
so they account for all of them. This proves the asserted exhaustion
and convergence.
\end{proof}

\begin{lemma}[Identification of the colliding sheets]
\label{lem:branch-labels}
Continue the positive branches $z_{n,1},\ldots,
z_{n,\lfloor n/2\rfloor}$ from $\lambda>-1/2$ through the upper
half-plane. At
$$
 \lambda_k=\frac12-k,\quad
 1\leq k\leq\left\lfloor\frac n2\right\rfloor,
$$
exactly the branches $z_{n,1},\ldots,z_{n,k}$ meet at $x=1$.
Every branch $z_{n,j}$, $j>k$, is regular there and tends to the
$(j-k)$-th positive zero, in decreasing order, of
$C_{n-2k}^{k+1/2}$.
\end{lemma}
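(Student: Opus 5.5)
The plan is an induction on $k$ that follows the positive branches along the real axis, approached from above, past the successive discriminant points. Everything is organised around two facts. First, near a simple root of the limiting polynomial there is exactly one zero sheet. Second, by Lemma~\ref{lem:right-half} every limit of a positive branch from $\Hh$ has nonnegative real part.

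We begin with the location of the singular points. By Proposition~\ref{prop:disc}, the finite collision points are $\lambda_2,\dots,\lambda_{\lfloor n/2\rfloor}$. By Remark~\ref{rem:degree-loss}, the degree-loss points are negative integers $\le-\lceil n/2\rceil<\lambda_{\lfloor n/2\rfloor}$. So on each open real interval $I_k=(\lambda_k,\lambda_{k-1})$, with $\lambda_0:=+\infty$, the real polynomial $C_n^\lambda$ has constant degree and simple roots. Its roots then move real-analytically along $I_k$, and nonreal roots come in conjugate pairs.

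The inductive hypothesis at stage $k$ has two parts. The boundary values from $\Hh$ of $z_{n,k},z_{n,k+1},\dots,z_{n,\lfloor n/2\rfloor}$ on $I_k$ are real. They also satisfy $1>z_{n,k}>\cdots>z_{n,\lfloor n/2\rfloor}>0$ there.

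For $k=1$ this is the orthogonality range. For the passage from $k-1$ to $k$, note that by the inductive conclusion at $\lambda_{k-1}$ the branches $z_{n,j}$, $j\ge k$, are regular there. So their continuation through $\Hh$ coincides with the real-analytic continuation across $\lambda_{k-1}$ along the real axis. Realness and ordering persist because no collision occurs inside $I_k$. A real root cannot reach $\pm1$ inside $I_k$, since $C_n^\lambda(1)=(2\lambda)_n/n!$ vanishes only at half-integers and integers, and the relevant ones are the $\lambda_j$ or lie outside $I_k$. It cannot reach $0$ either: for even $n$ one has $C_n^\lambda(0)\ne0$ there, and for odd $n$ the root $0$ is the simple central branch. (For $k=1$, $\lambda_1=-1/2$ is not a collision point of the monic family, and Lemma~\ref{lem:local-splitting} with $k=1$ covers it.)

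Next I would take limits as $\lambda\to\lambda_k$ from $\Hh$. The branches are bounded near $\lambda_k$ by the Cauchy bound, since there is no degree loss nearby. Their limits are roots of $C_n^{\lambda_k}$, which by Proposition~\ref{prop:factor} are $\pm1$, each with multiplicity $k$, together with the $n-2k$ simple residual roots. The roots of $C_n^{\lambda_k}$ in $[0,1]$ are therefore $1$ and the $\lfloor n/2\rfloor-k$ positive residual roots, plus $0$ when $n$ is odd. For odd $n$, however, $0$ is simple and already occupied by the central branch.

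By the implicit-function theorem, each simple residual root is the limit of exactly one sheet. Now consider the $\lfloor n/2\rfloor-k+1$ real branches $z_{n,k},\dots,z_{n,\lfloor n/2\rfloor}$, which take values in $(0,1)$ on $I_k$. Their limits must be the $\lfloor n/2\rfloor-k$ positive residual roots together with $1$, each residual root used once. By Lemma~\ref{lem:local-splitting}, for small real $\delta>0$ only one of the $k$ sheets at $x=1$ is real and lies below $1$, namely the one with $w_0=-\kappa_{n,k}^{1/k}$. Hence exactly one of these real branches tends to $1$, and by the ordering it is the largest, $z_{n,k}$.

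Order preservation then forces $z_{n,j}$, for $j>k$, to tend to the $(j-k)$-th positive residual zero of $C^{k+1/2}_{n-2k}$. These limits are simple roots, so these branches are regular at $\lambda_k$, which is the input needed for the next stage.

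The remaining branches $z_{n,1},\dots,z_{n,k-1}$ are settled by exclusion. Their limits have nonnegative real part, by Lemma~\ref{lem:right-half}. They cannot be a positive residual root or $0$, since each of those simple roots already carries its unique sheet. They cannot be a negative residual root or $-1$, since those have negative real part. So they tend to $1$. Together with $z_{n,k}$ this gives exactly $k$ branches at $x=1$, matching the multiplicity $k$ from Lemma~\ref{lem:local-splitting}.

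The step I expect to need the most care is the identification of boundary values on $I_k$. One must ensure that continuation through $\Hh$ to $I_k$ agrees with real continuation for the regular branches, and that the single real sheet near $1$ is attached to the largest real branch rather than to a complex one. I would handle this with the homotopy and monodromy remarks of Section~\ref{sec:tools}, applied to a small upper semicircle around $\lambda_{k-1}$, combined with the explicit local parametrisation $u=vw(v)$ of Lemma~\ref{lem:local-splitting} for real $v>0$.
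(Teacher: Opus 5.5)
Your proof is correct. It shares the paper's skeleton: induction over $I_k=(\lambda_k,\lambda_{k-1})$, persistence of reality and order of the regular branches on $I_k$, and Proposition~\ref{prop:factor} at $\lambda_k$. It treats the colliding branches in the opposite order and by a different mechanism.

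The paper first follows $z_{n,1},\dots,z_{n,k-1}$ across $I_k$. Lemma~\ref{lem:local-splitting} at $\lambda_{k-1}$ makes each of them either nonreal or real and larger than $1$ there. A Rouch\'e argument makes the cluster conjugation-invariant, so a nonreal branch cannot tend to a simple residual zero, and a real branch cannot cross $x=1$. Only then is $z_{n,k}$ placed in the single remaining slot of the $k$-fold zero at $x=1$.

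You instead place the real branches $z_{n,k},\dots,z_{n,M}$ first. You use the real-sheet count from the parametrisation $u=vw(v)$: for $\delta>0$ exactly one local sheet is real and below $1$. Combined with pigeonhole and ordering, this forces $z_{n,k}\to1$ and the rest onto the residual zeros in order. After that, every zero of nonnegative real part other than $1$ is a simple zero already carrying a branch. So Lemma~\ref{lem:right-half} and uniqueness at simple zeros settle $z_{n,1},\dots,z_{n,k-1}$ directly from $\Hh$.

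This is shorter. It dispenses with the left-hand analysis at $\lambda_{k-1}$ and with the conjugate-cluster argument. What it gives up is the description of how the colliding branches behave on $I_k$, which the statement does not require.

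When writing it up, make two routine points explicit:
\begin{enumerate}
\item The limit from $\Hh$ at $\lambda_k$ exists, because the cluster set of a bounded branch there is a connected subset of the finite zero set of $C_n^{\lambda_k}$.
\item Distinct continued branches never coincide at a point of $\Hh$, since the identity theorem at a simple zero would merge them.
\end{enumerate}

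There is also one small correction. Work throughout with the reduced family of Remark~\ref{rem:degree-loss}, not with $C_n^\lambda$ itself. At integer points of $I_k$, e.g.\ $\lambda=-1\in I_2$ for $n\geq4$, the unreduced $C_n^\lambda$ vanishes identically. So three of your claims are false as written there:
\begin{enumerate}
\item that $C_n^\lambda$ has constant degree and simple roots on $I_k$;
\item that $C_n^\lambda(1)$ vanishes in $I_k$ only at excluded points;
\item that $C_n^\lambda(0)\neq0$ on $I_k$ for even $n$.
\end{enumerate}
All three become true after division by $(\lambda)_m$ or $(\lambda)_{m+1}$, by the formulas displayed in the paper.
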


\begin{proof}
The assertion requires more than counting the positive zeros in
Proposition~\ref{prop:factor}, because after the first collision some
of the continued sheets are no longer real. We therefore follow the
sheets inductively.

Put $M=\lfloor n/2\rfloor$. For $k=1$, all positive zeros are
real and ordered on $(-1/2,\infty)$. Proposition~\ref{prop:factor}
at $\lambda_1=-1/2$ shows that $x=1$ is a simple zero and that
the residual factor has exactly $M-1$ positive simple zeros in
$(0,1)$. Order and continuity therefore give
$$
 z_{n,1}(\lambda)\longrightarrow1
$$
as $\lambda\downarrow\lambda_1$, whilst the other positive
branches tend, in order, to the positive zeros of the residual
factor.

Assume the assertion through $k-1$, and consider the real interval
$$
 I_k=(\lambda_k,\lambda_{k-1}).
$$
After cancellation of the common normalisation factors, the
discriminant formula \eqref{eq:disc} has neither a zero nor a pole in
$I_k$. Thus every boundary sheet is simple throughout this
interval.

We first follow the branches $z_{n,1},\ldots,z_{n,k-1}$. By the
induction hypothesis they meet at $x=1$ when
$\lambda=\lambda_{k-1}$. Apply
Lemma~\ref{lem:local-splitting} with $k-1$. On the left of that
point the leading equation has no negative real direction: if
$k-1$ is even, all its real directions disappear; if $k-1$ is
odd, its only real direction is $u>0$. Consequently each of these
branches is either nonreal on $I_k$, or is real and initially
larger than $1$. A nonreal branch cannot become real inside
$I_k$, since it would then collide with its conjugate. A real
branch cannot cross $x=1$ there. Indeed, for the reduced
polynomials,
$$
 \frac{C_{2m}^{\lambda}(1)}{(\lambda)_m}
 =\frac{2^{2m}(\lambda+1/2)_m}{(2m)!},\quad
 \frac{C_{2m+1}^{\lambda}(1)}{(\lambda)_{m+1}}
 =\frac{2^{2m+1}(\lambda+1/2)_m}{(2m+1)!},
$$
and neither expression vanishes in $I_k$.

At $\lambda_k$, Proposition~\ref{prop:factor} says that every
finite zero is real, the residual zeros are simple and belong to
$(-1,1)$, and $x=1$ has multiplicity $k$. Since
$\lambda_k$ is a half-integer, the leading coefficient is nonzero
there; hence every branch has a finite limit as
$\lambda\downarrow\lambda_k$. If one of the
nonreal branches just described tended to a simple residual zero,
its conjugate branch would tend to the same real zero, contradicting
simplicity. Here the conjugate branch is not being introduced as an
unlabelled extra sheet. Immediately to the left of
$\lambda_{k-1}$, choose a closed disc about $x=1$ which contains
no residual zero at the collision. Let $P(\lambda,x)$ be the
appropriate reduced polynomial from Remark~\ref{rem:degree-loss}, and
let $\Gamma$ be the boundary circle. Then
$$
 M_\Gamma:=
 \min_{x\in\Gamma}|P(\lambda_{k-1},x)|>0.
$$
For real $\lambda<\lambda_{k-1}$ sufficiently close to that point,
uniform convergence on $\Gamma$ gives
$$
 |P(\lambda,x)-P(\lambda_{k-1},x)|<M_\Gamma,
 \quad x\in\Gamma.
$$
Rouché's theorem
\cite[Chapter~4, Section~5.2, p.~153]{Ahlfors} therefore gives
exactly $k-1$ roots in the disc, counted with multiplicity. This
finite cluster is invariant under conjugation because the parameter
and the polynomial coefficients are real. Uniqueness of continuation
and the absence of collisions on $I_k$ preserve that invariance
after the roots leave the fixed disc. Hence the conjugate of any
nonreal branch in the cluster is a distinct branch in the same
cluster. Thus the two branches tending to the same residual zero
would indeed be distinct branches of the family under consideration.
Lemma~\ref{lem:right-half} also excludes a limit with negative real
part, and the simple central zero in odd degree cannot receive a
conjugate pair. A real branch larger than $1$ cannot tend to a
residual zero without crossing $x=1$. Hence all the first
$k-1$ branches tend to $1$.

The branches $z_{n,k},\ldots,z_{n,M}$ tend at
$\lambda_{k-1}$ to distinct positive zeros of the residual factor.
They therefore continue as simple roots throughout $I_k$. To see
explicitly that they remain real, let $z$ be the local holomorphic
germ of one such root at a real parameter $\lambda_0\in I_k$. Since
the polynomial coefficients are real there,
$$
 \widetilde z(\lambda)
 :=\overline{z(\overline\lambda)}
$$
is another holomorphic zero germ and
$\widetilde z(\lambda_0)=z(\lambda_0)$. Uniqueness in the
holomorphic implicit-function theorem gives $\widetilde z=z$.
Hence the branch is real on the real axis near $\lambda_0$; by
continuation it cannot leave the real axis before a collision. As the
discriminant does not vanish on $I_k$, these branches remain real
throughout that interval, and their order cannot change. None can
cross the origin. In even degree,
a zero at the origin would have even multiplicity by parity; in odd
degree it would collide with the permanent simple central zero.
Either event would contradict the absence of a discriminant zero in
$I_k$. Thus these branches remain positive.

At $\lambda_k$ there remains one place at $x=1$ and exactly
$M-k$ positive residual zeros. The largest of the remaining real
branches, namely $z_{n,k}$, must tend to $1$: if a lower branch
had that limit, preservation of order would force $z_{n,k}$ to
have a limit at least $1$, whereas all the unused residual zeros
are strictly smaller than $1$. Lemma~\ref{lem:right-half} excludes
a negative limiting zero. In even degree, the factorisation at
$\lambda_k$ has no zero at the origin. In odd degree, the origin
is a simple zero there, so a noncentral branch cannot tend to it:
the permanent root and a second root would have the same limit,
contradicting local uniqueness. The other branches must therefore tend, in order, to the
$M-k$ positive residual zeros. This closes the induction.
\end{proof}

The slit plane
$$
 \Omega:=\C\setminus(-\infty,-1/2]
$$
is simply connected. It is important to exclude coefficient poles as
well as discriminant zeros. For $n=2m$, use the reduced family
$C_{2m}^{\lambda}/(\lambda)_m$; for $n=2m+1$, use
$C_{2m+1}^{\lambda}/(\lambda)_{m+1}$. The coefficient formula in
Remark~\ref{rem:degree-loss} shows that both families are polynomial
in $(\lambda,x)$. Their leading coefficients can vanish only at
the negative integers listed there, all of which lie on the removed
ray. Proposition~\ref{prop:disc} likewise places every discriminant
zero on that ray. Hence the monic reduced family is holomorphic, has
constant degree, and has nonzero discriminant throughout $\Omega$.
The implicit-function theorem and the monodromy theorem recalled in
Section~\ref{sec:tools} now give every positive branch a
single-valued holomorphic continuation to $\Omega$.

\section{A boundary minimum principle}
\label{sec:boundary}

We shall use a version of the minimum principle adapted to algebraic
singularities on the boundary.

\begin{lemma}\label{lem:boundary}
Let $H$ be holomorphic in $\Hh$, and let $E\subset\R$ be
finite. Assume that $H$ extends holomorphically through every point
of $\R\setminus E$, and that, for each $a\in E$, the selected
branch is algebraic in a punctured upper half-disc about $a$. Thus
it has there a convergent Puiseux expansion on the chosen upper
branch. Suppose
$$
 \liminf_{\substack{z\to x\\[7pt]z\in\Hh}}\Ima H(z)\geq0,
 \quad x\in\R\setminus E.
$$
Assume that, for each $a\in E$,
$$
 H(z)=O(|z-a|^{-\alpha_a}),\quad \alpha_a<1,
$$
uniformly as $z$ approaches $a$ within that upper half-disc,
and assume that, in a neighbourhood of infinity, $H$ has the
convergent Laurent expansion
$$
 H(z)=h+\frac c z+\sum_{\nu\geq2}\frac{c_\nu}{z^\nu},
 \quad h,c,c_\nu\in\R,\quad c<0,
$$
Then
$$
 \Ima H(z)>0,\quad z\in\Hh.
$$
\end{lemma}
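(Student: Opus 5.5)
Write $u:=\Ima H$, which is harmonic on $\Hh$. The plan is to apply the minimum principle of Section~\ref{sec:tools} to $u+\varepsilon\phi$ on truncated domains, and then let $\varepsilon\downarrow0$. Here $\phi$ is a barrier that is positive on $\Hh$ and large near each $a\in E$.

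For each $a\in E$, fix a number $\beta_a$ with $\max(\alpha_a,0)<\beta_a<1$. The barrier is
$$
 \phi(z):=\sum_{a\in E}\Ima\bigl(-(z-a)^{-\beta_a}\bigr),
$$
using the principal branch on $\Hh$. On $\Hh$ we have $\arg(z-a)\in(0,\pi)$. Hence
$$
 \Ima\bigl(-(z-a)^{-\beta_a}\bigr)=|z-a|^{-\beta_a}\sin\bigl(\beta_a\arg(z-a)\bigr)>0 .
$$
This term is harmonic and continuous up to $\R\setminus\{a\}$, with nonnegative boundary values there. It also tends to $0$ at infinity like $|z|^{-\beta_a}$.

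Now fix $\varepsilon>0$, a small radius $r$ and a large radius $R$. Let $D_{r,R}$ be the upper half-disc of radius $R$ from which the upper half-discs of radius $r$ about the points of $E$ are removed. We check the boundary behaviour of $v_\varepsilon:=u+\varepsilon\phi$ piece by piece.
\begin{enumerate}[label=\textup{(\alph*)}]
\item On the real segments, the hypothesis gives $\liminf u\geq0$, and $\phi\geq0$ there, so $\liminf v_\varepsilon\geq0$.
\item On the small semicircle about $a$, the growth hypothesis gives $|u|\leq C|z-a|^{-\alpha_a}$. The barrier term is at least $|z-a|^{-\beta_a}\sin(\beta_a\theta)$, with $\theta=\arg(z-a)$.
\item On the large semicircle, write $z=Re^{\ii\theta}$. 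The Laurent expansion gives
$$
 \Ima H(z)=-c\,\frac{\sin\theta}{R}+O\Bigl(\frac{\sin\theta}{R^2}\Bigr),
$$
because $h$ and the $c_\nu$ are real, so each term $\Ima(c_\nu z^{-\nu})$ carries a factor $\sin(\nu\theta)$, bounded by $\nu\sin\theta$. Since $c<0$, this is $\geq0$ for all large $R$, and strictly positive for $0<\theta<\pi$.
\end{enumerate}

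The main obstacle is step (b), near the real endpoints $\theta\to0$ or $\theta\to\pi$. There the barrier degenerates like $\sin(\beta_a\theta)$, while the crude bound on $u$ does not. I would handle this by using the Puiseux expansion more precisely. Since $H$ is continuous up to $\R\setminus E$, and the expansion is in powers of $(z-a)^{1/q}$ with exponents bounded below by $-\alpha_a>-1$, the function $u$ on the circle $|z-a|=r$ is $r^{-\alpha_a}$ times a trigonometric expression in $\theta$. The boundary conditions at $\theta=0$ and $\theta=\pi$ then make the negative part of $u$ at most $C\,r^{-\alpha_a}\sin\theta$. This follows because each nonnegative-boundary term $\Ima\bigl(b\,(z-a)^{\gamma}\bigr)$ with real-direction constraints vanishes to first order at the endpoints. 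Alternatively, I would compare directly with $\Ima\bigl(-(z-a)^{-\beta_a}\bigr)\asymp r^{-\beta_a}\sin\theta$, which holds since $\beta_a<1$.

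Either way, $\varepsilon r^{-\beta_a}$ dominates $C r^{-\alpha_a}$ as $r\to0$, so $v_\varepsilon\geq0$ on the small arcs for small $r$. The minimum principle then gives $v_\varepsilon\geq0$ on $D_{r,R}$. Letting $r\to0$ and $R\to\infty$ gives $v_\varepsilon\geq0$ on all of $\Hh$, and letting $\varepsilon\to0$ gives $u\geq0$.

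Finally, $u$ is not identically zero, because on the large arc $u\sim -c\sin\theta/R>0$. The strong minimum principle for the harmonic function $u\geq0$ therefore yields $u>0$ throughout $\Hh$.
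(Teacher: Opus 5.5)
Your overall scheme is the paper's: you exhaust $\Hh$ by $D_{r,R}$, use a barrier that blows up at each $a\in E$, control the outer arc with the Laurent series and $|\sin\nu\theta|\le\nu\sin\theta$, let $r\to0$, $R\to\infty$, $\varepsilon\to0$, and finish with the strong minimum principle. Your barrier would also work, since $\sin(\beta_a\theta)\ge c_{\beta_a}\sin\theta$ on $[0,\pi]$; it plays the role of the paper's $\Ima(-1/(z-a))=\sin\theta/r$.

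The gap is in step (b), exactly where you locate the difficulty, because the reason you give is false. Individual Puiseux terms are not sign-constrained on the real axis and need not vanish at $\theta=0$ or $\theta=\pi$. For example, $H(a+\zeta)=\ii-\ii\zeta^{1/2}$ satisfies the local hypotheses, since $\Ima H(a+r)=1-r^{1/2}\ge0$ and $\Ima H(a-r)=1$. Yet its term $-\ii\zeta^{1/2}$ has imaginary part $-r^{1/2}\cos(\theta/2)$, which equals $-r^{1/2}$ at $\theta=0$. Only the full sum is constrained at the endpoints, and even the sum need not vanish there. Your fallback does not help either. It bounds the barrier from below, but the only bound you have on the negative part of $u$ is the crude one, $|u|\le Cr^{-\alpha_a}$, and that does not decay as $\theta\to0$ or $\theta\to\pi$.

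What is missing is an angular Lipschitz bound that is uniform in $r$; this is how the paper argues. Put $\Phi_r(\theta)=r^{\alpha_a}\Ima H(a+re^{\ii\theta})$ and differentiate the convergent Puiseux series termwise. The $\ell$-th term of $\Phi_r'$ is $O(r^{\alpha_a+\ell/N})$, and $\alpha_a+\ell/N\ge0$ because the growth hypothesis forces $\ell_0/N\ge-\alpha_a$. Hence $|\Phi_r'(\theta)|\le M_a$ for $0<r\le\rho_1$ and $0\le\theta\le\pi$. Since $\Phi_r(0)\ge0$ and $\Phi_r(\pi)\ge0$, the mean value theorem gives $\Phi_r(\theta)\ge-M_a\min\{\theta,\pi-\theta\}\ge-\tfrac{\pi}{2}M_a\sin\theta$. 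Your barrier then dominates as soon as $\varepsilon c_{\beta_a}r^{\alpha_a-\beta_a}\ge\tfrac{\pi}{2}M_a$.

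Alternatively, you can remove the obstacle rather than estimate through it by rotating the barrier. The function $\Rea\bigl(e^{\ii\beta_a\pi/2}(z-a)^{-\beta_a}\bigr)=|z-a|^{-\beta_a}\cos\bigl(\beta_a(\pi/2-\theta)\bigr)$ is harmonic and tends to $0$ at infinity. It is bounded below by $\cos(\beta_a\pi/2)|z-a|^{-\beta_a}>0$ on the whole closed semicircle, endpoints included. With it the crude growth bound suffices and no Puiseux expansion is needed; this is the barrier used in the paper's appendix version of the lemma.
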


\begin{proof}
We give the exhaustion argument in detail. Write
$$
 E=\{a_1,\ldots,a_N\}.
$$
Choose $R>0$ so large that all $a_j$ lie in $(-R,R)$, and
choose $r>0$ so small that the closed intervals
$[a_j-r,a_j+r]$ are disjoint. Let
$$
 D_{R,r}:=
 \{z\in\Hh:|z|<R,\ |z-a_j|>r,\ 1\leq j\leq N\}.
$$
This is a bounded domain. Its boundary has three types of pieces:

\begin{enumerate}[label=\textup{(\arabic*)}]
\item intervals on the real axis;
\item small upper semicircles $|z-a_j|=r$;
\item the large upper semicircle $|z|=R$.
\end{enumerate}

The real intervals are the easiest pieces. By hypothesis, the lower
limit of $\Ima H$ as one approaches any such interval from above is
nonnegative. Since the selected algebraic branch is regular at every
point of $\mathbb R\setminus E$, it extends holomorphically through
each of these intervals. Its upper boundary value is therefore
continuous there, and the stated lower-limit condition says precisely
that this boundary value is nonnegative.

We next construct a barrier for the small semicircles. For
$a\in E$, set
$$
 b_a(z):=\Ima\frac{-1}{z-a}
 =\frac{\Ima z}{|z-a|^2}>0
$$
in $\Hh$. If $z=a+re^{\ii\theta}$, $0<\theta<\pi$, then
$$
 b_a(z)=\frac{\sin\theta}{r}.
$$
We now derive, rather than assume, the angular estimate needed on
these semicircles. By \cite[Theorem~VII.7, p.~498]{Flajolet},
algebraicity gives a convergent fractional-power expansion on the
chosen upper branch:
$$
 H(a+\zeta)=\sum_{\ell=\ell_0}^{\infty}
 c_\ell\zeta^{\ell/N},\quad
 0<|\zeta|<\rho,\quad 0<\arg\zeta<\pi,
$$
for some integer $N\geq1$.  There are only finitely many negative
exponents.  The growth assumption implies
$\ell_0/N\geq-\alpha_a$. Choose $0<\rho_1<\rho$. For
$0<r\leq\rho_1$, define
$$
 \Phi_r(\theta):=
 r^{\alpha_a}\Ima H(a+re^{\ii\theta}),
 \quad 0\leq\theta\leq\pi.
$$
Termwise differentiation of the uniformly convergent Puiseux series
shows that
$$
 \sup_{0<r\leq\rho_1}\sup_{0\leq\theta\leq\pi}
 |\Phi_r'(\theta)|\leq M_a
$$
after $\rho_1$ has been decreased if necessary. Indeed, the
$\ell$-th differentiated term is bounded by a constant times
$r^{\alpha_a+\ell/N}$, and every exponent occurring here is
nonnegative.

At $\theta=0$ and $\theta=\pi$, the points $a+r$ and $a-r$
belong to regular real intervals when $r$ is small.  The boundary
hypothesis therefore gives
$\Phi_r(0)\geq0$ and $\Phi_r(\pi)\geq0$.  The mean-value theorem
now yields
$$
 \min\{\Phi_r(\theta),0\}
 \geq-M_a\min\{\theta,\pi-\theta\}.
$$
Since
$$
 \min\{\theta,\pi-\theta\}\leq\frac{\pi}{2}\sin\theta,
 \quad0\leq\theta\leq\pi,
$$
we obtain the required uniform estimate
$$
 \min\{\Ima H(a+re^{\ii\theta}),0\}
 \geq-C_ar^{-\alpha_a}\sin\theta.
$$

Since $\alpha_a<1$, the $r^{-1}$ barrier grows faster than the
possible negative part in the preceding estimate, uniformly all the way to the
endpoints.
More precisely, for each fixed $\varepsilon>0$, choose $r$ so
small that
$$
 \varepsilon r^{-1}\geq C_ar^{-\alpha_a},
\quad\text{or equivalently}\quad
 r^{1-\alpha_a}\leq\varepsilon/C_a.
$$
Then
$$
 u_\varepsilon(z):=
 \Ima H(z)+\varepsilon\sum_{a\in E}b_a(z)
$$
is nonnegative on every small semicircle once $r$ is sufficiently
small. The barriers centred at the other points remain bounded there
and do not affect this conclusion.

It remains to control the outer boundary. For
$z=Re^{\ii\theta}$,
$$
 \Ima H(z)
 =\frac{-c\sin\theta}{R}
 -\sum_{\nu\geq2}\frac{c_\nu\sin(\nu\theta)}{R^\nu}.
$$
There is $R_0>0$ such that the Laurent series converges for
$|z|>R_0$. Choose $R_1>R_0$. Standard convergence properties of
power series, applied in the variable $1/z$, show that the
termwise derivative converges absolutely on $|z|=R_1$. Hence
$$
 C:=\sum_{\nu\geq2}\nu|c_\nu|R_1^{\,2-\nu}<\infty.
$$
For every $R\geq R_1$, the elementary inequality
$|\sin(\nu\theta)|\leq\nu\sin\theta$ for
$0\leq\theta\leq\pi$ therefore gives
$$
 \left|
 \sum_{\nu\geq2}\frac{c_\nu\sin(\nu\theta)}{R^\nu}
 \right|
 \leq\frac{C\sin\theta}{R^2}.
$$
Consequently,
$$
 \Ima H(Re^{\ii\theta})
 \geq\sin\theta\left(\frac{-c}{R}-\frac C{R^2}\right).
$$
Because $c<0$, the last expression is nonnegative whenever
$R>C/(-c)$. The additional barrier terms are positive, so
$u_\varepsilon\geq0$ on the large semicircle as well.

We may now apply the harmonic minimum principle
\cite[Chapter~4, Section~6.2, Theorem~21, p.~166]{Ahlfors}
to $u_\varepsilon$ on
$D_{R,r}$. It yields
$$
 u_\varepsilon(z)\geq0,\quad z\in D_{R,r}.
$$
There is no hidden boundary-regularity assumption in this application.
After the preceding holomorphic continuations are used to assign the
upper boundary values on the real intervals, $u_\varepsilon$ is
continuous on the closure of $D_{R,r}$. The small and large
semicircles avoid $E$, and their endpoints lie on those regular
intervals.
Fix an arbitrary $z_0\in\Hh$. First choose $R>|z_0|$ as above,
then let $r\downarrow0$. The point $z_0$ remains in the domains,
and we obtain
$$
 \Ima H(z_0)+\varepsilon
 \sum_{a\in E}\Ima\frac{-1}{z_0-a}\geq0.
$$
Finally let $\varepsilon\downarrow0$. This proves
$\Ima H(z_0)\geq0$. Since $z_0$ was arbitrary,
$\Ima H\geq0$ throughout $\Hh$.

The function $H$ is not constant because its Laurent coefficient
$c$ is nonzero. If $\Ima H$ vanished at an interior point, the
strong minimum principle would force $\Ima H$ to vanish identically.
The Cauchy--Riemann equations would then give $H'=0$, so $H$ would
be constant. This contradiction proves $\Ima H>0$ in $\Hh$.
\end{proof}

\begin{remark}\label{rem:boundary-growth}
The reduced equation is polynomial in $(\lambda,x)$, so each zero
branch is an algebraic function of $\lambda$. For a noncentral
zero, write that equation as $Q(\lambda,x^2)=0$, where $Q$ has
degree $m=\lfloor n/2\rfloor$ in its second variable. Then
$H(\lambda)=\sqrt{\lambda+d}\,x(\lambda)$ satisfies
$$
 (\lambda+d)^m
 Q\left(\lambda,\frac{H(\lambda)^2}{\lambda+d}\right)=0.
$$
After the denominators are cleared, this is a nonzero polynomial
equation in $\lambda$ and $H$, so each scaled branch is algebraic
as well. The local expansion theorem
\cite[Theorem~VII.7, p.~498]{Flajolet} therefore
supplies the convergent local expansions required in
Lemma~\ref{lem:boundary}.
Remark~\ref{rem:degree-loss} implies that every divergent Gegenbauer
zero has boundary growth of order at most $1/2$. At a discriminant
point the colliding branches remain bounded. Each square-root factor
used below is locally bounded at every finite point and vanishes at
its own branch point. Multiplication by such a factor therefore
cannot raise a boundary growth exponent to $1$. Thus the local
hypothesis of Lemma~\ref{lem:boundary} holds in all applications.
\end{remark}

\section{The Ismail--Letessier--Askey scaling}
\label{sec:f}

Set
$$
 F_{n,j}(\lambda)
 :=\sqrt{\lambda+1}\,z_{n,j}(\lambda),
$$
where the square root is positive on $(-1,\infty)$.

\begin{theorem}\label{thm:F-pick}
For every $n\geq3$ and every positive zero branch,
$$
 \Ima F_{n,j}(\lambda)>0,\quad \lambda\in\Hh.
$$
\end{theorem}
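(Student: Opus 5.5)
We apply Lemma~\ref{lem:boundary} to $H=F_{n,j}$. By Section~\ref{sec:geometry}, $z_{n,j}$ continues holomorphically to $\Omega=\C\setminus(-\infty,-1/2]$, and $\sqrt{\lambda+1}$ is holomorphic on $\C\setminus(-\infty,-1]$. So $F_{n,j}$ is holomorphic on $\Hh$ and extends through every real point off a finite set $E$. That set consists of $-1$, the points $\lambda_k=1/2-k$ for $1\leq k\leq\lfloor n/2\rfloor$, and the degree-loss integers $-s$ of Remark~\ref{rem:degree-loss}. The local hypotheses of Lemma~\ref{lem:boundary} at points of $E$ are exactly what Remark~\ref{rem:boundary-growth} provides: $F_{n,j}$ is algebraic, and its growth exponent is at most $1/2<1$. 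What remains is to check the sign of the boundary values on $\R\setminus E$, and to compute the Laurent expansion at infinity, where we need the coefficient $c$ to be strictly negative.

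\emph{Boundary values, first half.} We split $\R\setminus E$ into three parts.
\begin{enumerate}[label=\textup{(\alph*)}]
\item On $(-1/2,\infty)$ both factors are real and positive, so $\Ima F_{n,j}=0$.
\item On $(-1,-1/2)$ the square root is still real and positive. This interval lies inside $I_2=(\lambda_2,\lambda_1)$. By Lemma~\ref{lem:local-splitting} with $k=1$, the branch $z_{n,1}$ leaves $x=1$ along the real direction $u>0$. The argument of Lemma~\ref{lem:branch-labels} shows that a real branch stays real on $I_2$ until the next collision. The remaining branches $z_{n,j}$, $j\geq2$, are simple there and also stay real. Hence every $z_{n,j}$ is real on $(-1,-1/2)$, and $\Ima F_{n,j}=0$.
\end{enumerate}
The hypothesis $n\geq3$ guarantees that $-1$ is not a degree-loss point, since $\lceil n/2\rceil\geq2$. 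For $n\geq 3$ the growth at $-1$ is therefore harmless.

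\emph{Boundary values, second half.}
\begin{enumerate}[label=\textup{(\alph*)}]\setcounter{enumi}{2}
\item On $(-\infty,-1)\setminus E$ the upper boundary value of the square root is $\sqrt{\lambda+1}=\ii\sqrt{|\lambda+1|}$. The upper boundary value of $z_{n,j}$ exists, because the branch is regular there, and it satisfies $\Rea z_{n,j}\geq0$ by continuity from Lemma~\ref{lem:right-half}. Therefore $\Ima F_{n,j}=\sqrt{|\lambda+1|}\,\Rea z_{n,j}\geq0$.
\end{enumerate}
So the $\liminf$ hypothesis of Lemma~\ref{lem:boundary} holds on all of $\R\setminus E$.

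\emph{Behaviour at infinity (the main obstacle).} Write $\lambda=1/\mu$ and $x=\sqrt{\mu}\,t$. As $\mu\to0$, the rescaled equation $\mu^{\,n/2}C_n^{1/\mu}(\sqrt{\mu}\,t)\to\mathrm{const}\cdot H_n(t/\sqrt2)$, and the limit has simple zeros. The implicit-function theorem in $(\mu,t^2)$ then shows that $F_{n,j}^2/(1+\mu)$ is holomorphic in $\mu$ near $0$, so $F_{n,j}$ has a convergent real Laurent expansion $h+c/\lambda+\cdots$ with $h=h_j/\sqrt2>0$, where $h_j$ is the corresponding Hermite zero. To find $c$, I would insert $x=\lambda^{-1/2}(\tau_0+\tau_1/\lambda+\cdots)$ into the explicit sum \eqref{eq:gegen-sum}, or more efficiently into the differential equation \eqref{eq:gegen-ode}. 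The first-order correction should be expressible through $h_j^2$ and $n$, presumably of the shape $\tau_1\propto-\tau_0(a_n+b\,h_j^2)$. The factor $\sqrt{1+1/\lambda}$ then contributes $+h/2$ to $c$. The crux is to show that the resulting $c$ is strictly negative for every Hermite zero once $n\geq3$. For $n=2$ it must vanish, because $F_{2,1}\equiv1/\sqrt2$; this identifies which term must dominate. If the explicit inequality proves awkward, I would use the known bound $h_j^2<2n+1$ on Hermite zeros, or Hermite zero-sum identities, to control the sign.

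With $c<0$ established, Lemma~\ref{lem:boundary} gives $\Ima F_{n,j}>0$ on $\Hh$.
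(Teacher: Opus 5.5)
Your route is the paper's: apply Lemma~\ref{lem:boundary} to $F_{n,j}$, check the boundary sign on $(-1/2,\infty)$, $(-1,-1/2)$ and $(-\infty,-1)$, and show that the $1/\lambda$ coefficient at infinity is negative. Parts (a)--(c) are fine. The paper gets reality on $(-1,-1/2)$ directly from the absence of discriminant zeros and poles there, which is equivalent to your argument.

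The genuine gap is the step you call the crux: you never compute $c$, you only guess its shape. Lemma~\ref{lem:boundary} needs $c<0$. This sign is exactly where $n\geq3$ enters, since $c=0$ for $n=2$, so without the precise constants the argument does not close.

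The missing input is a first-order perturbation of the scaled Gegenbauer equation. With the small parameter $t=(\lambda+d)^{-1}$ and $x=w\sqrt t$, the equation becomes
\[
 u''-2wu'+2nu+t\bigl\{-w^2u''+(2d-1)wu'+n(n-2d)u\bigr\}=0 .
\]
Write $u=H_n+tQ+O(t^2)$. Expand $L_1H_n$, where $L_1$ is the operator in braces, in terms of $H_{n-2}$ and $H_{n-4}$, using $H_n'=2nH_{n-1}$ and $wH_r=\tfrac12H_{r+1}+rH_{r-1}$. Then invert $L_0=\partial_w^2-2w\partial_w+2n$ via $L_0H_{n-2r}=4rH_{n-2r}$. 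Evaluating at a positive zero $h$ of $H_n$ gives
\[
 a_1=-\frac{Q(h)}{H_n'(h)}=\frac{h\{4d-(2h^2+2n-1)\}}{8},
\]
which is \eqref{eq:Wd-infty}. For $d=1$ this gives
\[
 c=\frac{h(5-2n-2h^2)}{8}.
\]
Equivalently, in your route, $d=0$ gives $-h(2h^2+2n-1)/8$ and the factor $\sqrt{1+1/\lambda}$ adds $h/2$, as you anticipated. Negativity for $n\geq3$ is then immediate from $h>0$ and $5-2n<0$. No bound on Hermite zeros is needed. Your proposed fallback $h_j^2<2n+1$ could not have helped anyway, because $h^2$ enters $c$ with a negative sign.

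Two smaller corrections.

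\emph{The limiting value.} In your variables, $\mu^{n/2}C_n^{1/\mu}(\sqrt\mu\,t)\to H_n(t)/n!$ in the paper's (physicists') normalisation. Hence $F_{n,j}\to h_j$, the $j$-th positive zero of $H_n$ itself. For $n=2$ this is $1/\sqrt2$, consistent with your own remark that $F_{2,1}\equiv1/\sqrt2$; the value $h_j/\sqrt2$ is not.

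\emph{Identifying the germ.} The holomorphic germ produced at $\mu=0$ must be identified with the globally continued branch $F_{n,j}$, and with the $j$-th Hermite zero rather than another one. Only then can its Taylor series serve as the Laurent expansion required in Lemma~\ref{lem:boundary}. The paper does this in two steps. A Rouché argument on conjugation-symmetric discs about the Hermite zeros matches the orders on the positive axis for large $\lambda$. The identity theorem on a slit punctured disc in the parameter $t$ then extends the match.
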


\begin{proof}
For $a>-1/2$, the upper boundary value is positive real. In the
interval $-1<a<-1/2$, the required branch stays real: the
discriminant formula in Proposition~\ref{prop:disc} has neither a zero
nor a pole there, so continuation from the right cannot create a
multiple zero or a loss of degree. Nor can a
positive branch cross the origin: for even $n$,
$$
 C_n^a(0)=\frac{(-1)^{n/2}(a)_{n/2}}{(n/2)!}\neq0
 ,\quad -1<a<-1/2,
$$
whereas for odd $n$ the origin is the permanent central zero and
meeting it would be a collision with that branch.  Hence the
continued positive branch remains positive and real throughout
$(-1,-1/2)$.

For $a<-1$, the upper boundary value of the scale is
$\ii\sqrt{|a+1|}$. At a regular point $a$, the algebraic branch
has a finite upper boundary value. Since
$\Rea z_{n,j}(a+\ii y)>0$ for every $y>0$,
continuity as $y\downarrow0$ gives
$\Rea z_{n,j}(a+\ii0)\geq0$. Consequently,
$$
 \Ima F_{n,j}(a+\ii0)
 =\sqrt{|a+1|}\,\Rea z_{n,j}(a+\ii0)\geq0.
$$

We finally check infinity. Define the Hermite polynomial $H_n$ by
$$
 H_n(w)=(-1)^n e^{w^2}\frac{\dd^n}{\dd w^n}e^{-w^2};
$$
direct differentiation shows that it is the polynomial solution of
$H_n''-2wH_n'+2nH_n=0$ with leading coefficient $2^n$.
Rodrigues' formula also fixes the relevant orthogonality without
requiring any further convention about Hermite normalisations. If
$p$ is a polynomial with $\deg p<n$, then $n$ integrations by
parts, whose boundary terms vanish because of the Gaussian factor,
give
$$
 \int_{\R}H_n(w)p(w)e^{-w^2}\,\dd w
 =\int_{\R}p^{(n)}(w)e^{-w^2}\,\dd w=0.
$$
Thus $H_n$ is the degree-$n$ member of an orthogonal polynomial
system for the positive weight $e^{-w^2}$. Its zeros are therefore
real and simple by \cite[Theorem~3.3.1, pp.~44--46]{Szego}.
Before making a perturbation calculation, we verify its analytic
meaning. For a real constant $d$, put $t=(\lambda+d)^{-1}$ and
define
\begin{equation}\label{eq:scaled-polynomial}
 {\mathscr P}_{n,d}(w,t):=
 \sum_{r=0}^{\lfloor n/2\rfloor}
 \frac{(-1)^r(2w)^{n-2r}}{r!(n-2r)!}
 \prod_{q=0}^{n-r-1}\{1+(q-d)t\}.
\end{equation}
For $t\neq0$, the finite sum \eqref{eq:gegen-sum} shows that
$$
 {\mathscr P}_{n,d}(w,t)
 =t^{n/2}C_n^{\,t^{-1}-d}(w\sqrt t),
$$
where the right-hand side is interpreted through the single-valued
polynomial expression \eqref{eq:scaled-polynomial}. In particular,
${\mathscr P}_{n,d}$ is a polynomial in $(w,t)$, and
$$
 {\mathscr P}_{n,d}(w,0)=\frac{H_n(w)}{n!}.
$$
Every zero $h$ of $H_n$ is consequently simple. The holomorphic
implicit-function theorem supplies a unique zero $W_d(t)$,
holomorphic for $|t|$ small, with $W_d(0)=h$.

We identify this local germ with the globally labelled zero branch.
The coefficients of ${\mathscr P}_{n,d}(\,\cdot\,,t)$ converge to
those of $H_n/n!$, so the convergence is uniform on compact
$w$-sets. Choose disjoint closed discs $D_\ell$, symmetric about
the real axis, one around each Hermite zero $h_\ell$, and with no
Hermite zero on any boundary. Put
$$
 M_\ell:=\min_{w\in\partial D_\ell}
 \left|\frac{H_n(w)}{n!}\right|>0.
$$
Since there are only finitely many discs, uniform convergence gives
one $\tau>0$ such that, whenever $|t|<\tau$,
$$
 \left|{\mathscr P}_{n,d}(w,t)-\frac{H_n(w)}{n!}\right|
 <M_\ell,\quad w\in\partial D_\ell,
$$
simultaneously for every $\ell$. Rouché's theorem
\cite[Chapter~4, Section~5.2, p.~153]{Ahlfors} then shows that each
disc contains exactly one zero of
${\mathscr P}_{n,d}(\,\cdot\,,t)$. These $n$ zeros exhaust its
degree. If $t>0$, the polynomial has real coefficients. The unique
zero in each symmetric disc must therefore be real, since otherwise
its conjugate would be a second zero in the same disc. Their order is
the order of the Hermite zeros and cannot change while they remain
simple. Because multiplication by the positive number
$\sqrt{\lambda+d}$ preserves the order of the real Gegenbauer
zeros for large $\lambda$, the germ through the $j$-th positive
Hermite zero is precisely
$$
 W_d(t)=\sqrt{\lambda+d}\,z_{n,j}(\lambda)
$$
on the positive real axis near $t=0$. In particular, the largest
Gegenbauer zero corresponds to the largest Hermite zero. Thus the
calculation below determines coefficients of a convergent Taylor
series, not merely of a formal expansion.

We also record the analytic step that promotes this real
identification to an identity of complex germs. For each of the
three real values of $d$ used below, choose $\varepsilon>0$ so
small that the change of variables
$$
 \lambda=t^{-1}-d
$$
maps
$$
 \Sigma_\varepsilon
 :=\{t\in\C:0<|t|<\varepsilon\}
   \setminus(-\varepsilon,0]
$$
into the common domain of holomorphy of the selected zero branch and
the square-root scale. On $\Sigma_\varepsilon$, with the branch
positive for $t>0$, both
$$
 W_d(t)
 \quad\hbox{and}\quad
 t^{-1/2}z_{n,j}(t^{-1}-d)
$$
are holomorphic. They agree on the positive real segment by the
preceding paragraph. The identity theorem
\cite[Chapter~4, Section~3.2, p.~127]{Ahlfors} therefore makes them equal
throughout $\Sigma_\varepsilon$. Since $W_d$ is holomorphic at
$t=0$, its convergent Taylor series is consequently the convergent
complex Laurent expansion of the globally labelled scaled branch at
infinity.

Because ${\mathscr P}_{n,d}$ has real coefficients when $d$ is
real, uniqueness also gives
$W_d(\overline t)=\overline{W_d(t)}$. Hence all coefficients of
this Taylor series are real, as required in
Lemma~\ref{lem:boundary}.

For the corresponding positive zero $h$ of $H_n$, perturbation of
the scaled Gegenbauer equation gives
\begin{equation}\label{eq:F-infty}
 F_{n,j}(\lambda)
 =h+\frac{h(5-2n-2h^2)}{8(\lambda+1)}
 +O(\lambda^{-2}).
\end{equation}
For completeness, if
$W_d(t)=\sqrt{\lambda+d}\,z(\lambda)$ and
$t=(\lambda+d)^{-1}$, the scaled differential equation is
$$
 u''-2wu'+2nu+
 t\{-w^2u''+(2d-1)wu'+n(n-2d)u\}=0.
$$
We spell out the first perturbation coefficient.  Write
$$
 u(w,t)=H_n(w)+tQ(w)+O(t^2);
$$
the harmless scalar normalisation has been chosen so that the leading
term is $H_n$.  With
$$
 L_0=\frac{\dd^2}{\dd w^2}-2w\frac{\dd}{\dd w}+2n,
\quad
 L_1=-w^2\frac{\dd^2}{\dd w^2}
 +(2d-1)w\frac{\dd}{\dd w}+n(n-2d),
$$
the coefficient of $t$ is $L_0Q=-L_1H_n$. Taylor's formula
applied to $e^{-(w-t)^2}$ gives
$$
 e^{2wt-t^2}
 =\sum_{r=0}^{\infty}H_r(w)\frac{t^r}{r!}.
$$
Differentiating this identity with respect to $w$ and $t$, and
then comparing coefficients, gives
$$
 H_n'=2nH_{n-1},\quad
 wH_r=\frac12H_{r+1}+rH_{r-1},
 \quad r=1,2,\ldots.
$$
For $n\geq4$, these identities yield
$$
 L_1H_n
 =4n(n-1)(d-n+1)H_{n-2}
 -4n(n-1)(n-2)(n-3)H_{n-4}.
$$
For $n=2$ or $n=3$, the same calculation gives only
$$
 L_1H_n=4n(n-1)(d-n+1)H_{n-2}.
$$
Since
$$
 L_0H_{n-2r}=4rH_{n-2r},
 \quad 0\leq r\leq\left\lfloor\frac n2\right\rfloor,
$$
one may take, modulo an irrelevant multiple of $H_n$, when $n\geq4$,
$$
 Q=n(n-1)(n-1-d)H_{n-2}
 +\frac{n(n-1)(n-2)(n-3)}2H_{n-4}.
$$
When $n=2$ or $n=3$, one instead takes
$$
 Q=n(n-1)(n-1-d)H_{n-2}.
$$
Let $h$ be a zero of $H_n$.  The Hermite recurrence evaluated
successively at $h$ yields
$$
 \frac{H_{n-2}(h)}{H_n'(h)}
 =\frac{h}{2n(n-1)}
$$
and, for $n\geq4$,
$$
 \frac{H_{n-4}(h)}{H_n'(h)}
 =\frac{h(2h^2-2n+3)}
 {4n(n-1)(n-2)(n-3)}.
$$
Write $W_d=h+a_1t+O(t^2)$. The identity
$u(W_d(t),t)=0$ first gives
$$
 a_1=-\frac{Q(h)}{H_n'(h)}.
$$
For $n\geq4$, substitution of the preceding two ratios in $Q$
gives
$$
 a_1=\frac{h\{4d-(2h^2+2n-1)\}}8.
$$
The small degrees require the same substitution in their shorter
formula for $Q$. Since
$$
 H_2(w)=4w^2-2,\quad H_3(w)=8w^3-12w,
$$
every zero $h$ in degree $n=2$ or $n=3$ satisfies
$$
 h(2h^2-2n+3)=0.
$$
Consequently, in those two degrees,
$$
 -\frac{Q(h)}{H_n'(h)}
 =\frac{h(d-n+1)}2
 =\frac{h\{4d-(2h^2+2n-1)\}}8.
$$
Thus the same value of $a_1$ holds in every degree under
consideration.
We have therefore proved, rather than merely quoted, the expansion
$$
 W_d(t)
 =h+\frac{h\{4d-(2h^2+2n-1)\}}8\,t+O(t^2).
$$
Equivalently, the corresponding scaled zero branch satisfies
\begin{equation}\label{eq:Wd-infty}
 \sqrt{\lambda+d}\,z(\lambda)
 =h+
 \frac{h\{4d-(2h^2+2n-1)\}}{8(\lambda+d)}
 +O(\lambda^{-2}),
\end{equation}
of which \eqref{eq:F-infty} is the case $d=1$. The coefficient in
\eqref{eq:F-infty} is strictly negative for $n\geq3$.

The analytic construction in \eqref{eq:scaled-polynomial} shows that
\eqref{eq:F-infty} is the beginning of a convergent real Laurent
expansion. Lemma~\ref{lem:boundary}, together with
Remark~\ref{rem:boundary-growth}, completes the proof.
\end{proof}

\begin{corollary}\label{cor:F-CBF}
The first square-root conjecture holds.
\end{corollary}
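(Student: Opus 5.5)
We deduce Corollary~\ref{cor:F-CBF} from Theorem~\ref{thm:F-pick} by way of the Pick--Bernstein characterisation, Theorem~\ref{thm:Pick-CBF}. Put $s=\lambda+1/2$ and
$$
 f(s):=\sqrt{s+\tfrac12}\,z_{n,j}\bigl(s-\tfrac12\bigr)=F_{n,j}(s-\tfrac12).
$$
Then $f$ is positive on $(0,\infty)$, because both factors are positive for $\lambda>-1/2$. By the closing paragraph of Section~\ref{sec:geometry}, $z_{n,j}$ continues single-valuedly and holomorphically to $\Omega=\C\setminus(-\infty,-1/2]$. The factor $\sqrt{\lambda+1}$ is holomorphic on $\C\setminus(-\infty,-1]\supset\Omega$. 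Hence $f$ extends holomorphically to $\C\setminus(-\infty,0]$. Translation by a real constant preserves $\Hh$, so Theorem~\ref{thm:F-pick} gives $\Ima f>0$ on $\Hh$. Condition (ii) of Theorem~\ref{thm:Pick-CBF} therefore holds, and $f\in\CBF$, which is item (1) of Theorem~\ref{thm:main}.

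The derivative inequalities then follow from the differentiation formulae recorded after Theorem~\ref{thm:Pick-CBF}:
$$
 (-1)^m f^{(m+1)}(s)\geq0,\quad m=0,1,2,\ldots,
$$
and the chain rule with unit Jacobian transfers these to $F_{n,j}$ on $\lambda>-1/2$. The conjecture asks for strict inequalities, so we must show that the representing measure $\nu$ is nonzero. Suppose instead that $\nu=0$. Then $f(s)=a+bs$ is entire. Its continuation would agree with the holomorphic extension on $\C\setminus(-\infty,0]$, and so would have no branching at $s=-1/2$, i.e.\ at $\lambda=-1$.

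To reach a contradiction, use expansion \eqref{eq:F-infty}, which gives $F_{n,j}(\lambda)\to h$ as $\lambda\to\infty$ with a nonzero coefficient of $(\lambda+1)^{-1}$. A function of the form $a+bs$ cannot have this behaviour: $b=0$ forces $f$ to be constant, and $b\neq0$ makes $f$ unbounded. This alone excludes $\nu=0$. Since $b=0$ by the same boundedness at infinity, the case $m=0$ is strict as well, because $f'(s)=\int t(s+t)^{-2}\,\nu(\dd t)>0$ when $\nu\neq0$. All inequalities are therefore strict.

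The main obstacle is ensuring that the holomorphic extension is defined on the whole slit plane, so that the translation by $1/2$ is exactly right. The branch of $z_{n,j}$ is only guaranteed on $\Omega$, whereas the scale is holomorphic on the larger set $\C\setminus(-\infty,-1]$. The product's natural slit is therefore $(-\infty,-1/2]$, which becomes $(-\infty,0]$ in $s$. This matches condition (ii) precisely, so no further continuation of $z_{n,j}$ into $(-1,-1/2)$ from below is needed. Real-valuedness of $f$ on $(0,\infty)$ is immediate, and positivity there settles the remaining hypothesis.
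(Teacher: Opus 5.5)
Your proof is correct and follows the paper's argument: you translate by $1/2$, use the single-valued continuation to $\C\setminus(-\infty,-1/2]$ together with Theorem~\ref{thm:F-pick} to invoke the Pick characterisation, and you exclude a zero representing measure because an affine function cannot have the finite limit and nonzero $1/\lambda$-coefficient of \eqref{eq:F-infty}. Two steps are unnecessary and can be deleted: the aside about branching at $\lambda=-1$ is never used, and deducing $b=0$ adds nothing, since $f'(s)=b+\int t(s+t)^{-2}\,\nu(\dd t)>0$ already holds whenever $\nu\neq0$.
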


\begin{proof}
Put $s=\lambda+1/2$. The function
$$
 f_{n,j}(s):=F_{n,j}(s-1/2)
$$
is holomorphic in $\C\setminus(-\infty,0]$, nonnegative on
$(0,\infty)$, and Pick by Theorem~\ref{thm:F-pick}. Hence
$f_{n,j}\in\CBF$ by the Pick characterisation
\cite[Theorem~6.9, pp.~78--79]{Schilling}. Its derivative is completely
monotone.

The inequalities are strict. Indeed, the expansion
\eqref{eq:F-infty} has a nonzero $1/\lambda$-coefficient, whereas a
complete Bernstein function with zero representing measure is affine.
Since $F_{n,j}$ has a finite limit at infinity, such an affine
function would have to be constant. Hence the representing measure is
nonzero. Differentiating the complete Bernstein representation gives
strict alternating signs at every order.
\end{proof}

\section{The Ismail--Letessier scaling}
\label{sec:h}

The second historical scaling is
$$
 H_{n,j}(\lambda):=\sqrt{\lambda}\,z_{n,j}(\lambda),
 \quad \lambda>0.
$$
Although its proof uses the same boundary principle as the first
scaling, it is not obtained from that result by translating the
parameter: translation changes the zero $z_{n,j}(\lambda)$ as well
as the square-root factor.

\begin{theorem}\label{thm:H-pick}
For every $n\geq2$ and every positive zero branch,
$$
 \Ima H_{n,j}(\lambda)>0,\quad \lambda\in\Hh.
$$
\end{theorem}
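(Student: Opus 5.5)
The plan is to mirror the proof of Theorem~\ref{thm:F-pick}, applying Lemma~\ref{lem:boundary} to $H=H_{n,j}$. The square root $\sqrt{\lambda}$ is taken positive on $(0,\infty)$, with branch cut $(-\infty,0]$. The branch $z_{n,j}$ is holomorphic in $\Omega$ by Section~\ref{sec:geometry}, so $H_{n,j}$ is holomorphic in $\Hh$. It is also regular across every real point except $0$ and the finitely many discriminant or degree-loss points $\lambda_k=1/2-k$ and the negative integers. These points, together with $0$, form the exceptional set $E$. The local growth hypothesis at points of $E$ follows from Remark~\ref{rem:boundary-growth}: colliding branches stay bounded, divergent branches grow at most like $|\lambda-a|^{-1/2}$, and the factor $\sqrt\lambda$ is bounded near every finite point. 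The argument then reduces to three boundary checks: the real axis to the right of $0$, the real axis to the left of $0$, and infinity.

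\emph{Boundary on $(0,\infty)$.} Here $H_{n,j}$ is real and positive, so $\Ima H_{n,j}=0$. This is the only place where I need care at $\lambda=0$ itself. By the reduced convention, $z_{n,j}(0)$ is a zero of $2T_n/n$, and $z_{n,j}$ is analytic there because $0$ is not a discriminant point in Proposition~\ref{prop:disc} (the numerator vanishes only at $\lambda_k$, $k\ge2$, and the poles lie at $-s$, $s\ge\lceil n/2\rceil\ge1$). Hence $H_{n,j}=O(|\lambda|^{1/2})$ near $0$, which is well within the growth hypothesis, and $0$ is simply an element of $E$.

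\emph{Boundary on $(-\infty,0)$.} At a regular point $a<0$ the upper boundary value of the scale is $\sqrt{a+\ii0}=\ii\sqrt{|a|}$. The upper boundary value of $z_{n,j}$ exists and, by Lemma~\ref{lem:right-half} and continuity, satisfies $\Rea z_{n,j}(a+\ii0)\ge0$. Hence
$$
 \Ima H_{n,j}(a+\ii0)=\sqrt{|a|}\,\Rea z_{n,j}(a+\ii0)\ge0 .
$$
This step is in fact simpler than in Theorem~\ref{thm:F-pick}, because no separate interval where the branch remains real needs to be treated: the interval $(-1/2,0)$ is covered by the same computation.

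\emph{Infinity.} Use \eqref{eq:Wd-infty} with $d=0$. It is a convergent real Laurent expansion, by the construction around \eqref{eq:scaled-polynomial} and the identity-theorem argument given there. The expansion reads
$$
 H_{n,j}(\lambda)=h-\frac{h(2h^2+2n-1)}{8\lambda}+O(\lambda^{-2}),
$$
where $h>0$ is the corresponding positive Hermite zero. The coefficient $c=-h(2h^2+2n-1)/8$ is strictly negative for every $n\ge2$, with no condition on $h$. This explains why the range is $n\ge2$ here, rather than the $n\ge3$ needed for $d=1$.

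With all three checks in place, Lemma~\ref{lem:boundary} gives $\Ima H_{n,j}>0$ on $\Hh$.

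The main obstacle I expect is justifying the boundary hypotheses near $0$ and the left half-line in full. Specifically, one must verify that $z_{n,j}$ has finite upper boundary values at every regular point of $(-\infty,0)$, that the Puiseux and growth conditions hold at each point of $E$ after multiplication by $\sqrt\lambda$, and that the reduced normalisation at $\lambda=0$ really gives a regular branch there. All of these follow from Remark~\ref{rem:degree-loss}, Remark~\ref{rem:boundary-growth} and Proposition~\ref{prop:disc}.
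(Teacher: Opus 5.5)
Your proposal is correct and follows the paper's proof essentially step for step. You use the same ingredients: positivity of $H_{n,j}$ on $(0,\infty)$; the identity $\Ima H_{n,j}(a+\ii0)=\sqrt{|a|}\,\Rea z_{n,j}(a+\ii0)\geq0$ on the negative axis via Lemma~\ref{lem:right-half}; the strictly negative $1/\lambda$-coefficient from \eqref{eq:Wd-infty} with $d=0$; and finally Lemma~\ref{lem:boundary} with the growth input of Remark~\ref{rem:boundary-growth}. Your explicit placement of $\lambda=0$ in $E$, where the branch is regular and $H_{n,j}=O(|\lambda|^{1/2})$, only spells out what the paper leaves implicit; note that $E$ needs only the finitely many degree-loss integers $-s$, $\lceil n/2\rceil\leq s\leq n-1$, not all negative integers.
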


\begin{proof}
We use the square root with argument in $(-\pi,\pi)$.  The zero
branch is holomorphic in the upper half-plane, and
Lemma~\ref{lem:right-half} gives
$$
 \Rea z_{n,j}(\lambda)>0,\quad \lambda\in\Hh.
$$
Consequently $H_{n,j}$ is holomorphic there.

We next verify the boundary sign required by
Lemma~\ref{lem:boundary}.  If $a>0$, orthogonality shows that
$z_{n,j}(a)$ is positive and real.  Hence
$$
 \Ima H_{n,j}(a+\ii0)=0.
$$
If $a<0$, the upper boundary value of the principal square root is
$$
 \sqrt{a+\ii0}=\ii\sqrt{|a|}.
$$
At a regular point $a<0$, algebraicity gives a finite upper boundary
value of the zero branch.  Lemma~\ref{lem:right-half} and continuity
as the parameter approaches $a$ vertically imply
$\Rea z_{n,j}(a+\ii0)\geq0$. Hence
$$
 \Ima H_{n,j}(a+\ii0)
 =\sqrt{|a|}\,\Rea z_{n,j}(a+\ii0)\geq0
$$
at every regular boundary point.  The finitely many algebraic
singularities are precisely the exceptional points allowed in
Lemma~\ref{lem:boundary}; their local growth satisfies the hypothesis
by Remark~\ref{rem:boundary-growth}.

It remains to check the sign at infinity.  Apply the expansion
\eqref{eq:Wd-infty} with $d=0$.  If $\rho>0$ is the corresponding
positive zero of the Hermite polynomial $H_n$, defined in
Section~\ref{sec:f}, then
$$
 H_{n,j}(\lambda)
 =\rho-\frac{\rho(2\rho^2+2n-1)}{8\lambda}
 +O(\lambda^{-2}).
$$
The coefficient of $1/\lambda$ is strictly negative because
$\rho>0$ and $n\geq2$.  All the hypotheses of
Lemma~\ref{lem:boundary} are therefore satisfied, and that lemma gives
$\Ima H_{n,j}>0$ throughout $\Hh$.
\end{proof}

\begin{corollary}\label{cor:H-CBF}
The open-interval form of the second square-root conjecture holds,
with strict inequalities at every derivative order.
\end{corollary}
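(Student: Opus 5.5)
The argument runs parallel to Corollary~\ref{cor:F-CBF}, now using Theorem~\ref{thm:H-pick}. No translation of the parameter is needed: the function $H_{n,j}(s)=\sqrt{s}\,z_{n,j}(s)$ is already defined for $s>0$. We check the three requirements of Theorem~\ref{thm:Pick-CBF}(ii) and then establish strictness.

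First, holomorphy on $\C\setminus(-\infty,0]$. At the end of Section~\ref{sec:geometry}, each positive branch was continued single-valuedly to $\Omega=\C\setminus(-\infty,-1/2]$, which contains $\C\setminus(-\infty,0]$. The principal square root is holomorphic on $\C\setminus(-\infty,0]$. Hence $H_{n,j}$ is holomorphic there, and it agrees with the function of Theorem~\ref{thm:H-pick} on $\Hh$.

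Second, nonnegativity on $(0,\infty)$. For $s>0$ the zero $z_{n,j}(s)$ is a positive orthogonal-polynomial zero, so $H_{n,j}(s)>0$.

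Third, the Pick property. Theorem~\ref{thm:H-pick} gives $\Ima H_{n,j}>0$ on $\Hh$. The Pick characterisation \cite[Theorem~6.9, pp.~78--79]{Schilling} then shows $H_{n,j}\in\CBF$, and the differentiated representation in Section~\ref{sec:tools} gives $(-1)^mH_{n,j}^{(m+1)}(s)\geq0$ for every $m$ and every $s>0$. This is the open-interval form of Conjecture~\ref{conj:h}.

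For strictness, we show that the representing measure $\nu$ is nonzero. If $\nu=0$, then $H_{n,j}(s)=a+bs$. The expansion at infinity from the proof of Theorem~\ref{thm:H-pick},
$$
 H_{n,j}(\lambda)=\rho-\frac{\rho(2\rho^2+2n-1)}{8\lambda}+O(\lambda^{-2}),
$$
has a finite limit, which forces $b=0$. Its nonzero $1/\lambda$ coefficient then contradicts constancy. Hence $\nu\neq0$. The integral formulae in Section~\ref{sec:tools} give strict inequalities for every $m\geq1$, since the integrand $t/(s+t)^{m+2}$ is positive. For $m=0$ we have $f'(s)=b+\int t(s+t)^{-2}\,\nu(\dd t)>0$ as well.

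The only point needing care is that the holomorphic extension is genuinely single-valued on the whole slit plane, including the segment $(-1/2,0]$ adjacent to the cut. This follows because $\Omega\supset\C\setminus(-\infty,0]$, so the argument presents no real obstacle beyond what was established earlier.
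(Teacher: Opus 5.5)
Your proof is correct. Its skeleton matches the paper's: holomorphy on the slit plane, positivity on $(0,\infty)$, the Pick property from Theorem~\ref{thm:H-pick}, then Theorem~\ref{thm:Pick-CBF} together with a nonzero representing measure. Two steps are carried out differently.

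\textbf{Extension to the slit plane.} The paper obtains the extension to $\C\setminus(-\infty,0]$ by Schwarz reflection across $(0,\infty)$. This needs only the function on $\Hh$ and its real boundary values on the positive axis. You instead combine the single-valued continuation of the zero branch to $\Omega=\C\setminus(-\infty,-1/2]$ from Section~\ref{sec:geometry} with the principal square root. This is equally valid and, given what was already proved, arguably more direct.

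\textbf{Strictness.} This is the more substantive difference. You reuse the nonzero $1/\lambda$ coefficient of the Hermite expansion at infinity, the $d=0$ case of the expansion in the proof of Theorem~\ref{thm:H-pick}, exactly as in Corollary~\ref{cor:F-CBF}. The paper works at the other end. It uses the reduced Chebyshev limit $C_n^\lambda/\lambda\to(2/n)T_n$ to get $H_{n,j}(\lambda)=\zeta_{n,j}\lambda^{1/2}+O(\lambda^{3/2})$. This nonremovable square-root term at $\lambda=0$ is incompatible with an affine function.

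\textbf{What each route buys.} Your route is shorter. The paper's route also yields $(-1)^mH_{n,j}^{(m+1)}(\lambda)\sim\zeta_{n,j}\frac{(2m-1)!!}{2^{m+1}}\lambda^{-m-1/2}$ as $\lambda\downarrow0$. That shows the derivatives are infinite at the endpoint, which is why the conjecture can only be asserted on the open interval in the usual finite-valued sense. That endpoint information is not part of the statement, so your argument suffices for it.
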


\begin{proof}
Schwarz reflection across the positive real axis continues $H_{n,j}$ from
the upper and lower half-planes to a holomorphic function on
$\C\setminus(-\infty,0]$; see
\cite[Chapter~4, Section~6.5, Theorem~24,
pp.~172--174]{Ahlfors}. On $(0,\infty)$ it is nonnegative, and
Theorem~\ref{thm:H-pick} says that it maps the upper half-plane into
itself.  The Pick characterisation
\cite[Theorem~6.9, pp.~78--79]{Schilling} therefore gives
$$
 H_{n,j}\in\CBF.
$$
In particular, $H_{n,j}'$ is completely monotone.

We give the endpoint asymptotics explicitly, both to prove strictness
and to determine the status of the closed endpoint in the printed
conjecture.
Differentiating the defining generating function at $\lambda=0$
gives
$$
 \lim_{\lambda\to0}\frac{C_n^\lambda(x)}{\lambda}
 =-[t^n]\log(1-2xt+t^2)
 =\frac{2}{n}T_n(x),\quad n\geq1.
$$
The coefficient-extraction notation was fixed in
Section~\ref{sec:tools}. The last identity follows by putting
$x=\cos\theta$, factoring
$1-2xt+t^2=(1-e^{\ii\theta}t)(1-e^{-\ii\theta}t)$, and expanding
the two logarithms.  The zeros of $T_n$ are simple.  The analytic
implicit-function theorem thus shows that each positive zero branch
has a convergent Taylor expansion, beginning with
$$
 z_{n,j}(\lambda)=\zeta_{n,j}+O(\lambda),
 \quad
 \zeta_{n,j}
 =\cos\frac{(2j-1)\pi}{2n}>0.
$$
Therefore
$$
 H_{n,j}(\lambda)
 =\zeta_{n,j}\lambda^{1/2}+O(\lambda^{3/2})
 ,\quad \lambda\downarrow0.
$$
This nonremovable square-root term shows that the measure in the
complete Bernstein representation is nonzero.  Differentiating that
representation proves
$$
 (-1)^mH_{n,j}^{(m+1)}(\lambda)>0,
 \quad m=0,1,2,\ldots,\quad \lambda>0.
$$
Moreover, termwise differentiation of this convergent Puiseux
expansion gives
$$
 (-1)^mH_{n,j}^{(m+1)}(\lambda)
 =\zeta_{n,j}\frac{(2m-1)!!}{2^{m+1}}\,
   \lambda^{-m-1/2}
 +O(\lambda^{-m+1/2}),
$$
where $(-1)!!=1$.  The expression therefore tends to $+\infty$
as $\lambda\downarrow0$. Thus every conjectured sign has a positive
extended limit at the endpoint. This does not produce ordinary
derivatives at $\lambda=0$: the derivatives are not finite there,
so complete monotonicity holds on $(0,\infty)$, not on the closed
interval in the usual finite-valued sense.
\end{proof}

\section{Orientation of the largest-zero branch}
\label{sec:orientation}

Let $Z_n=z_{n,1}$ be the largest positive zero branch.

\begin{lemma}\label{lem:largest-orientation}
For $n\geq4$,
$$
 \Ima Z_n(a+\ii0)\geq0,\quad
 a>-\left\lceil\frac n2\right\rceil.
$$
\end{lemma}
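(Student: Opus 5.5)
The plan is to follow the upper boundary value $Z_n(a+\ii0)$ leftwards from $a>-1/2$, one interval at a time:
$$
 I_k=(\lambda_k,\lambda_{k-1}),\quad \lambda_k=\tfrac12-k,\quad 2\leq k\leq\lfloor n/2\rfloor,
$$
and finally the interval from $\lambda_{\lfloor n/2\rfloor}$ down to $-\lceil n/2\rceil$. By Proposition~\ref{prop:disc} and Remark~\ref{rem:degree-loss}, the only real singular parameters in $a>-\lceil n/2\rceil$ are the points $\lambda_k$. The degree-loss points $-s$ with $s\geq\lceil n/2\rceil$ lie on or to the left of the endpoint. Hence on each open interval between consecutive $\lambda_k$ the branch $Z_n$ extends holomorphically across the real axis, and its boundary value is continuous there.

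The first step is a dichotomy on each such interval $J$: either the boundary value is real throughout $J$, or its imaginary part has a constant strict sign on $J$. The argument is the one from the proof of Lemma~\ref{lem:branch-labels}. If $Z_n(a_0+\ii0)$ were real at some $a_0\in J$, the germ $\overline{Z_n(\overline\lambda)}$ would be another holomorphic zero germ through the same simple root. Uniqueness in the implicit-function theorem then forces the branch to be real near $a_0$, and by continuation along $J$ (no collisions there) real on all of $J$. So $\Ima Z_n(a+\ii0)$ cannot change sign on $J$, and it suffices to fix the sign near the right endpoint of each interval.

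The second step is an induction on $k$, using Lemma~\ref{lem:local-splitting} at each $\lambda_k$.
\begin{itemize}
\item For $k=1$, the collision at $x=1$ is simple, so $u=-\kappa_{n,1}\delta+\cdots$ is holomorphic and real. Hence $Z_n$ crosses $\lambda_1$ as a real analytic branch and is real and larger than $1$ on $I_2$.
\item At a general $\lambda_k$, Lemma~\ref{lem:branch-labels} says the sheets meeting at $x=1$ are $z_{n,1},\dots,z_{n,k}$. Write them as $u=w_0\delta^{1/k}+O(\delta^{2/k})$ with $w_0=-\kappa_{n,k}^{1/k}e^{2\pi\ii l/k}$, $0\leq l\leq k-1$.
\item On the right of $\lambda_k$ ($\delta>0$), the sheet $l=0$ is real with $u<0$. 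By order it is $z_{n,k}$, which is not $Z_n$ once $k\geq2$. So $Z_n$ corresponds to some $l\neq0$.
\item By the induction hypothesis, $\Ima Z_n\geq0$ on the interval just to the right of $\lambda_k$. Hence the direction of $Z_n$ there, namely $\arg w_0=\pi+2\pi l/k$, lies in $[0,\pi]$ modulo $2\pi$. With $1\leq l\leq k-1$ this means $\pi+2\pi l/k\in[2\pi,3\pi-2\pi/k]$.
\item Passing above $\lambda_k$ means $\arg\delta$ runs from $0$ to $\pi$, which adds $\theta/k$ with $\theta\in[0,\pi]$ to the direction. The direction of $Z_n$ therefore stays in $[2\pi,3\pi-\pi/k]$, inside the closed upper half-plane.
\end{itemize}
The error term $O(\delta^{2/k})$ is negligible when the leading direction lies strictly inside the upper half-plane. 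In the boundary case of direction exactly $2\pi$ (or real), I would use the first step: either the branch is real on the next interval, or the sign is fixed by the next nonzero term. The cleaner route for that case is conjugation symmetry of the cluster (as in Lemma~\ref{lem:branch-labels}): a real sheet on the left is allowed anyway. Combined with the first step, this propagates $\Ima Z_n(a+\ii0)\geq0$ to the whole interval left of $\lambda_k$. It is exactly the sheet $l=0$, i.e. $z_{n,k}$, whose direction $\pi+\theta/k$ leaves the upper half-plane, which is why the statement concerns only the largest branch.

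The main obstacle is the bookkeeping of which local sheet $Z_n$ occupies at each successive $\lambda_k$. I will not identify $l$ explicitly. Instead I will argue only from the sign of $\Ima$ on the right-hand interval together with the exclusion $l\neq0$ supplied by Lemma~\ref{lem:branch-labels}. The reason is that the upper boundary value on $I_k$ is reached by continuing through $\Hh$, not along the real axis, so the usual real ordering is unavailable once branches become nonreal. Making precise that "the direction of $Z_n$ just right of $\lambda_k$" is well defined on that upper branch needs care. For this I would use the convergent parametrisation $u=vw(v)$ from Lemma~\ref{lem:local-splitting}, restricted to the sector $0\leq\arg\delta\leq\pi$.
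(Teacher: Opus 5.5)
Your proposal is correct and follows the paper's proof essentially step for step. It uses the same three ingredients: propagation of the sign between consecutive $\lambda_k$ by conjugation and uniqueness, the anticlockwise rotation by $\pi/k$ of the local sheets from Lemma~\ref{lem:local-splitting}, and the identification, via Lemma~\ref{lem:branch-labels}, of the unique exceptional negative-real incoming direction with $z_{n,k}$ rather than $Z_n$.

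The boundary case you hedge about never arises. At $\arg\delta=\pi$ the rotated direction lies in $[2\pi+\pi/k,\,3\pi-\pi/k]$, strictly inside the open upper half-plane, so the leading term alone fixes a strictly positive sign just to the left of each $\lambda_k$.
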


\begin{proof}
The branch is real until the first collision $\lambda_2=-3/2$.
At $\lambda_k=1/2-k$, put
$\epsilon=\lambda-\lambda_k$ and $u=x-1$. By
Lemma~\ref{lem:local-splitting},
$$
 u^k=(-1)^k\kappa_{n,k}\epsilon+
 O(\epsilon u,\epsilon^2,u^{k+1}),\quad
 \kappa_{n,k}>0.
$$
The convergent parametrisations in that lemma make the continuation
unambiguous. Fix the holomorphic germ $w(v)$ supplied there and,
for $\epsilon=re^{\ii\theta}$, take
$v=r^{1/k}e^{\ii\theta/k}$, $0\leq\theta\leq\pi$. If
$w_0=w(0)$ is the incoming leading coefficient, then this same germ
gives
$$
 u=w_0r^{1/k}e^{\ii\theta/k}+O(r^{2/k})
$$
uniformly for $\theta$ in this closed sector. Hence, when
$\epsilon$ makes an upper half-turn from the positive to the
negative real axis, every leading local direction rotates
anticlockwise by exactly $\pi/k$.

It is useful to visualise the first two cases. For $k=2$, the two
local solutions on the right of $\lambda_2$ point in the positive
and negative real directions. Passage above $\lambda_2$ rotates
them by $\pi/2$; the positive solution, which is the largest zero,
moves upwards. For $k=3$, the permitted incoming directions are
$\pi/3$, $\pi$, and $5\pi/3$. The direction $\pi/3$ remains in the
upper half-plane after rotation by $\pi/3$, whereas the negative
real direction $\pi$ would rotate beyond $\pi$. The general case
has the same elementary angular geometry; Figure~\ref{fig:triple-rotation}
records the triple collision without suppressing the exceptional
sheet.

\begin{figure}[!t]
\centering
\begin{tikzpicture}[
  x=1cm,y=1cm,line cap=round,line join=round,
  boundary/.style={
    draw=black!32,
    dash pattern=on 3pt off 2pt,
    line width=.62pt
  },
  upper/.style={fill=black!5},
  axis/.style={draw=black!38,line width=.54pt},
  ordinary/.style={
    -{Latex[length=1.8mm,width=1.25mm]},
    draw=black!50,
    line width=.70pt
  },
  largest/.style={
    -{Latex[length=2.05mm,width=1.4mm]},
    draw=black!86,
    line width=1.05pt
  },
  exceptional/.style={
    -{Latex[length=1.9mm,width=1.3mm]},
    draw=black!58,
    dash pattern=on 3pt off 2pt,
    line width=.82pt
  },
  arcarrow/.style={
    -{Latex[length=1.55mm,width=1.05mm]},
    draw=black!42,
    line width=.62pt
  },
  annotation/.style={font=\scriptsize,text=black!70},
  origin/.style={fill=black!88}
]
\begin{scope}[xshift=-2.75cm]
  \path[upper] (-1.55,0)
    arc[start angle=180,end angle=0,radius=1.55]--cycle;
  \draw[boundary] (0,0) circle (1.55);
  \draw[axis] (-1.78,0)--(1.78,0);
  \fill[origin] (0,0) circle (1.35pt);
  \draw[largest] (0,0)--({1.30*cos(60)},{1.30*sin(60)});
  \draw[exceptional] (0,0)--({1.30*cos(180)},{1.30*sin(180)});
  \draw[ordinary] (0,0)--({1.30*cos(300)},{1.30*sin(300)});
  \draw[arcarrow] (.48,0) arc[start angle=0,end angle=60,radius=.48];
  \node[annotation] at (.72,.25) {$\pi/3$};
  \node[annotation,anchor=north] at (0,-1.78)
    {incoming directions, $\epsilon>0$};
\end{scope}
\begin{scope}[xshift=2.75cm]
  \path[upper] (-1.55,0)
    arc[start angle=180,end angle=0,radius=1.55]--cycle;
  \draw[boundary] (0,0) circle (1.55);
  \draw[axis] (-1.78,0)--(1.78,0);
  \fill[origin] (0,0) circle (1.35pt);
  \draw[largest] (0,0)--({1.30*cos(120)},{1.30*sin(120)});
  \draw[exceptional] (0,0)--({1.30*cos(240)},{1.30*sin(240)});
  \draw[ordinary] (0,0)--({1.30*cos(0)},{1.30*sin(0)});
  \draw[arcarrow] (.48,0) arc[start angle=0,end angle=60,radius=.48];
  \node[annotation] at (.72,.25) {$\pi/3$};
  \node[annotation,anchor=north] at (0,-1.78)
    {continued directions, $\arg\epsilon=\pi$};
\end{scope}
\draw[-{Latex[length=2.1mm,width=1.45mm]},black!48,line width=.72pt]
  (-.72,.18) to[bend left=20] (.72,.18);
\end{tikzpicture}
\caption[Local sheet rotation at the triple collision]
{Local sheet rotation at the triple collision
$\lambda_3=-5/2$. The pale region is the upper $x$-half-plane.
Continuation of the parameter above $\lambda_3$ adds $\pi/3$ to
each incoming direction. The darker ray represents the largest-zero
sheet and remains in the upper half-plane. The dashed ray represents
the unique exceptional incoming sheet; after continuation it lies
below the real axis. The diagram records directions only and is not
drawn to scale.}
\label{fig:triple-rotation}
\end{figure}

Assume inductively that the incoming direction of $Z_n$ belongs to
$[0,\pi]$. The directions allowed by
Lemma~\ref{lem:local-splitting} are
not left implicit.  For $\epsilon>0$ they form the set
$$
 \Theta_k=
 \begin{cases}
 \{\,2\ell\pi/k:0\leq\ell\leq k-1\,\},&k\ \text{even},\\[7pt]
 \{\,(2\ell+1)\pi/k:0\leq\ell\leq k-1\,\},&k\ \text{odd}.
 \end{cases}
$$
In either parity the angles are separated by $2\pi/k$, and
$\pi\in\Theta_k$.  A direction in the closed upper half-plane is
sent below the real axis after addition of $\pi/k$ precisely when
it belongs to $(\pi-\pi/k,\pi]$.  Since the preceding element of
$\Theta_k$ is $\pi-2\pi/k$, one has
$$
 \Theta_k\cap(\pi-\pi/k,\pi]=\{\pi\}.
$$
Thus the negative real direction is the unique exceptional incoming
direction. It corresponds to
$$
 1-\kappa_{n,k}^{1/k}\epsilon^{1/k}
 +O(\epsilon^{2/k}).
$$
This is the $k$-th positive branch, not the largest branch. Indeed,
Lemma~\ref{lem:branch-labels} shows that the $k$-th branch is real
and regular at every earlier collision and reaches $x=1$ for the
first time at $\lambda_k$. It approaches from below and is
therefore precisely the local sheet with negative real incoming
direction.

This also explains why branch labels cannot be exchanged during the
argument. The open upper half-plane is simply connected, and the
discriminant does not vanish there. The monodromy theorem therefore
gives one single-valued continuation of each initial germ throughout
that half-plane. This is the precise reason why the label attached on
$(-1/2,\infty)$ is preserved. We do not appeal to the false general
principle that a permutation of algebraic sheets would require a
collision along the continuation path: such permutations can occur
around branch points on nonsimply connected domains.

For $k=2$, the largest branch is the positive local solution and
rotates into the upper half-plane. The preceding argument closes the
induction.  It remains to justify propagation between consecutive
discriminant points.  On such an interval the upper boundary value is
continuous.  If a non-real boundary zero were to reach the real
axis at an interior parameter, then, because the polynomial has real
coefficients there, it would coincide with its conjugate zero.  The
resulting multiple root would force the discriminant to vanish,
contrary to Proposition~\ref{prop:disc}. If the boundary branch is
real at one regular parameter, the polynomial has real coefficients
there; complex conjugation and uniqueness in the holomorphic
implicit-function theorem then show that its local germ is real on
the real axis. It therefore remains real until a multiple root or a
degree loss is encountered. Thus a branch which is real on an entire
subinterval contributes zero imaginary part and does not violate the
required inequality. Hence the sign cannot change.
The same propagation argument applies from the last collision
$\lambda_{\lfloor n/2\rfloor}$ down to the first pole. The
coefficient formula in Remark~\ref{rem:degree-loss} places that pole
at $-\lceil n/2\rceil$, proving the assertion.
\end{proof}

\section{The Ahmed--Muldoon--Spigler scaling}
\label{sec:gpositive}

Define
$$
 G_n(\lambda):=\sqrt{\lambda+d_n}\,Z_n(\lambda),
 \quad d_n=\frac{2n^2+1}{4n+2}.
$$

\begin{theorem}\label{thm:G-pick}
For every $n\geq4$,
$$
 \Ima G_n(\lambda)>0,\quad \lambda\in\Hh.
$$
\end{theorem}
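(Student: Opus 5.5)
We apply Lemma~\ref{lem:boundary} to $H=G_n$, with the square root $\sqrt{\lambda+d_n}$ positive on $(-d_n,\infty)$ and cut along $(-\infty,-d_n]$. By Lemma~\ref{lem:right-half} and the single-valued continuation of $Z_n$ to $\Hh$, $G_n$ is holomorphic in $\Hh$. The exceptional set $E$ is finite: it consists of the discriminant points $\lambda_k$, the degree-loss integers $-s$ from Remark~\ref{rem:degree-loss}, and the branch point $-d_n$ of the scale. Remark~\ref{rem:boundary-growth} supplies the local growth bound with exponent below $1$ at each of these points.

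At infinity we use \eqref{eq:Wd-infty} with $d=d_n$, where $h=h_{\max}$ is the largest zero of the Hermite polynomial $H_n$. The coefficient of $1/(\lambda+d_n)$ is $h\{4d_n-(2h^2+2n-1)\}/8$. Rewriting $(\lambda+d_n)^{-1}=\lambda^{-1}+O(\lambda^{-2})$, we need
\[
2h_{\max}^2>4d_n-2n+1=\frac{4n^2+2}{2n+1}-2n+1=\frac{3}{2n+1},
\]
which is immediate for $n\geq4$ (for instance, $h_{\max}^2>1$, since $H_n$ has a zero beyond $1$ once $n\geq3$). Hence $c<0$. The real analyticity of the Laurent series at infinity was established in the proof of Theorem~\ref{thm:F-pick}.

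It remains to check the boundary sign at regular real points $a$, in three ranges.
\begin{enumerate}[label=(\roman*)]
\item For $a>-d_n$: the scale is positive, so $\Ima G_n(a+\ii0)=\sqrt{a+d_n}\,\Ima Z_n(a+\ii0)$. This is nonnegative by Lemma~\ref{lem:largest-orientation}, provided $-d_n\geq-\lceil n/2\rceil$. Since $d_n=(2n^2+1)/(4n+2)<n/2$ (indeed $2n^2+1<2n^2+n$), we have $-d_n>-\lceil n/2\rceil$, so the lemma covers the whole range $a>-d_n$.
\item For $a<-d_n$: the scale equals $\ii\sqrt{|a+d_n|}$, so $\Ima G_n(a+\ii0)=\sqrt{|a+d_n|}\,\Rea Z_n(a+\ii0)\geq0$ by Lemma~\ref{lem:right-half} and continuity, exactly as in Theorems~\ref{thm:F-pick} and \ref{thm:H-pick}.
\item On the overlap: the orientation lemma is needed only on $(-d_n,\infty)$, and the right-half-plane property covers everything to the left of $-d_n$.
\end{enumerate}

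The main obstacle is therefore the orientation input (i). It is exactly what Lemma~\ref{lem:largest-orientation} provides, together with the numerical check that $-d_n$ lies to the right of the first pole $-\lceil n/2\rceil$. I would verify that inequality explicitly for both parities of $n$ and note that $-d_n$ is not a discriminant point. If $-d_n$ happened to coincide with some $\lambda_k$, Remark~\ref{rem:boundary-growth} would still apply, since the growth exponent stays below $1$.

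With all hypotheses of Lemma~\ref{lem:boundary} verified, that lemma gives $\Ima G_n>0$ throughout $\Hh$.
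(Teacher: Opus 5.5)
Your proposal is correct and follows the paper's proof essentially step for step. Both apply Lemma~\ref{lem:boundary}, split the real boundary at $-d_n$, use Lemma~\ref{lem:largest-orientation} to the right (via $d_n<n/2\leq\lceil n/2\rceil$) and Lemma~\ref{lem:right-half} to the left, and check the sign of the $1/\lambda$ coefficient from \eqref{eq:Wd-infty}. Your reduction of that check to $2h_{\max}^2>3/(2n+1)$ is a slightly sharper form of the paper's comparison $4d_n<3n-2\leq 2h_n^2+2n-1$; the claim $h_{\max}^2>1$, which you assert without proof, follows from the Vieta identity $\sum\xi^2=n(n-1)/2$ used in the paper, since it gives $h_{\max}^2\geq(n-1)/2$.
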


\begin{proof}
Since
$$
 d_n<\frac n2\leq\left\lceil\frac n2\right\rceil,
$$
Lemma~\ref{lem:largest-orientation} applies throughout $a>-d_n$.
There
$$
 \Ima G_n(a+\ii0)
 =\sqrt{a+d_n}\,\Ima Z_n(a+\ii0)\geq0.
$$
For $a<-d_n$, Lemma~\ref{lem:right-half} gives
$\Rea Z_n(a+\ii0)\geq0$ by the same regular-boundary limiting
argument used in Theorems~\ref{thm:F-pick} and~\ref{thm:H-pick}.
Since the upper boundary value of the square root is
$\ii\sqrt{|a+d_n|}$, we obtain
$$
 \Ima G_n(a+\ii0)
 =\sqrt{|a+d_n|}\,\Rea Z_n(a+\ii0)\geq0.
$$

Let $h_n$ denote the largest positive zero of $H_n$. Formula
\eqref{eq:Wd-infty} yields
\begin{equation}\label{eq:G-infty}
 G_n(\lambda)
 =h_n+
 \frac{h_n\{4d_n-(2h_n^2+2n-1)\}}{8\lambda}
 +O(\lambda^{-2}).
\end{equation}
The coefficient of $w^{n-2}$ in the defining formula for $H_n$
is $-n(n-1)2^{n-2}$, whilst its leading coefficient is $2^n$
and its $w^{n-1}$-coefficient vanishes. Vieta's formula therefore
gives
$$
 \sum_{H_n(\xi)=0}\xi^2=\frac{n(n-1)}2.
$$
All Hermite zeros are real and symmetry pairs each zero with its
negative. Hence every zero has modulus at most $h_n$, and
$$
 h_n^2\geq\frac{n-1}{2}.
$$
Moreover,
$$
 4d_n=2n-1+\frac3{2n+1}<3n-2,\quad n\geq4.
$$
The coefficient of $1/\lambda$ in \eqref{eq:G-infty} is thus
strictly negative. As in the proof of Theorem~\ref{thm:F-pick}, the
implicit function theorem gives a convergent real Laurent expansion.
Lemma~\ref{lem:boundary} now proves the result.
\end{proof}

\begin{corollary}\label{cor:G-CBF}
The positive assertion for the degree-dependent scaling holds, with strict
inequalities at every derivative order.
\end{corollary}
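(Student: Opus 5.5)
The argument follows the template of Corollaries~\ref{cor:F-CBF} and~\ref{cor:H-CBF}, now applied to Theorem~\ref{thm:G-pick}. Put $s=\lambda+1/2$ and define
$$
 g_n(s):=G_n\left(s-\frac12\right)
 =\sqrt{s+d_n-\frac12}\,Z_n\left(s-\frac12\right).
$$
The first step is holomorphy. The largest branch $Z_n$ has a single-valued holomorphic continuation to $\Omega=\C\setminus(-\infty,-1/2]$ (end of Section~\ref{sec:geometry}). Since $d_n-1/2>0$, the factor $\sqrt{s+d_n-1/2}$ is holomorphic on $\C\setminus(-\infty,1/2-d_n]$, and this set contains $\C\setminus(-\infty,0]$. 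Hence $g_n$ is holomorphic on $\C\setminus(-\infty,0]$.

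The second step is positivity on $(0,\infty)$. For $s>0$ we have $\lambda>-1/2$, where $Z_n(\lambda)\in(0,1)$ by orthogonality and the square root is positive. So $g_n$ is nonnegative, indeed positive, on $(0,\infty)$.

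The third step is the Pick property. Theorem~\ref{thm:G-pick} gives $\Ima g_n>0$ on $\Hh$, so $g_n$ maps $\Hh$ into $\Hh$. Theorem~\ref{thm:Pick-CBF}, (ii)$\Rightarrow$(i), then yields $g_n\in\CBF$. This is item~(3) of Theorem~\ref{thm:main}.

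It remains to show the representing measure $\nu$ is nonzero, so that the derivative inequalities are strict. I would argue as in Corollary~\ref{cor:F-CBF}. Suppose $\nu=0$. Then $g_n(s)=a+bs$ is affine. But \eqref{eq:G-infty} shows that $g_n$ has the finite limit $h_n$ at infinity, which forces $b=0$. It also shows a nonzero coefficient of $1/\lambda$; Theorem~\ref{thm:G-pick} proved this coefficient strictly negative using $h_n^2\ge(n-1)/2$ and $4d_n<3n-2$. A constant function has no such term, so this is a contradiction, and $\nu\neq0$.

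With $\nu\neq0$, differentiating the representation under the integral sign (justified in Section~\ref{sec:tools}) gives
$$
(-1)^mg_n^{(m+1)}(s)=(m+1)!\int\frac{t}{(s+t)^{m+2}}\,\nu(\dd t)>0
$$
for $m\ge1$. For $m=0$ we get $g_n'(s)\ge\int t(s+t)^{-2}\nu(\dd t)>0$. Translating back to $\lambda=s-1/2$ gives strict complete monotonicity of $G_n'$ on $(-1/2,\infty)$.

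The only delicate point is that strictness relies on the asymptotic coefficient being nonzero. That fact was already established in Theorem~\ref{thm:G-pick}, so I do not expect a genuine obstacle here.
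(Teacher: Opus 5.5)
Your proposal is correct and follows the paper's proof essentially step for step. Both arguments establish holomorphy on $\C\setminus(-\infty,0]$, positivity on $(0,\infty)$, and the Pick property from Theorem~\ref{thm:G-pick}, then apply the Pick--Bernstein characterisation, and finally show the representing measure is nonzero because the finite limit and the nonzero $1/\lambda$-coefficient in \eqref{eq:G-infty} exclude an affine (hence constant) function; your explicit checks that $d_n>1/2$ keeps the scale's branch point inside the slit and that the $m=0$ inequality is strict merely spell out details the paper leaves implicit.
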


\begin{proof}
Put
$$
 \widetilde G_n(s):=
 \sqrt{s+d_n-\frac12}\,Z_n\left(s-\frac12\right).
$$
It is holomorphic in $\C\setminus(-\infty,0]$, is nonnegative for
$s>0$, and is a Pick function by Theorem~\ref{thm:G-pick}.
Therefore $\widetilde G_n\in\CBF$ by
\cite[Theorem~6.9, pp.~78--79]{Schilling}. The representing measure
is nonzero because \eqref{eq:G-infty} has a nonzero
$1/\lambda$-coefficient. As in the proof of
Corollary~\ref{cor:F-CBF}, a zero representing measure would make
$\widetilde G_n$ affine, and its finite limit at infinity would
then make it constant.
\end{proof}

\section{The remaining zeros and singularity analysis}
\label{sec:negative}

Fix $k\geq2$ and $n\geq2k$. Lemma~\ref{lem:local-splitting}
gives convergent parametrisations of all the local sheets at
$\lambda_k=1/2-k$. We now select the one corresponding to the
$k$-th positive zero.

\begin{lemma}\label{lem:k-Puiseux}
The $k$-th positive branch satisfies
\begin{equation}\label{eq:k-Puiseux}
 z_{n,k}(\lambda)
 =1-\kappa_{n,k}^{1/k}
 (\lambda-\lambda_k)^{1/k}
 +O\bigl((\lambda-\lambda_k)^{2/k}\bigr),
\end{equation}
where $\kappa_{n,k}$ is the positive number in
\eqref{eq:local-splitting}.
\end{lemma}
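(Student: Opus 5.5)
The plan is to read off the Puiseux expansion from Lemma~\ref{lem:local-splitting} and to pick out the correct sheet with Lemma~\ref{lem:branch-labels}. Lemma~\ref{lem:local-splitting} gives, after substituting $\lambda-\lambda_k=v^k$, exactly $k$ convergent parametrisations $u=vw(v)$ with $w(0)=w_0$ and $w_0^k=(-1)^k\kappa_{n,k}$. On the real side $\lambda>\lambda_k$ we take the real positive $k$-th root $v=(\lambda-\lambda_k)^{1/k}$. The $k$ incoming leading directions are then exactly the values $w_0$. They satisfy $w_0^k=(-1)^k\kappa_{n,k}$, so they are the numbers $-\kappa_{n,k}^{1/k}e^{2\pi\ii\ell/k}$, $0\leq\ell\leq k-1$. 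Exactly one of them is the negative real number $w_0=-\kappa_{n,k}^{1/k}$, corresponding to the direction $\pi\in\Theta_k$.

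The key step is to show that the $k$-th branch is this negative-real sheet.
\begin{enumerate}
\item By Lemma~\ref{lem:branch-labels}, $z_{n,k}$ is real and regular at all earlier collision points $\lambda_1,\ldots,\lambda_{k-1}$. As in the propagation part of that proof, it remains a real simple zero on the interval $I_k=(\lambda_k,\lambda_{k-1})$.
\item By the same lemma it tends to $1$ as $\lambda\downarrow\lambda_k$.
\item It cannot cross $x=1$ inside $I_k$, because the reduced value at $x=1$, proportional to $(\lambda+1/2)_m$, does not vanish there. It was below $1$ before the interval, being a residual zero in $(0,1)$ at $\lambda_{k-1}$ or an original zero for $k=1$. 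Hence $z_{n,k}(\lambda)<1$ on $I_k$, so $u=z_{n,k}(\lambda)-1<0$ is real there.
\item For real $v>0$, a sheet $u=vw(v)$ with $w(0)=w_0$ is real and negative for all small $v$ only if $w_0$ is real and negative. Thus the real branch approaching $1$ from below must lie on the sheet with $w_0=-\kappa_{n,k}^{1/k}$.
\item The exhaustion statement in Lemma~\ref{lem:local-splitting} guarantees that $z_{n,k}$ lies on one of the $k$ sheets. Uniqueness in the implicit-function theorem for $Q(v,w)=0$ then identifies it with that sheet near $\lambda_k$ on the right.
\end{enumerate}

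Writing $w(v)=w_0+O(v)$ gives $u=-\kappa_{n,k}^{1/k}v+O(v^2)$, which is \eqref{eq:k-Puiseux}. The expansion extends to a slit neighbourhood of $\lambda_k$, and in particular into the upper half-plane, by continuing $v=(\lambda-\lambda_k)^{1/k}$ along the principal branch. It then agrees with the globally labelled continuation by the identity theorem, using single-valuedness on $\Omega$ as noted at the end of Section~\ref{sec:geometry}.

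The main obstacle is step (3): showing the branch approaches $1$ strictly from below rather than from above. Equivalently, no earlier branch has taken over the label on the interval. I would handle it with the nonvanishing of the reduced value at $x=1$ on $I_k$, which is already used in the proof of Lemma~\ref{lem:branch-labels}, together with order preservation of the real simple zeros there.
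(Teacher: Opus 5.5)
Your proposal is correct and follows the paper's proof. Lemma~\ref{lem:branch-labels} identifies $z_{n,k}$ as the real sheet approaching $x=1$ from below, and among the parametrisations of Lemma~\ref{lem:local-splitting} this is the one with $w_0=-\kappa_{n,k}^{1/k}$; your steps (3)--(5) spell out the non-crossing of $x=1$ on $(\lambda_k,\lambda_{k-1})$, why a real negative sheet forces $w_0<0$, and the exhaustion, which the paper compresses into a single sentence.
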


\begin{proof}
For $\lambda>\lambda_k$, Lemma~\ref{lem:branch-labels} identifies
$z_{n,k}$ as the unique sheet that approaches $1$ from below.
In the convergent parametrisations of
Lemma~\ref{lem:local-splitting}, this is exactly the solution with
$$
 w_0=-\kappa_{n,k}^{1/k}.
$$
Substituting this value in the final expansion of that lemma gives
\eqref{eq:k-Puiseux}.
\end{proof}

\begin{lemma}[The dominant singularity]\label{lem:dominant}
Fix $k\geq2$, $n\geq2k$, and
$\lambda_*>-1/2$. For the branch
$$
 G_{n,k}(\lambda)=
 \sqrt{\lambda+d_n}\,z_{n,k}(\lambda),
$$
the point $\lambda_k=1/2-k$ is the unique nonremovable
singularity at minimum distance from $\lambda_*$. Equivalently,
the function
$$
 t\longmapsto G_{n,k}(\lambda_*-t)
$$
is holomorphic for $|t|<R:=\lambda_*-\lambda_k$, and $t=R$ is
its only singularity on $|t|=R$.
\end{lemma}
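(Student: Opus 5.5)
Since $\lambda_*>-1/2$ and every candidate singularity is real, the first task is to locate all singularities of $G_{n,k}$ on the real axis and compare distances. We work on the single-valued continuation of $z_{n,k}$ to $\Omega=\C\setminus(-\infty,-1/2]$ from Section~\ref{sec:geometry}. The possible nonregular points of $G_{n,k}$ are then the branch point $-d_n$ of the square-root factor, the discriminant points $\lambda_r=1/2-r$ ($2\le r\le\lfloor n/2\rfloor$), the degree-loss points $-s$ with $\lceil n/2\rceil\le s\le n-1$ (Proposition~\ref{prop:disc} and Remark~\ref{rem:degree-loss}), and $\lambda_1=-1/2$.

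The first step is to show that $z_{n,k}$ is regular at $\lambda_1,\dots,\lambda_{k-1}$ when approached along the real axis from the right, and real-analytic on $(\lambda_k,\infty)$. Lemma~\ref{lem:branch-labels} gives exactly this: for $r<k$, the branch $z_{n,k}$ tends to a simple residual zero at $\lambda_r$. The implicit-function theorem then makes $z_{n,k}$ holomorphic in a full disc about $\lambda_r$, so these points are removable singularities of the continuation. The real branch also stays positive and simple on each interval $I_r$, by the propagation argument in that lemma.

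Next I would check the remaining candidates. All degree-loss points satisfy $-s\le-\lceil n/2\rceil<\lambda_k$, since $k\le n/2$. The factor $\sqrt{\lambda+d_n}$ has its branch point at $-d_n$, and $d_n<n/2$. I expect the most delicate comparison to be $-d_n$ versus $\lambda_k$: we need $d_n>k-1/2$, and since $d_n=(2n^2+1)/(4n+2)>(n-1)/2\ge k-1/2$ holds whenever $n\ge 2k$, this is fine. I would still verify this carefully, including the edge case $n=2k$, where $d_{2k}=(8k^2+1)/(8k+2)>k-1/2$ holds because $8k^2+1>8k^2-2k-1$. Consequently every other real candidate lies strictly to the left of $\lambda_k$ and hence farther from $\lambda_*$.

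Now I would rule out singularities anywhere else on the circle $|\lambda-\lambda_*|=R$. Such points are nonreal, or real and lying to the right of $\lambda_k$, and $G_{n,k}$ is holomorphic there by the first step together with the absence of nonreal discriminant points. The circle meets the real axis only at $\lambda_k$ and at $2\lambda_*-\lambda_k>\lambda_*$, which is a regular point. Holomorphy on the open disc follows because the disc lies inside the region where all data are regular after the removals above. Finally, $\lambda_k$ is genuinely nonremovable. By Lemma~\ref{lem:k-Puiseux}, $z_{n,k}$ has a nonzero term in $(\lambda-\lambda_k)^{1/k}$ with $k\ge2$, while $\sqrt{\lambda+d_n}$ is holomorphic and nonzero at $\lambda_k$ because $\lambda_k+d_n>0$. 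Hence $G_{n,k}$ inherits the fractional term $-\sqrt{\lambda_k+d_n}\,\kappa_{n,k}^{1/k}(\lambda-\lambda_k)^{1/k}$.

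The main obstacle is the removability claim at the earlier collisions $\lambda_r$, $r<k$. The continuation in Section~\ref{sec:geometry} is single-valued only on the slit plane, so I must make sure that going around $\lambda_r$ within the disc does not switch sheets. This follows because $z_{n,k}$ is a simple root at $\lambda_r$ itself: the local sheet is then an honest holomorphic germ in a full neighbourhood, and its continuation through the upper and lower half-discs agrees with the slit-plane branch by uniqueness of continuation.
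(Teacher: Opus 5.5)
Your proposal is correct and follows essentially the paper's route. You list the real candidate singularities from the monic discriminant and the degree-loss poles, use Lemma~\ref{lem:branch-labels} to make the earlier collisions $\lambda_r$, $r<k$, removable for the $k$-th sheet, check that $-d_n$ and every degree-loss point lie strictly left of $\lambda_k$, and get nonremovability at $\lambda_k$ from Lemma~\ref{lem:k-Puiseux}. The differences are minor: you obtain $-d_n<\lambda_k$ from $d_n>(n-1)/2\geq k-1/2$ rather than the paper's parity computation at $k=\lfloor n/2\rfloor$, and you handle the monodromy by gluing the upper and lower half-disc values along the real segment where $z_{n,k}$ stays simple, instead of the paper's lollipop-loop generators.
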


\begin{proof}
We examine all possible sources of singularities.

\medskip
\noindent Earlier collision points.
At $\lambda_1=-1/2$, Proposition~\ref{prop:factor} has only simple
endpoint factors, and its residual factor also has simple zeros;
hence this parameter is regular for every zero branch. If
$2\leq j<k$, Lemma~\ref{lem:branch-labels} shows that the
$k$-th branch tends to a simple zero of
$C_{n-2j}^{j+1/2}$. The implicit-function theorem therefore
continues $z_{n,k}$ holomorphically through $\lambda_j$.

\medskip
\noindent Normalisation zeros.
The apparent zero of the unnormalised polynomial at $\lambda=-1$,
and the analogous common factors at other negative integers, do not
make every zero branch singular. They multiply all coefficients by
the same scalar and disappear upon passage to the reduced polynomials
displayed in Remark~\ref{rem:degree-loss}. After this cancellation, a
finite singularity of a zero branch can arise only in one of two
ways: the reduced polynomial has a multiple zero, or its leading
coefficient vanishes and a zero escapes to infinity. The first
possibility is detected by a zero of the monic discriminant
\eqref{eq:disc}; the second is represented by one of its poles and is
also visible directly in Remark~\ref{rem:degree-loss}. Thus the
numerator and denominator of \eqref{eq:disc} provide an exhaustive
finite list; a common normalisation zero is not an additional item.

\medskip
\noindent Nonreal points.
Proposition~\ref{prop:disc} shows that the monic discriminant has no
nonreal zero or pole. Hence no finite nonreal parameter can create a
collision or a loss of degree.

\medskip
\noindent The square-root scale.
The scale has its branch point at $-d_n$. We verify directly that it
lies to the left of $\lambda_k$. Since the smallest value of
$d_n+\lambda_k$ occurs at $k=\lfloor n/2\rfloor$, write either
$n=2m$ or $n=2m+1$. Then
$$
 d_{2m}+\frac12-m=\frac{m+1}{4m+1}>0,
$$
and
$$
 d_{2m+1}+\frac12-m
 =\frac{3(m+1)}{4m+3}>0.
$$
Thus $d_n+\lambda_k>0$, or $-d_n<\lambda_k$.

\medskip
\noindent Degree loss.
The first pole of the monic coefficients is
$-\lceil n/2\rceil$, by the reduced coefficient formula in
Remark~\ref{rem:degree-loss}. Since
$$
 \lambda_k=\frac12-k
 >-\left\lceil\frac n2\right\rceil,\quad
 k\leq\left\lfloor\frac n2\right\rfloor,
$$
every degree-loss point also lies strictly to the left of
$\lambda_k$.

This local list must still be converted into a statement about the
single germ selected at $\lambda_*$. In the disc
$$
 |\lambda-\lambda_*|<\lambda_*-\lambda_k
$$
the only discriminant points are the earlier collisions
$\lambda_j$, $2\leq j<k$. We work with the single reduced
polynomial family of Remark~\ref{rem:degree-loss}; its common
normalisation zeros have already been cancelled and create no
additional punctures.

There is a monodromy issue here which cannot be settled merely by
saying that every preceding point is locally removable. Remove the
finitely many $\lambda_j$ from the disc and join the base point
$\lambda_*$ to a small circle about each $\lambda_j$ by a path
whose interior lies in the upper half-plane. The fundamental group
of the punctured disc
is generated by the resulting lollipop loops. Along the chosen
access path, Lemma~\ref{lem:branch-labels} continues $z_{n,k}$ to
a simple residual zero at $\lambda_j$, because $j<k$. The
implicit-function theorem gives that residual zero a holomorphic
continuation through a full neighbourhood of $\lambda_j$.
Consequently the small circular part of the generator acts trivially
on this sheet. Continuation back along the access path shows that the
complete lollipop loop fixes the original germ at $\lambda_*$.

Every generator fixes the selected germ, and hence so does every
closed path in the punctured disc. The monodromy theorem, in the form
recorded in Section~\ref{sec:tools}, now gives a single-valued
holomorphic branch there. Its local implicit continuations fill in
all the $\lambda_j$, producing one holomorphic function throughout
the full disc. This proves the first assertion without assuming
simultaneous removability.

We have now exhausted all finite singularities relevant to this
branch: the discriminant zeros preceding $\lambda_k$ are removable
for this sheet, the discriminant zero $\lambda_k$ is not, and every
remaining discriminant zero, degree-loss point, and the branch point
of the scale lies to its left.
The point $\lambda_k$ itself is nonremovable by
Lemma~\ref{lem:k-Puiseux}, because $k\geq2$ and hence $1/k$ is
not a nonnegative integer. The factor
$\sqrt{\lambda+d_n}$ is holomorphic and nonzero there, as proved
above, so it cannot remove this fractional-power term.
All other nonremovable points are real and strictly smaller than
$\lambda_k$. If $a<\lambda_k$, then
$$
 |\lambda_*-a|=\lambda_*-a>
 \lambda_*-\lambda_k=R.
$$
Thus they are strictly farther from $\lambda_*$, which proves both
formulations of the lemma.
\end{proof}

\begin{theorem}\label{thm:negative}
Let $k\geq2$ and $n\geq2k$. Then the derivative of
$$
 G_{n,k}(\lambda)
 :=\sqrt{\lambda+d_n}\,z_{n,k}(\lambda)
$$
is not completely monotone on $(-1/2,\infty)$.
\end{theorem}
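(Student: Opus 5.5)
We argue by contradiction using singularity analysis at the dominant singularity identified in Lemma~\ref{lem:dominant}. Fix any $\lambda_*>-1/2$ and set $R:=\lambda_*-\lambda_k>0$. Suppose that $G_{n,k}'$ were completely monotone on $(-1/2,\infty)$. Then
$$
 (-1)^mG_{n,k}^{(m+1)}(\lambda_*)\geq0,\quad m=0,1,2,\ldots.
$$
Put $A(t):=G_{n,k}(\lambda_*-t)$. Its Taylor coefficients at $t=0$ are
$$
 a_r=[t^r]A(t)=\frac{(-1)^rG_{n,k}^{(r)}(\lambda_*)}{r!}.
$$
Complete monotonicity of the derivative therefore says $a_r\leq0$ for every $r\geq1$. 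We aim to show instead that $a_r>0$ for all large $r$.

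First, we obtain the local expansion of $A$ at $t=R$. Lemma~\ref{lem:k-Puiseux} gives $z_{n,k}(\lambda)=1-\kappa_{n,k}^{1/k}(\lambda-\lambda_k)^{1/k}+O((\lambda-\lambda_k)^{2/k})$. Since $\lambda-\lambda_k=R-t=R(1-t/R)$, and $\sqrt{\lambda+d_n}$ is holomorphic and positive at $\lambda_k$ (shown in Lemma~\ref{lem:dominant}, $d_n+\lambda_k>0$), we get
$$
 A(t)=A_0-B(1-t/R)^{1/k}+O\bigl((1-t/R)^{2/k}\bigr)+O(1-t/R),
 \qquad B=\sqrt{\lambda_k+d_n}\,\kappa_{n,k}^{1/k}R^{1/k}>0.
$$
Here $\alpha=1/k\in(0,1)$ is not an integer because $k\geq2$. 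The second error exponent is $\beta=\min\{2/k,1\}$. If $k=2$, then $\beta=1$ is an integer; in that case we keep the $O(1-t/R)$ term as an analytic (polynomial) part and use the next Puiseux exponent $3/2$ as $\beta$. The analytic terms contribute only finitely many coefficients, so they do not affect the asymptotics.

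Second, we verify the $\Delta$-analyticity needed for the transfer theorem. Lemma~\ref{lem:dominant} gives holomorphy on $|t|<R$, with $t=R$ the only singularity on the circle. The Puiseux expansion of Lemma~\ref{lem:local-splitting} converges on a slit disc about $t=R$. All other singularities of $A$ lie at real $t>R$ strictly farther away, or are nonexistent (Proposition~\ref{prop:disc}, Remark~\ref{rem:degree-loss}). Hence $A$ continues to a disc of radius slightly larger than $R$ slit along $[R,\infty)$, which contains a $\Delta$-domain at $R$.

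Third, we apply the transfer formula from Section~\ref{sec:tools}:
$$
 a_r=-\frac{B}{\Gamma(-1/k)}R^{-r}r^{-1/k-1}\bigl(1+o(1)\bigr).
$$
Since $-1/k\in(-1,0)$, we have $\Gamma(-1/k)<0$, and with $B>0$ this gives $a_r>0$ for all large $r$. This contradicts $a_r\leq0$. In fact, for large $m$ the sign of $(-1)^mG_{n,k}^{(m+1)}(\lambda_*)$ is the strict opposite of the one required.

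The main obstacle is the second step: rigorously ensuring the $\Delta$-domain continuation, including that the selected sheet has no further singularity at $t=R$ beyond the Puiseux one. This is essentially supplied by Lemma~\ref{lem:dominant} together with the convergent parametrisation of Lemma~\ref{lem:local-splitting}. The only delicate bookkeeping is the integer-exponent case $k=2$, handled as above.
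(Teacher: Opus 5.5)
Your proposal is correct and is essentially the paper's proof. It uses the same local term $-B_1(\lambda-\lambda_k)^{1/k}$ from Lemma~\ref{lem:k-Puiseux}, the same $\Delta$-continuation supplied by Lemma~\ref{lem:dominant}, the same retention of the polynomial term $c_2X$ when $k=2$, and the same sign comparison of $\Gamma(-1/k)<0$ with the requirement $[t^r]g\leq0$. The paper does spell out the slit-disc continuation that you compress into one sentence: a positive gap $\eta$ coming from the finite candidate set, a radial cut at $t=R$, and the lollipop monodromy argument showing that the earlier collisions $\lambda_j$, $j<k$, act trivially on the selected sheet.
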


\begin{proof}
We divide the argument into four steps, including the analytic
hypotheses that are sometimes left implicit in applications of
singularity analysis.

\medskip
\noindent Step 1: the first singular term and its sign.
Lemma~\ref{lem:dominant} includes the inequality
$d_n+\lambda_k>0$. Thus the principal square root is holomorphic
and positive in a neighbourhood of $\lambda_k$. Multiplying
\eqref{eq:k-Puiseux} by its Taylor expansion gives
\begin{equation}\label{eq:Gk-Puiseux}
 G_{n,k}(\lambda)
 =B_0-B_1(\lambda-\lambda_k)^{1/k}
 +O\bigl((\lambda-\lambda_k)^{2/k}\bigr),
 \quad B_1>0.
\end{equation}
More explicitly,
$$
 B_0=\sqrt{d_n+\lambda_k},\quad
 B_1=\sqrt{d_n+\lambda_k}\,\kappa_{n,k}^{1/k}.
$$
The minus sign is therefore intrinsic to the $k$-th zero sheet and
cannot be changed by the scale.

\medskip
\noindent Step 2: the global domain needed for transfer.
Fix an arbitrary $\lambda_*>-1/2$, put
$$
 R=\lambda_*-\lambda_k>0,\quad
 g(t)=G_{n,k}(\lambda_*-t).
$$
Lemma~\ref{lem:dominant} says precisely that $g$ is holomorphic in
$|t|<R$, that $t=R$ is its only singularity on the circle
$|t|=R$, and that every other finite singularity is real and has
modulus strictly larger than $R$. There are only finitely many such
candidate points: they come from the finite zero and pole sets of the
rational monic discriminant, the finite zero set of the reduced
leading coefficient, and the single branch point of the scale.
Consequently their distances from the origin have a positive gap
above $R$. Choose $\eta>0$ smaller than that gap, so that the
closed disc $|t|\leq R+\eta$ contains no other nonremovable
singularity. Remove from this disc the radial segment
$[R,R+\eta]$. Its interior is simply connected, and, after the
earlier collision points described in Lemma~\ref{lem:dominant} have
been filled in, it contains no exceptional parameter of the selected
algebraic sheet. More explicitly, the lollipop generators about those
collision points act trivially by the monodromy calculation in that
lemma; the radial cut removes $t=R$, and the choice of $\eta$
excludes the next nonremovable point. The implicit-function theorem
and the monodromy theorem therefore continue the germ of $g$
uniquely throughout this slit disc. In particular, $g$ is
holomorphic in a standard
$\Delta$-domain
$$
 \{\,|t|<R+\eta,\ t\ne R,\ |\arg(t-R)|>\phi\,\}
$$
for some $0<\phi<\pi/2$. This is a rescaled $\Delta$-domain in
the sense of \cite[Definition~VI.1, pp.~389--390]{Flajolet}. It is
the continuation hypothesis in the transfer theorem; it does not
follow merely from the radius of convergence, which is why it is
recorded separately.

\begin{figure}[!htbp]
\centering
\begin{tikzpicture}[
  x=1cm,y=1cm,line cap=round,line join=round,
  outer/.style={
    draw=black!58,
    fill=black!3,
    line width=.82pt
  },
  taylor/.style={
    draw=black!48,
    fill=black!8,
    line width=.72pt,
    dash pattern=on 3pt off 2pt
  },
  cut/.style={draw=black!66,line width=.76pt},
  axis/.style={
    -{Latex[length=1.9mm,width=1.3mm]},
    draw=black!44,
    line width=.62pt
  },
  point/.style={fill=black!88},
  mainlabel/.style={font=\small,text=black!80},
  annotation/.style={font=\scriptsize,text=black!70}
]
  \def\RR{1.58}
  \def\rho{2.52}
  \def\ang{22}
  \draw[outer] (0,0) circle (\rho);
  \draw[taylor] (0,0) circle (\RR);
  \begin{scope}
    \clip (0,0) circle (\rho);
    \fill[white] (\RR,0)
      --($(\RR,0)+(\ang:3)$)
      --($(\RR,0)+(-\ang:3)$)--cycle;
  \end{scope}
  \draw[cut] (\RR,0)--++(\ang:1.78);
  \draw[cut] (\RR,0)--++(-\ang:1.78);
  \draw[axis] (-2.76,0)--(2.86,0);
  \fill[point] (0,0) circle (1.35pt);
  \node[annotation,anchor=north] at (0,-.08) {$0$};
  \fill[point] (\RR,0) circle (1.55pt);
  \node[annotation,anchor=north] at (\RR,-.08) {$R$};
  \draw[black!38,line width=.5pt] (-1.63,1.76)--(-2.63,2.30);
  \node[mainlabel,anchor=east] at (-2.68,2.30)
    {$\Delta(R,\eta,\phi)$};
  \draw[black!38,line width=.5pt] (2.27,.15)--(3.05,1.38);
  \node[annotation,anchor=west,align=left] at (3.10,1.38)
    {excluded sector};
  \draw[black!38,line width=.5pt] (-1.02,-1.08)--(-2.36,-2.18);
  \node[annotation,anchor=east] at (-2.41,-2.18)
    {Taylor disc, $|t|<R$};
  \draw[black!38,line width=.5pt] (1.72,-1.84)--(2.42,-2.38);
  \node[annotation,anchor=west] at (2.47,-2.38)
    {$|t|=R+\eta$};
  \draw[black!50,line width=.58pt]
    (\RR+.52,0) arc[start angle=0,end angle=\ang,radius=.52];
  \node[annotation] at (\RR+.69,.17) {$\phi$};
\end{tikzpicture}
\caption[$\Delta$-domain used for coefficient transfer]
{The $\Delta$-domain used for coefficient transfer. The dashed
circle bounds the Taylor disc centred at $0$, and $R$ is its
unique boundary singularity. The algebraic branch continues across
that circle into $|t|<R+\eta$, except within a sector of opening
$2\phi$ with vertex $R$. The lightly shaded region represents
$\Delta(R,\eta,\phi)$. The construction is schematic and is not
drawn to scale.}
\label{fig:delta-domain}
\end{figure}

\medskip
\noindent Step 3: separation of integer and fractional powers.
The holomorphic function $w(v)$ constructed in the proof of
Lemma~\ref{lem:k-Puiseux}, together with the ordinary Taylor series
of the square-root scale in $\delta=v^k$, gives a convergent
expansion
$$
 g(t)=\sum_{m=0}^{\infty}c_m(1-t/R)^{m/k}
$$
in that dented neighbourhood, after decreasing it if necessary. Thus
convergence here follows directly from the implicit-function theorem.
From
\eqref{eq:Gk-Puiseux},
$$
 c_1=-B_1R^{1/k}<0.
$$
The terms for which $m/k$ is a nonnegative integer are locally
holomorphic at $R$. To make the remainder estimate explicit,
write $X=1-t/R$. If $k\geq3$, convergence of the Puiseux series
gives, uniformly in a smaller $\Delta$-domain,
$$
 g(t)=c_0+c_1X^{1/k}+O(X^{2/k}).
$$
If $k=2$, retain one further term:
$$
 g(t)=c_0+c_1X^{1/2}+c_2X+O(X^{3/2}).
$$
The terms $c_0$, and $c_2X$ in the second case, are polynomials
in $t$ and hence have no Taylor coefficients of sufficiently high
order. In both cases the exponent in the remainder is strictly
larger than $1/k$. The scale estimate of
\cite[Theorem~VI.1, p.~381]{Flajolet}, together with the remainder
transfer estimate in
\cite[Theorem~VI.3, pp.~390--392]{Flajolet}, therefore yields
$$
 [t^r]g(t)
 \sim-B_1R^{1/k}[t^r](1-t/R)^{1/k}.
$$

\medskip
\noindent Step 4: conversion of the coefficient sign into a
derivative obstruction.
Since $0<1/k<1$,
$$
 [t^r](1-t/R)^{1/k}
 \sim
 \frac{R^{-r}r^{-1-1/k}}{\Gamma(-1/k)}<0.
$$
The sign in the last display is worth making explicit. For
$-1<-1/k<0$, the gamma function is negative; this follows at once
from
$$
 \Gamma(1-1/k)=(-1/k)\Gamma(-1/k)
$$
and the positivity of $\Gamma(1-1/k)$. Multiplication by the
negative coefficient $-B_1$ in \eqref{eq:Gk-Puiseux} therefore makes
the Taylor coefficient positive.
Consequently, for all sufficiently large $r$,
$$
 [t^r]g(t)
 =\frac{(-1)^r}{r!}G_{n,k}^{(r)}(\lambda_*)>0.
$$
Complete monotonicity of $G_{n,k}'$ would instead require
$$
 (-1)^{r-1}G_{n,k}^{(r)}(\lambda_*)\geq0.
$$
The signs are opposite. Thus $G_{n,k}'$ is not completely monotone.
\end{proof}

\section{The linear scaling}
\label{sec:linear-counterexample}

We finish with an exact counterexample to the conjectured sign pattern
for the linear scaling. From the explicit polynomial $C_4^\lambda$,
or simply by solving its quadratic equation in $x^2$, the square of
its largest positive zero is
$$
 z_{4,1}(\lambda)^2
 =
 \frac{3\lambda+6+\sqrt{6\lambda^2+21\lambda+18}}
 {2(\lambda^2+5\lambda+6)}.
$$
The plus sign gives the larger of the two positive zeros because the
denominator is positive for $\lambda>-1/2$. Put
$K(\lambda)=\lambda z_{4,1}(\lambda)$, with the positive square root
in the preceding formula. The radicands and denominators in this
expression are nonzero at $\lambda=-1/2$. Hence the same formula
defines a real-analytic continuation of $K$ to a neighbourhood of
that point.

Put $h=\lambda+1/2$. Substitution in the displayed algebraic formula,
followed by the ordinary binomial expansion of its two square roots,
gives
$$
 K\left(-\frac12+h\right)
 =-\frac12+\frac{13}{12}h-\frac{13}{72}h^2
 +\frac{7}{288}h^3+\frac{11}{768}h^4+O(h^5).
$$
For a direct check which does not involve nested radicals, $K$
satisfies
$$
 4(\lambda+2)(\lambda+3)K^4
 -12\lambda^2(\lambda+2)K^2+3\lambda^4=0.
$$
At $(\lambda,K)=(-1/2,-1/2)$, the derivative of the left-hand side
with respect to $K$ is $-3$. The implicit-function theorem
therefore selects a unique analytic branch there, and substitution of
the proposed series in this polynomial identity verifies its
coefficients successively through $h^4$.
In particular,
$$
 K^{(4)}\left(-\frac12\right)
 =4!\,\frac{11}{768}=\frac{11}{32}>0.
$$
By continuity, there is an $\varepsilon>0$ such that
$$
 K^{(4)}(\lambda)>0,\quad
 -\frac12<\lambda<-\frac12+\varepsilon.
$$
The conjectured inequality for $m=3$ has the opposite sign.
Consequently the conjecture for the linear scaling is false, already
for the largest positive zero in degree $4$.

\begin{proof}[Proof of Theorem~\ref{thm:main}]
Part (1) is Corollary~\ref{cor:F-CBF}; part (2) is
Corollary~\ref{cor:H-CBF}; part (3) is
Corollary~\ref{cor:G-CBF}; and part (4) is
Theorem~\ref{thm:negative}. Part (5) is the exact calculation in
Section~\ref{sec:linear-counterexample}. The strictness assertion in parts
\textup{(1)}--\textup{(3)} is contained in the proofs of the three
corollaries.
\end{proof}

\section{Concluding remarks}\label{sec:conclusion}

The proof separates naturally into three mechanisms. Parameter
hyperbolicity prevents a positive zero branch from crossing the
imaginary $x$-axis when the parameter lies in the upper half-plane.
The real discriminant and the local $k$-fold splitting determine
which boundary sheets lie above the real axis. Finally, the
Pick--Bernstein correspondence converts these geometric facts into
derivative inequalities of every order.

The distinction between the largest and the remaining zeros in the
Ahmed--Muldoon--Spigler scaling is already visible at the first singularity relevant to
each branch. The largest branch has a positive square-root term at
$-3/2$, compatible with a positive Bernstein measure. The $k$-th
branch, $k\geq2$, first becomes singular at
$\lambda_k=1/2-k$ with a negative $k$-th-root term. This sign is
exactly what produces the eventual failure of complete monotonicity.

The argument proves more than the real-variable statements
established here: all three positive scaling functions arise from
complete Bernstein functions, after translation of the parameter
where necessary. Their analytic continuations, Nevanlinna
representations, and sector-preserving properties follow as
additional consequences.

\appendix
\numberwithin{equation}{section}
\section{Further consequences of the zero-branch geometry}
\label{app:extensions}

The results in the main text provide a common framework for these
further consequences. We first record a minimum principle under
weaker hypotheses, then allow the square-root scale to vary
independently of the degree. This gives a sharp threshold for every
positive zero other than the largest, together with a sufficient
range for the largest zero. The final two subsections make explicit
the eventual derivative asymptotics and a convexity consequence of
the complete Bernstein representation.

\subsection{A minimum principle with sublinear boundary growth}
\label{app:minimum}

The argument of Section~\ref{sec:boundary} admits the following
extension. It isolates the boundary growth condition from the
algebraic structure of the zero branches and will also apply to the
additional scalings considered below.

\begin{lemma}\label{lem:appendix-minimum}
Let $H$ be holomorphic in $\Hh$, and let $E\subset\R$ be finite.
Suppose that $H$ extends holomorphically through every point of
$\R\setminus E$ and that
$$
 \Ima H(x+\ii0)\geq0,\quad x\in\R\setminus E.
$$
For each $a\in E$, assume the uniform bound
$$
 H(z)=O(|z-a|^{-\alpha_a}),\quad \alpha_a<1,
 \quad z\to a,\quad z\in\Hh.
$$
Finally, suppose that $H(z)\to h\in\R$ uniformly as
$|z|\to\infty$ within $\Hh$. Then $\Ima H\geq0$ in $\Hh$.
If $H$ is nonconstant, then $\Ima H>0$ there.
\end{lemma}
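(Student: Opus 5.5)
The plan is to rerun the exhaustion argument of Lemma~\ref{lem:boundary}, changing only the two places where that proof used extra structure. First, near each $a\in E$ we no longer have a Puiseux expansion, and hence no factor $\sin\theta$ in the lower bound for $\Ima H$ on small semicircles. Second, at infinity we no longer have a Laurent coefficient $c<0$. Accordingly, the aim is only the weak inequality $\Ima H\geq0$, followed by the strong minimum principle to upgrade it when $H$ is nonconstant.

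\emph{Barrier near the exceptional points.} For each $a\in E$ fix an exponent $\beta_a$ with $\max\{\alpha_a,0\}<\beta_a<1$. On $\Hh$, writing $z-a=re^{\ii\theta}$ with $0<\theta<\pi$, put
$$
 b_a(z):=\Rea\bigl(e^{\ii\beta_a\pi/2}(z-a)^{-\beta_a}\bigr)
 =r^{-\beta_a}\cos\bigl(\beta_a(\theta-\pi/2)\bigr).
$$
This function is harmonic in $\Hh$, since it is the real part of a holomorphic function there. Because $|\beta_a(\theta-\pi/2)|\leq\beta_a\pi/2<\pi/2$, it satisfies
$$
 b_a(z)\geq\cos(\beta_a\pi/2)\,r^{-\beta_a}>0
$$
on the whole closed upper half-plane minus $a$, uniformly up to the real axis. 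This replaces the Poisson-type barrier $\Ima(-1/(z-a))$, which degenerates at $\theta=0$ and $\theta=\pi$. On the semicircle $|z-a|=r$, the hypothesis gives $\Ima H\geq-C_ar^{-\alpha_a}$. Since $\beta_a>\alpha_a$, for fixed $\varepsilon>0$ we have
$$
 \varepsilon\cos(\beta_a\pi/2)\,r^{-\beta_a}\geq C_ar^{-\alpha_a}
$$
once $r$ is small enough. If $\alpha_a\leq0$, the function $H$ is bounded near $a$ and the same choice works trivially. The barriers attached to the other points of $E$ are positive, so they only help. Hence
$$
 u_\varepsilon:=\Ima H+\varepsilon\sum_{a\in E}b_a
$$
is nonnegative on every small semicircle. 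On the real intervals of $\R\setminus E$, the boundary values of $\Ima H$ are nonnegative by hypothesis, and each $b_a>0$ there.

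\emph{Outer boundary and limits.} Put $\eta(R):=\sup\{|\Ima H(z)|:|z|=R,\ z\in\Hh\}$. Uniform convergence $H\to h\in\R$ gives $\eta(R)\to0$. Every $b_a$ is positive, so $u_\varepsilon\geq-\eta(R)$ on the large semicircle. Apply the harmonic minimum principle \cite[Chapter~4, Section~6.2, Theorem~21, p.~166]{Ahlfors} to $u_\varepsilon+\eta(R)$ on $D_{R,r}$, exactly as in Lemma~\ref{lem:boundary}; continuity up to the boundary is supplied by the holomorphic extension through $\R\setminus E$. This gives $u_\varepsilon\geq-\eta(R)$ on $D_{R,r}$. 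Fix $z_0\in\Hh$ and take limits in the following order:
\begin{enumerate}[label=\textup{(\arabic*)}]
\item let $r\downarrow0$, with $\varepsilon$ and $R$ fixed;
\item let $R\to\infty$, which sends $\eta(R)\to0$;
\item let $\varepsilon\downarrow0$, which removes the barrier terms since each $b_a(z_0)$ is finite.
\end{enumerate}
The result is $\Ima H(z_0)\geq0$. Strictness follows as before: if $\Ima H$ vanished at an interior point, the strong minimum principle would make $\Ima H\equiv0$, and then the Cauchy--Riemann equations would force $H$ to be constant.

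The main obstacle is the uniformity of the barrier estimate up to the endpoints of the small semicircles, where the raw growth bound provides no angular decay. The choice of the tilted power $b_a$, with exponent strictly between $\alpha_a$ and $1$ and bounded below by $\cos(\beta_a\pi/2)\,r^{-\beta_a}$ at every angle, is designed to resolve exactly that point. The remaining steps are direct transcriptions of the earlier proof.
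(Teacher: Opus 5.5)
Your proof is correct and takes essentially the same route as the paper. Both use the tilted-power barrier $\Rea\bigl(e^{\ii\beta_a\pi/2}(z-a)^{-\beta_a}\bigr)$ with $\max(\alpha_a,0)<\beta_a<1$, the harmonic minimum principle on the dented half-disc, and the strong minimum principle for strictness. The only cosmetic difference is at the outer semicircle: the paper adds the constant $\varepsilon$ to the barrier and chooses $R$ with $\Ima H\geq-\varepsilon$ there, whereas you carry the error $\eta(R)$ and send $R\to\infty$ before $\varepsilon\downarrow0$, which is equivalent.
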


\begin{proof}
For each $a\in E$, choose
$$
 \max(\alpha_a,0)<\beta_a<1
$$
and, with $0<\arg(z-a)<\pi$, set
$$
 \phi_a(z):=\Rea\bigl(e^{\ii\beta_a\pi/2}
                            (z-a)^{-\beta_a}\bigr).
$$
These functions are harmonic in $\Hh$. For
$z=a+re^{\ii\theta}$, $0\leq\theta\leq\pi$, their continuous
upper boundary values satisfy
$$
 \phi_a(z)
 =r^{-\beta_a}\cos\bigl(\beta_a(\pi/2-\theta)\bigr)
 \geq\cos(\beta_a\pi/2)\,r^{-\beta_a}>0.
$$
In particular, the lower bound is uniform in the angle, including
the endpoints of an upper semicircle.

Fix $z_0\in\Hh$ and $\varepsilon>0$, and consider
$$
 u_\varepsilon(z):=\Ima H(z)
       +\varepsilon\left(1+\sum_{a\in E}\phi_a(z)\right).
$$
Choose $R>|z_0|$ sufficiently large that all points of $E$ lie in
$(-R,R)$ and $\Ima H(z)\geq-\varepsilon$ on the upper
semicircle $|z|=R$. This is possible because the limit at infinity
is real and uniform. For each $a\in E$, choose a sufficiently small
radius $r_a>0$ so that the closed discs about the points of $E$ are
pairwise disjoint, lie inside $|z|<R$, and do not contain $z_0$.
They may further be chosen to satisfy
$$
 \varepsilon\cos(\beta_a\pi/2)\,r_a^{-\beta_a}
       \geq C_a r_a^{-\alpha_a},
$$
where $|H(z)|\leq C_a|z-a|^{-\alpha_a}$ near $a$. Indeed,
$\beta_a>\alpha_a$.

Apply the harmonic minimum principle on the bounded domain
$$
 D:=\{z\in\Hh:|z|<R,\ |z-a|>r_a\text{ for every }a\in E\}.
$$
On its real boundary intervals, $u_\varepsilon\geq0$ by the
boundary hypothesis and the positivity of the barriers. On the
small upper semicircle about $a$, the displayed estimate absorbs
the possible negative part of $\Ima H$. On the outer semicircle,
the constant term $\varepsilon$ absorbs that negative part.
All these boundary values are continuous, so the minimum principle
gives $u_\varepsilon(z_0)\geq0$. Letting $\varepsilon\downarrow0$
at the fixed point $z_0$ proves $\Ima H(z_0)\geq0$.

If equality holds at an interior point, the strong minimum principle
gives $\Ima H\equiv0$. The Cauchy--Riemann equations then force
$H$ to be constant. This proves the strict assertion.
\end{proof}

\begin{remark}\label{rem:appendix-minimum-use}
The lemma requires neither a local Puiseux expansion nor a sign for
the first Laurent coefficient at infinity. For the zero branches
in this paper, Remark~\ref{rem:boundary-growth} supplies the finite
boundary growth estimates. The analytic construction based on
\eqref{eq:scaled-polynomial}, with the identification of the Hermite
zero germ that follows it, supplies the uniform complex limit
$$
 \sqrt{\lambda+d}\,z_{n,j}(\lambda)\longrightarrow h_{n,j}>0,
 \quad |\lambda|\to\infty,\quad\lambda\in\Hh,
$$
where $h_{n,j}$ is the $j$-th positive Hermite zero. That argument
applies to every fixed real $d$: the local polynomial and the
implicit-function construction depend on $d$ only as a fixed
coefficient. Uniformity follows from holomorphy of the selected
germ at $t=(\lambda+d)^{-1}=0$; a limit along the positive real
axis alone would not suffice. Thus the lemma can be used in the
extensions below without imposing a sign on the coefficient in
\eqref{eq:Wd-infty}.
\end{remark}

\subsection{A sharp threshold for the nonlargest zeros}
\label{ap:sec-scales}

The scale parameter can be separated from the degree. For
$d\geq1/2$ and $1\leq k\leq\lfloor n/2\rfloor$, set
\begin{equation}\label{ap:eq-free-scale}
 G_{n,k,d}(\lambda):=\sqrt{\lambda+d}\,z_{n,k}(\lambda),
 \quad
 F_{n,k,d}(s):=G_{n,k,d}(s-1/2),\quad s>0.
\end{equation}
The square root is positive for $\lambda>-1/2$. This restriction
on $d$ makes the scale real throughout the original parameter
interval.

\begin{theorem}[The threshold determined by the zero index]
\label{ap:thm-scale-threshold}
Let $k\geq2$, $n\geq2k$, and $d\geq1/2$. The function
$F_{n,k,d}$ is a complete Bernstein function if and only if
$d\leq k-1/2$. This condition is also necessary and sufficient
for its derivative to be completely monotone on $(0,\infty)$.
When this condition holds,
$$
 (-1)^{r-1}F_{n,k,d}^{(r)}(s)>0,
 \quad r\geq1,\quad s>0.
$$
\end{theorem}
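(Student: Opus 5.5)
The proof splits into the two directions of the threshold $d\leq k-1/2$, which is the same as $-d\geq\lambda_k$. Both directions are handled by the machinery used for $G_n$ and for $G_{n,k}$ in Theorem~\ref{thm:negative}. The two assertions about complete monotonicity of the derivative and strict signs then follow from Theorem~\ref{thm:Pick-CBF}. For sufficiency this uses a nonzero representing measure. For necessity it uses the fact that failure of complete monotonicity of $F_{n,k,d}'$ already excludes membership in $\CBF$.

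\emph{Sufficiency ($1/2\leq d\leq k-1/2$).} I would prove $\Ima G_{n,k,d}>0$ on $\Hh$ using Lemma~\ref{lem:appendix-minimum}. This avoids any sign condition at infinity, by Remark~\ref{rem:appendix-minimum-use}. The boundary analysis then mirrors Lemma~\ref{lem:largest-orientation}, now for the $k$-th branch and only on the range $a>-d\geq\lambda_k$.

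For $a>-1/2$ the boundary value is real and positive. On $(\lambda_k,-1/2)$ the branch $z_{n,k}$ stays real and positive. This holds because, by Lemma~\ref{lem:branch-labels}, it passes regularly through every $\lambda_j$ with $j<k$. Between collisions, the propagation argument excludes crossing $x=1$ or the origin. Hence $\Ima G_{n,k,d}(a+\ii0)=0$ for $a>\lambda_k$ with $a+d>0$.

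For $a<-d$, the scale contributes the factor $\ii\sqrt{|a+d|}$. Lemma~\ref{lem:right-half} then gives $\Ima G\geq0$ there, exactly as in Theorem~\ref{thm:G-pick}. The orientation of $z_{n,k}$ past $\lambda_k$ is irrelevant in this range, and that is precisely why $d\leq k-1/2$ suffices.

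At the finitely many exceptional points, including $a=-d$ and possibly $a=\lambda_k=-d$, the growth hypothesis comes from Remark~\ref{rem:boundary-growth}. The translate $F_{n,k,d}$ is nonnegative on $(0,\infty)$, so it belongs to $\CBF$. The measure is nonzero because the first correction in \eqref{eq:Wd-infty} or a square-root term at $-d$ is visible, or alternatively because a constant would contradict nonconstancy. This gives the strict inequalities.

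\emph{Necessity ($d>k-1/2$).} I would rerun Lemma~\ref{lem:dominant} and Theorem~\ref{thm:negative} with $d_n$ replaced by $d$. Now $-d<\lambda_k$, so the scale is holomorphic and positive at $\lambda_k$. The branch point $-d$ and the degree-loss points lie strictly to the left of $\lambda_k$. The earlier collisions are removable for this sheet by the same monodromy argument.

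Lemma~\ref{lem:k-Puiseux} gives $G_{n,k,d}=B_0-B_1(\lambda-\lambda_k)^{1/k}+\cdots$ with $B_1=\sqrt{d+\lambda_k}\,\kappa_{n,k}^{1/k}>0$. The $\Delta$-domain transfer then yields Taylor coefficients at $\lambda_*>-1/2$ of eventually positive sign. This contradicts complete monotonicity of the derivative, and hence also rules out $F_{n,k,d}\in\CBF$.

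\emph{Main obstacle.} The step I expect to be hardest is the sufficiency boundary analysis on $(\lambda_k,-1/2)$. I must show rigorously that $z_{n,k}$ remains real all the way down to $\lambda_k$, and does not merely satisfy $\Ima\geq0$. The reason is that $\sqrt{a+d}$ is real there, so any nonreal value with the wrong orientation would be fatal. Lemma~\ref{lem:branch-labels} already asserts that $z_{n,k}$ is real and regular up to $\lambda_k$ and tends to $1$ from below, so I would invoke it directly. The remaining care is at the endpoint case $d=k-1/2$, where the scale branch point coincides with the collision point. There I would check that the combined exponent, at most $1/k+1/2<1$, satisfies the growth hypothesis.
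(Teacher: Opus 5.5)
Your proposal is correct. The sufficiency half follows the paper's boundary analysis: $z_{n,k}$ is real and positive on $(\lambda_k,\infty)$ by Lemma~\ref{lem:branch-labels}, and for $a<-d$ the factor $\ii\sqrt{|a+d|}$ combines with Lemma~\ref{lem:right-half}. The difference is at infinity. You close with Lemma~\ref{lem:appendix-minimum} and the uniform limit of Remark~\ref{rem:appendix-minimum-use}. The paper instead applies Lemma~\ref{lem:boundary} in its original form, which requires checking $c_{n,k,d}<0$ via $4d\leq4k-2\leq2n-2<2h_{n,k}^{\,2}+2n-1$. Your route avoids that computation. Your use of the nonremovable square-root factor at $-d$ then gives a nonzero representing measure without it.

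For necessity you rerun the full $\Delta$-domain transfer of Theorem~\ref{thm:negative} with $d$ in place of $d_n$. This is valid, since the only property of the scale used there is $-d<\lambda_k$. The paper uses a lighter Taylor-series obstruction instead. If $F_{n,k,d}'$ were completely monotone, the Taylor series of $G_{n,k,d}'$ at $\lambda_*$ would have nonnegative coefficients in $t=\lambda_*-\lambda$. That series converges on $|\lambda-\lambda_*|<R$ by the monodromy argument, so it would force $G_{n,k,d}'\geq0$ on $(\lambda_k,\lambda_*]$. But the Puiseux expansion gives $G_{n,k,d}'\sim-(B_1/k)(\lambda-\lambda_k)^{1/k-1}\to-\infty$. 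That argument needs only the disc of convergence, with no continuation into a $\Delta$-domain and no transfer theorems. Your heavier route buys more: the eventual sign at every large order, which the paper records separately as Proposition~\ref{ap:prop-eventual-sign}.

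One small slip concerns the endpoint $d=k-1/2$. The growth hypothesis there has nothing to do with the positive exponents $1/k$ and $1/2$, and $1/k+1/2<1$ is false for $k=2$ in any case. It holds trivially because $z_{n,k}\to1$ while the scale vanishes, so $G_{n,k,d}$ is bounded at that point. Likewise, on $(-d,-1/2)$ you only need $\Ima z_{n,k}(a+\ii0)\geq0$. Reality is more than enough, and it is needed only down to $-d$, not all the way to $\lambda_k$.
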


\begin{proof}
Suppose first that $1/2\leq d\leq k-1/2$, and put
$\lambda_k=1/2-k$. The proof of
Lemma~\ref{lem:branch-labels} shows that $z_{n,k}$ is positive
and real on $(\lambda_k,\infty)$, with holomorphic continuation
through the preceding collision points. Thus, at every regular
real boundary point $a>-d$, both factors of $G_{n,k,d}$ are
positive and real. For $a<-d$, Lemma~\ref{lem:right-half} and
continuity of the upper boundary value give
$$
 \Ima G_{n,k,d}(a+\ii0)
 =\sqrt{|a+d|}\,\Rea z_{n,k}(a+\ii0)\geq0.
$$
Only finitely many boundary points are exceptional. The algebraic
continuation and the growth bounds of
Remark~\ref{rem:boundary-growth} apply unchanged: multiplying by
the scale creates at most an additional square-root branch point,
and at that point the scale vanishes. In particular, every local
growth exponent remains less than one. This also covers the
endpoint $d=k-1/2$, where the scale vanishes at the collision.

For completeness, the verification at infinity works for every
fixed real $d$. The polynomial construction
\eqref{eq:scaled-polynomial}, the simple Hermite zeros, and the
identity theorem used in Section~\ref{sec:f} give the convergent
real Laurent expansion
\begin{equation}\label{ap:eq-scale-infinity}
 G_{n,k,d}(\lambda)
 =h_{n,k}+\frac{c_{n,k,d}}{\lambda}+O(\lambda^{-2}),
 \quad
 c_{n,k,d}
 =\frac{h_{n,k}\{4d-(2h_{n,k}^{\,2}+2n-1)\}}8,
\end{equation}
where $h_{n,k}$ is the $k$-th positive Hermite zero in decreasing
order; the coefficient is \eqref{eq:Wd-infty}. Here
$$
 4d\leq4k-2\leq2n-2<2h_{n,k}^{\,2}+2n-1,
$$
so $c_{n,k,d}<0$. Lemma~\ref{lem:boundary}, in its original
form, gives $\Ima G_{n,k,d}>0$ in $\Hh$.
The shifted function is holomorphic in
$\C\setminus(-\infty,0]$ and positive on $(0,\infty)$.
Theorem~\ref{thm:Pick-CBF} therefore gives
$F_{n,k,d}\in\CBF$ and the complete monotonicity of its
derivative. Its finite limit and nonzero Laurent coefficient
exclude an affine function. The representing measure is consequently
nonzero, and the differentiated representation gives all the strict
inequalities, as in Corollary~\ref{cor:F-CBF}.

Conversely, suppose $d>k-1/2$. Lemma~\ref{lem:k-Puiseux}
and the holomorphic, positive scale at $\lambda_k$ yield the
convergent local expansion
\begin{equation}\label{ap:eq-free-puiseux}
 G_{n,k,d}(\lambda)
 =B_0-B_1(\lambda-\lambda_k)^{1/k}
   +O\bigl((\lambda-\lambda_k)^{2/k}\bigr),
\end{equation}
where
$$
 B_0=\sqrt{d+\lambda_k},\quad
 B_1=\sqrt{d+\lambda_k}\,\kappa_{n,k}^{1/k}>0.
$$
Fix $\lambda_*>-1/2$ and set $R=\lambda_*-\lambda_k$.
The continuation argument of Lemma~\ref{lem:dominant} applies
with $d$ in place of $d_n$. Indeed, the only condition on the
scale used there is $-d<\lambda_k$. All degree-loss points
also lie to the left of $\lambda_k$, and each earlier collision
is regular for the selected sheet by
Lemma~\ref{lem:branch-labels}. As in the proof of
Lemma~\ref{lem:dominant}, continuation along an upper access
path reaches a simple residual root at every earlier collision;
the corresponding lollipop loop fixes the germ. These loops
generate the fundamental group of the punctured Taylor disc.
Monodromy and the local implicit continuations therefore give a
single holomorphic branch throughout $|\lambda-\lambda_*|<R$.
The nonzero fractional term in \eqref{ap:eq-free-puiseux} makes
$\lambda_k$ its unique singularity on the boundary circle.

There is now a short Taylor-series obstruction. If $F_{n,k,d}'$
were completely monotone, then, for $0<t<R$,
$$
 G_{n,k,d}'(\lambda_*-t)
 =\sum_{m=0}^{\infty}
 \frac{(-1)^mG_{n,k,d}^{(m+1)}(\lambda_*)}{m!}\,t^m
 \geq0.
$$
Convergence follows from the holomorphic continuation just proved;
coefficient positivity would extend the inequality even to the
part of this interval outside the original parameter domain.
On the other hand, differentiation of the convergent Puiseux
expansion gives
$$
 G_{n,k,d}'(\lambda)
 \sim-\frac{B_1}{k}(\lambda-\lambda_k)^{1/k-1}
 \longrightarrow-\infty,\quad \lambda\downarrow\lambda_k.
$$
This is a contradiction. Since every complete Bernstein function
has a completely monotone derivative, it also excludes
$F_{n,k,d}\in\CBF$ and completes the equivalences.
\end{proof}

\subsection{A larger range of scales for the largest zero}
\label{ap:sec-largest-scale}

The orientation of the largest branch supplies a different
sufficient range, extending beyond the threshold for the other
zeros.

\begin{proposition}\label{ap:prop-largest-scale}
If $n\geq4$ and
$$
 \frac12\leq d\leq\left\lceil\frac n2\right\rceil,
$$
then $F_{n,1,d}\in\CBF$. Moreover,
$(-1)^{r-1}F_{n,1,d}^{(r)}(s)>0$ for every $r\geq1$ and
$s>0$.
\end{proposition}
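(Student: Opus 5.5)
We follow the proof of Theorem~\ref{thm:G-pick}, with $d_n$ replaced by an arbitrary $d$ in the stated range, and then argue as in Corollary~\ref{cor:G-CBF}. Put $G_{n,1,d}(\lambda)=\sqrt{\lambda+d}\,Z_n(\lambda)$, with the principal square root. By Lemma~\ref{lem:right-half} and the monodromy discussion at the end of Section~\ref{sec:geometry}, $Z_n$ is holomorphic in $\Hh$ with positive real part. Hence $G_{n,1,d}$ is holomorphic there, and after the shift $s=\lambda+1/2$ it extends to $\C\setminus(-\infty,0]$ and is positive on $(0,\infty)$, because $d\geq1/2$.

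\textbf{Boundary values.} We split the real axis at $-d$. For $a>-d$ we have $a>-\lceil n/2\rceil$, since $d\leq\lceil n/2\rceil$. Lemma~\ref{lem:largest-orientation} then gives $\Ima Z_n(a+\ii0)\geq0$, and multiplication by the positive factor $\sqrt{a+d}$ preserves this sign. For $a<-d$, the regular-boundary limiting argument used in Theorems~\ref{thm:F-pick} and~\ref{thm:H-pick} gives $\Rea Z_n(a+\ii0)\geq0$. Since the upper boundary value of the scale is $\ii\sqrt{|a+d|}$, we get $\Ima G_{n,1,d}(a+\ii0)\geq0$.

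The exceptional set $E$ is finite. It contains the collision points, the degree-loss points and $-d$. In the endpoint case $d=\lceil n/2\rceil$ the scale's branch point coincides with the first degree-loss point. This is harmless: by Remark~\ref{rem:degree-loss}, any divergent zero there grows at most like $|\lambda-a|^{-1/2}$, and the scale vanishes like $|\lambda-a|^{1/2}$. The product is therefore bounded, and the growth exponent stays below $1$, in line with Remark~\ref{rem:boundary-growth}.

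\textbf{Infinity.} We use Lemma~\ref{lem:appendix-minimum} rather than Lemma~\ref{lem:boundary}, because it avoids any sign condition on the Laurent coefficient. Remark~\ref{rem:appendix-minimum-use} supplies the uniform limit $G_{n,1,d}(\lambda)\to h_n$ as $|\lambda|\to\infty$ in $\Hh$, valid for every fixed real $d$. The lemma then gives $\Ima G_{n,1,d}\geq0$ on $\Hh$. Since $G_{n,1,d}$ is nonconstant, we get $\Ima G_{n,1,d}>0$. Theorem~\ref{thm:Pick-CBF} then yields $F_{n,1,d}\in\CBF$.

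As a cross-check, the coefficient in \eqref{eq:Wd-infty} is $h_n\{4d-(2h_n^2+2n-1)\}/8$. Using $h_n^2\geq(n-1)/2$ and $4d\leq 2n+2$, this coefficient is negative once $n\geq4$, so the original Lemma~\ref{lem:boundary} would also apply.

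\textbf{Strictness.} The $1/\lambda$-coefficient above is nonzero, while $F_{n,1,d}$ has a finite limit at infinity. A complete Bernstein function with zero representing measure is affine, and a finite limit at infinity would force it to be constant. So the representing measure is nonzero. Differentiating the representation then gives $(-1)^{r-1}F_{n,1,d}^{(r)}(s)>0$ for all $r\geq1$ and $s>0$, exactly as in Corollary~\ref{cor:F-CBF}.

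\textbf{Main obstacle.} The delicate step is the boundary sign on $(-\lceil n/2\rceil,-d)$ near the collision points, since there the scale is real and positive while $Z_n$ is genuinely complex. This step rests entirely on Lemma~\ref{lem:largest-orientation}, so the only thing to verify carefully is that its range $a>-\lceil n/2\rceil$ covers all of $(-d,\infty)$. The hypothesis $d\leq\lceil n/2\rceil$ is chosen precisely to guarantee this.
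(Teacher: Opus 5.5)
Your proposal is correct and is essentially the paper's proof. It uses the same split of the boundary at $-d$ via Lemma~\ref{lem:largest-orientation} and Lemma~\ref{lem:right-half}, then the Pick characterisation, with strictness from the nonzero $1/\lambda$-coefficient. The only change is that you invoke Lemma~\ref{lem:appendix-minimum}, as Remark~\ref{rem:appendix-minimum-use} permits, where the paper uses Lemma~\ref{lem:boundary} together with the sign of that coefficient.

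There are two small slips. First, combining the uniform bound $4d\leq 2n+2$ with the non-strict $h_n^2\geq(n-1)/2$ gives only $c_{n,1,d}\leq0$ when $n=4$. Your strictness step needs this coefficient nonzero. The fix is either to use $4d\leq 2n$ for even $n$, as the paper does by splitting parity, or to note the strict inequality $h_n^2>(n-1)/2$. Second, in your last paragraph the scale is positive and Lemma~\ref{lem:largest-orientation} is needed on $(-d,-3/2)$, not $(-\lceil n/2\rceil,-d)$. On that latter interval the scale is imaginary and Lemma~\ref{lem:right-half} is what applies.
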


\begin{proof}
For a regular point $a>-d$,
Lemma~\ref{lem:largest-orientation} gives
$$
 \Ima G_{n,1,d}(a+\ii0)
 =\sqrt{a+d}\,\Ima z_{n,1}(a+\ii0)\geq0.
$$
For $a<-d$, Lemma~\ref{lem:right-half} gives the same sign
after multiplication by $\ii\sqrt{|a+d|}$. The local growth
and holomorphic continuation are as in the preceding proof.

Let $h_n=h_{n,1}$. The coefficient in
\eqref{ap:eq-scale-infinity} is strictly negative throughout the
stated range. Indeed, Section~\ref{sec:gpositive} establishes
$h_n^2\geq(n-1)/2$. For even $n\geq4$, this implies
$h_n^2>1/2$, whence
$$
 4d\leq2n<2h_n^2+2n-1.
$$
For odd $n\geq5$, it implies $h_n^2>3/2$, whence
$$
 4d\leq2n+2<2h_n^2+2n-1.
$$
Lemma~\ref{lem:boundary} and Theorem~\ref{thm:Pick-CBF}
now apply. The nonzero Laurent coefficient and the finite limit
again give a nonzero representing measure and strict inequalities.
\end{proof}

This proposition asserts sufficiency only. No optimality of its
upper endpoint is claimed; the exact classification in
Theorem~\ref{ap:thm-scale-threshold} concerns $k\geq2$.

\begin{remark}[The two lowest degrees]\label{ap:rem-low-degrees}
For $n=2,3$, the largest-zero classification is elementary:
$$
 F_{2,1,d}(s)=\frac1{\sqrt2}
       \sqrt{\frac{s+d-1/2}{s+1/2}},\quad
 F_{3,1,d}(s)=\sqrt{\frac32}
       \sqrt{\frac{s+d-1/2}{s+3/2}}.
$$
Consequently $F_{2,1,d}\in\CBF$ exactly when $1/2\leq d\leq1$,
and $F_{3,1,d}\in\CBF$ exactly when $1/2\leq d\leq2$.
Indeed, $(s+a)/(s+b)$ maps $\Hh$ into itself for
$0\leq a<b$, and so does its principal square root; positivity
on $(0,\infty)$ gives the complete Bernstein property.
For $a>b$ the square root is decreasing on $(0,\infty)$,
which excludes that property. At the upper endpoints the two
functions are constant, so strict derivative inequalities do not
hold there.
\end{remark}

\subsection{The eventual sign at every derivative order}
\label{ap:sec-eventual-sign}

The transfer argument of Section~\ref{sec:negative} gives more
information than the Taylor-series contradiction: at any fixed
interior parameter, every sufficiently high derivative has the
opposite sign to complete monotonicity.

\begin{proposition}\label{ap:prop-eventual-sign}
Let $k\geq2$, $n\geq2k$, $d>k-1/2$, and
$\lambda_*>-1/2$. With
$$
 R=\lambda_*-\lambda_k,\quad
 A_{n,k,d,\lambda_*}
 =\frac{\sqrt{d+\lambda_k}\,\kappa_{n,k}^{1/k}R^{1/k}}
 {|\Gamma(-1/k)|}>0,
$$
one has
\begin{equation}\label{ap:eq-derivative-asymptotic}
 (-1)^{r-1}G_{n,k,d}^{(r)}(\lambda_*)
 \sim-A_{n,k,d,\lambda_*}
       \frac{r!}{R^r r^{1+1/k}},\quad r\longrightarrow\infty.
\end{equation}
In particular, $G_{n,k,d}'$ is not completely monotone on any
nonempty open subinterval of $(-1/2,\infty)$.
\end{proposition}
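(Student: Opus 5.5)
The plan is to repeat Steps 2--4 of the proof of Theorem~\ref{thm:negative}, with $d$ in place of $d_n$, and to keep the constants explicit. Put $g(t)=G_{n,k,d}(\lambda_*-t)$ and $R=\lambda_*-\lambda_k$. Then
$$
 [t^r]g(t)=\frac{(-1)^r}{r!}G_{n,k,d}^{(r)}(\lambda_*),
$$
so \eqref{ap:eq-derivative-asymptotic} is equivalent to
$$
 [t^r]g(t)\sim A_{n,k,d,\lambda_*}R^{-r}r^{-1-1/k}.
$$
The hypothesis $d>k-1/2$ gives $-d<\lambda_k$, which is the only property of the scale that Lemma~\ref{lem:dominant} uses. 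The converse part of the proof of Theorem~\ref{ap:thm-scale-threshold} already records that this lemma carries over. So $g$ is holomorphic in $|t|<R$, $t=R$ is its unique singularity on the circle $|t|=R$, and the next candidate singularity lies strictly farther out. The $\Delta$-domain construction of Step~2 then applies verbatim: fill in the earlier collisions by the lollipop monodromy argument, cut along $[R,R+\eta]$, and continue by the monodromy theorem.

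Next I expand $g$ at $t=R$. Combining \eqref{ap:eq-free-puiseux} with the convergent parametrisation of Lemma~\ref{lem:local-splitting} and the Taylor series of the positive scale at $\lambda_k$ gives, with $X=1-t/R$ and $\lambda-\lambda_k=RX$,
$$
 g=c_0+c_1X^{1/k}+O(X^{2/k}),\qquad c_1=-B_1R^{1/k}.
$$
When $k=2$ I retain the holomorphic term $c_2X$, exactly as in Step~3. The transfer theorems then give
$$
 [t^r]g\sim c_1\,\frac{R^{-r}r^{-1/k-1}}{\Gamma(-1/k)}.
$$
The sign check is the one in Step~4. Since $\Gamma(-1/k)<0$ and $c_1<0$, this equals $B_1R^{1/k}R^{-r}r^{-1-1/k}/|\Gamma(-1/k)|$. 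Substituting $B_1=\sqrt{d+\lambda_k}\,\kappa_{n,k}^{1/k}$ identifies the constant as $A_{n,k,d,\lambda_*}$. Multiplying by $(-1)^{r-1}r!$ gives exactly \eqref{ap:eq-derivative-asymptotic}.

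For the final assertion, take any nonempty open subinterval and pick a point $\lambda_*>-1/2$ in it. By \eqref{ap:eq-derivative-asymptotic}, $(-1)^{r-1}G_{n,k,d}^{(r)}(\lambda_*)<0$ for all large $r$. Complete monotonicity of $G_{n,k,d}'$ on that subinterval would require these quantities to be nonnegative at every interior point, so it fails.

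The main obstacle is the transfer step, where the remainder must be strictly subdominant. The remainder term $O(X^{2/k})$, together with the integer-power terms, must have exponent strictly larger than $1/k$, and must also avoid the nonnegative integers or else be a polynomial in $t$. For $k\ge3$, $2/k$ is not an integer, so the remainder is fine. For $k=2$ I split off the polynomial term $c_2X$, which leaves $O(X^{3/2})$. Beyond that, I only need to make sure that the $\Delta$-analyticity of Step~2 is genuinely uniform in the choice of $d$ satisfying $-d<\lambda_k$; that uniformity is what the earlier discussion already provides.
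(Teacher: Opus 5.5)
Your proposal is correct and follows the paper's proof essentially step for step: the same $\Delta$-domain continuation inherited from the converse part of Theorem~\ref{ap:thm-scale-threshold}, the same Puiseux expansion at $t=R$ with $c_1=-B_1R^{1/k}$ (retaining $c_2X$ when $k=2$), the same transfer and sign check via $\Gamma(-1/k)<0$, and the same pointwise argument for the final assertion. Your concern about uniformity in $d$ is unnecessary: $d$ is fixed, so you only need $\Delta$-analyticity for that single value, and the cited construction supplies it.
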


\begin{proof}
Put $g(t)=G_{n,k,d}(\lambda_*-t)$. The continuation established
in the proof of Theorem~\ref{ap:thm-scale-threshold} makes $R$
the unique singularity on its circle of convergence. After the
earlier collision points have been filled in, all candidate
singularities other than $R$ form a finite real set lying farther
from the origin.
Choose $\eta>0$ so that no further candidate enters
$|t|<R+\eta$. The same monodromy argument, with a radial cut
from $R$, continues $g$ to a $\Delta$-domain
$$
 \{\,|t|<R+\eta,\ t\neq R,\ |\arg(t-R)|>\phi\,\},
 \quad 0<\phi<\pi/2.
$$
Thus the continuation hypothesis for coefficient transfer holds,
in addition to the radius-of-convergence assertion.

Writing $X=1-t/R$, the convergent Puiseux series gives, when
$k\geq3$,
$$
 g(t)=c_0-B_1R^{1/k}X^{1/k}+O(X^{2/k}).
$$
For $k=2$, retain the integer-power term:
$$
 g(t)=c_0-B_1R^{1/2}X^{1/2}+c_2X+O(X^{3/2}).
$$
The displayed integer powers are polynomials in $t$ and have no
coefficients of sufficiently high order. The transfer estimates
used in Section~\ref{sec:negative} therefore imply
$$
 [t^r]g(t)
 \sim-\frac{B_1R^{1/k}}{\Gamma(-1/k)}
           R^{-r}r^{-1-1/k}.
$$
Since $\Gamma(-1/k)<0$ and
$[t^r]g(t)=(-1)^rG_{n,k,d}^{(r)}(\lambda_*)/r!$, this is
\eqref{ap:eq-derivative-asymptotic}. Its right-hand side is
strictly negative. Every nonempty open subinterval contains a
point $\lambda_*$ at which all sufficiently high orders violate
the inequalities required of a completely monotone derivative.
\end{proof}

\subsection{Convexity for every positive zero}
\label{app:convexity}

The complete Bernstein property in Theorem~\ref{thm:main}(2)
also gives a convexity consequence uniform in the zero index.
Dimitrov~\cite[Theorem~1]{Dimitrov2003} proved convexity of
$\lambda^{3/2}z_{n,1}(\lambda)$ on $[0,\infty)$ for the largest
zero. The following deduction extends that conclusion to every
positive zero within the representation framework of this paper.

\begin{corollary}\label{cor:appendix-convexity}
For every $n\geq2$ and
$1\leq j\leq\lfloor n/2\rfloor$, the function
$$
 V_{n,j}(\lambda):=\lambda^{3/2}z_{n,j}(\lambda),
 \quad\lambda>0,
$$
satisfies $V_{n,j}''(\lambda)>0$ on $(0,\infty)$. Its continuous
extension defined by $V_{n,j}(0)=0$ is convex on $[0,\infty)$.
\end{corollary}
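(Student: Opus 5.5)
Write $V_{n,j}=\lambda H_{n,j}$, where $H_{n,j}(\lambda)=\sqrt{\lambda}\,z_{n,j}(\lambda)$ is the complete Bernstein function of Corollary~\ref{cor:H-CBF}. Then
$$
 V_{n,j}''=2H_{n,j}'+\lambda H_{n,j}''.
$$
The sign of this combination follows from the representation in Theorem~\ref{thm:Pick-CBF}(iii). With $f(s)=a+bs+\int s(s+t)^{-1}\nu(\dd t)$, the differentiated formulas recorded after that theorem give
$$
 f'(s)=b+\int\frac{t}{(s+t)^2}\,\nu(\dd t),\qquad
 f''(s)=-2\int\frac{t}{(s+t)^3}\,\nu(\dd t).
$$
Hence
$$
 2f'(s)+sf''(s)
 =2b+2\int\frac{t(s+t)-st}{(s+t)^3}\,\nu(\dd t)
 =2b+2\int\frac{t^2}{(s+t)^3}\,\nu(\dd t).
$$
This is nonnegative. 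It is strictly positive as soon as $\nu\neq0$, because then the integrand is positive on a set of positive measure. Corollary~\ref{cor:H-CBF} shows that the measure for $H_{n,j}$ is nonzero, via the nonremovable $\lambda^{1/2}$ term at the origin. Therefore $V_{n,j}''>0$ on $(0,\infty)$. Dominated differentiation under the integral on compact subsets of $(0,\infty)$ is already justified in Section~\ref{sec:tools}.

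For the closed interval, note that $H_{n,j}(\lambda)=\zeta_{n,j}\lambda^{1/2}+O(\lambda^{3/2})$. Hence $V_{n,j}(\lambda)=O(\lambda^{3/2})\to0$, so the extension $V_{n,j}(0)=0$ is continuous. A continuous function on $[0,\infty)$ that is convex on $(0,\infty)$ is convex on $[0,\infty)$. To see this, take the convexity inequality for points in $(0,\infty)$ and let the left point tend to $0$.

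The only step needing care is the strictness, which rests on the nonvanishing of $\nu$. It is already supplied by Corollary~\ref{cor:H-CBF}. Even without it, $V_{n,j}''(\lambda)\sim\tfrac34\zeta_{n,j}\lambda^{-1/2}>0$ near $0$, which rules out $\nu=0$ with $b=0$, since that case would force $H_{n,j}$ to be constant.
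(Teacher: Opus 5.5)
Your proposal is correct and is essentially the paper's proof. Both write $V_{n,j}=\lambda H_{n,j}$ and use the identity $2f'(s)+sf''(s)=2b+2\int t^2(s+t)^{-3}\,\nu(\dd t)$ for $f=H_{n,j}$, get strictness from the nonzero representing measure in Corollary~\ref{cor:H-CBF}, and handle the endpoint through the Chebyshev asymptotics $V_{n,j}(\lambda)=O(\lambda^{3/2})$ followed by a limiting convexity inequality.
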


\begin{proof}
Let $f\in\CBF$, with the representation in
Theorem~\ref{thm:Pick-CBF}. Differentiating $sf(s)$ twice gives
\begin{equation}\label{eq:appendix-convexity}
 \frac{\dd^2}{\dd s^2}\bigl(sf(s)\bigr)
 =2f'(s)+sf''(s)
 =2b+\int_{(0,\infty)}\frac{2t^2}{(s+t)^3}\,\nu(\dd t).
\end{equation}
The differentiation is justified by the bounds established in
Section~\ref{sec:tools}. Equivalently, on every compact subinterval
of $(0,\infty)$ the kernel on the right is bounded by a constant
multiple of $(1+t)^{-1}$, which is integrable against $\nu$.
The expression is nonnegative, and is strictly positive if
$\nu\neq0$.

Apply this identity with
$$
 f(\lambda)=\sqrt{\lambda}\,z_{n,j}(\lambda).
$$
Theorem~\ref{thm:main}(2) gives $f\in\CBF$, with a nonzero
representing measure, so \eqref{eq:appendix-convexity} yields the
strict inequality. Finally, the reduced Chebyshev limit in
Section~\ref{sec:h} gives
$$
 V_{n,j}(\lambda)
 =\zeta_{n,j}\lambda^{3/2}+O(\lambda^{5/2}),
 \quad
 \zeta_{n,j}=\cos\frac{(2j-1)\pi}{2n}>0,
 \quad\lambda\downarrow0.
$$
Hence the stated extension is continuous at zero. The convexity
inequality with an endpoint at zero follows by letting a positive
endpoint tend to zero in the interior convexity inequality.
\end{proof}


\section*{Declaration on the use of generative AI}

All generalizations and other additional mathematical developments
in Appendix~\ref{app:extensions}, including their arguments and
proofs, were developed and written entirely online by ChatGPT from
the original manuscript, without any human intervention in their
mathematical development, during a video call with Renato
\'Alvarez-Nodarse on 15 September 2026. The model used was OpenAI's
\texttt{gpt-6-astra}.

ChatGPT was not used in any way to generate the mathematical
arguments in Sections~1--\ref{sec:conclusion}, including the proof
of Theorem~\ref{thm:main}. At the author's request, the mathematical
statements and proofs in these sections retain their original form
to distinguish the original contribution clearly from the
generalizations and all other additional mathematical material,
which are collected exclusively in Appendix~\ref{app:extensions}.

ChatGPT independently carried out the entire editorial integration and
typesetting of the Appendix and was explicitly instructed to preserve
the style of the original article. The mathematical content of the
Appendix was reviewed for correctness without altering a single formula.
The entire process of mathematical verification, editorial integration,
and typesetting took approximately 14 minutes, measured from the request
to revise the source file, with the mathematical proof itself having an
estimated prorated cost of approximately \texteuro 0.46 under the
subscription used.

\section*{Acknowledgements}

K. Castillo acknowledges financial support from the Centre for
Mathematics of the University of Coimbra (CMUC), funded by the
Portuguese Foundation for Science and Technology (FCT), under the
projects UID/00324/2025 and UID/PRR/00324/2025. K. Castillo also
acknowledges financial support from the FCT\@. The grant number is
\mbox{2022.00143.CEECIND/CP1714/CT0002}.

\end{document}